
\documentclass{birkjour}
\usepackage{tikz-cd}
\usepackage{subfigure}
\usepackage{verbatim}
%
%
%
 \newtheorem{theorem}{Theorem}[section]
 \newtheorem{corollary}[theorem]{Corollary}
 \newtheorem{lemma}[theorem]{Lemma}
 \newtheorem{proposition}[theorem]{Proposition}
 \theoremstyle{definition}
 \newtheorem{definition}[theorem]{Definition}
 \newtheorem{notation}[theorem]{Notation}
 \theoremstyle{remark}
 \newtheorem{remark}[theorem]{Remark}
 \newtheorem{example}{Example}
 \numberwithin{equation}{section}

 \newcommand{\R}{\mathbb{R}}
 \newcommand{\C}{\mathbb{C}}
 \newcommand{\Z}{\mathbb{Z}}

 \newcommand{\EE}{\mathcal{E}}
 \newcommand{\RR}{\mathcal{R}}
  \newcommand{\M}{\mathcal{M}}
 \newcommand{\E}{\mathbf{E}}

 \newcommand{\fsep}{\hspace*{\fill}}

 \newcommand{\w}{\textcolor{black}}
 \newcommand{\p}{\textcolor{black}}
 \newcommand{\m}{\textcolor{black}}

\begin{document}

%
%
%

\title[Singular Improper Affine Spheres ]
 {Singular Improper Affine Spheres \\ from a given Lagrangian Submanifold}

\author[M. Craizer]{Marcos Craizer}

\address{%
Departamento de Matem\'{a}tica - PUC-Rio\br
Rio de Janeiro, RJ, 22453-900, 
Brazil}

\email{craizer@puc-rio.br}
\thanks{The first author thanks CNPq and the third author thanks Fapesp, for financial support during the preparation of this manuscript. The research of the second author was supported by NCN grant no. DEC-2013/11/B/ST1/03080 }

\author[W. Domitrz]{Wojciech Domitrz}
\address{Faculty of Mathematics and Information Science \br
Warsaw University of Technology \br
ul. Koszykowa 75, 00-662 Warszawa, 
Poland}
\email{domitrz@mini.pw.edu.pl}

\author[P. de M. Rios]{Pedro de M. Rios}
\address{Departamento de Matem\'atica - ICMC \br
Universidade de S\~ao Paulo \br
S\~ao Carlos, SP, 13560-970, 
Brazil}
\email{prios@icmc.usp.br}


\subjclass{53A15, 53D12, 58K40, 58K70}

\keywords{Affine differential geometry: parabolic affine spheres; Geometric analysis: solutions of Monge-Amp\`ere equation; Symplectic geometry: special K\"ahler manifolds, Lagrangian submanifolds; Singularity theory: Lagrangian/Legendrian singularities, symmetric singularities.}

\date{august 07, 2015}

\begin{abstract}
Given a Lagrangian submanifold $L$ of the affine symplectic $2n$-space, one can canonically and uniquely  define a center-chord and a special improper affine sphere of dimension $2n$, both of whose sets of singularities contain $L$. Although these improper affine spheres (IAS) always present other singularities away from $L$ (the off-shell singularities studied in \cite{CDR}), they may also present singularities other than $L$ which are arbitrarily close to $L$, the so called singularities ``on shell''. These on-shell singularities  possess a hidden $\mathbb Z_2$ symmetry that is absent from the off-shell singularities.  In this paper, we study these canonical IAS obtained from $L$ and their on-shell singularities, in arbitrary even dimensions, and classify all stable Lagrangian/Legendrian singularities on shell that may occur for these IAS when $L$ is a curve or a Lagrangian surface.
\end{abstract}

\maketitle

\section{Introduction}


An improper affine sphere (IAS) is a hypersurface whose affine Blaschke normal vectors are all parallel. They are given as the graph of a function $F:\R^m\to\R$
satisfying the Monge-Amp\`ere equation
\begin{equation}\label{eq:MA}
\det(D^2 F)=\pm 1.
\end{equation}
In the lowest dimensional cases, surfaces in $\R^3$, there are two classes of improper affine spheres: the convex ones, satisfying the equation $\det(D^2 F)=1$, and the non-convex ones, satisfying the equation $\det(D^2 F)=-1$.

For two planar curves $\alpha^{+}:U\subset\R\to\R^{2}$ and $\alpha^{-}:V\subset\R\to\R^2$, denote by
$x(u,v)=\frac{1}{2}(\alpha^{+}(u)+\alpha^{-}(v))$ the mid-point of $(\alpha^+(u),\alpha^-(v))$
and denote by $f(u,v)$ the area of the region bounded by the chord connecting $\alpha^{+}(u)$ and $\alpha^{-}(v)$, plus a chosen fixed chord $\xi_0$ connecting arbitrary points $\alpha^{+}(u_0)$ and $\alpha^{-}(v_0)$, plus the arcs of $\alpha^{+}$ and $\alpha^{-}$ between these two chords ($f\equiv f_{\xi_0}$ depends on the choice of $\xi_0$, of course, but for another choice $\xi_0'$, $f_{\xi_0'}-f_{\xi_0}=\mbox{constant}$).
The map $(u,v)\mapsto (x(u,v),f(u,v))$ in a non-convex IAS and, conversely, any $2$-dimensional non-convex IAS is locally as above, for certain curves $\alpha^{+}$ and $\alpha^{-}$. Since the mid-chord $y(u,v)=\frac{1}{2}(\alpha^{+}(u)-\alpha^{-}(v))$ is the symplectic gradient of $f$ \footnote{More precisely, $Y(x)=y(u,v)$ is the Hamiltonian vector field of $F(x)=f(u,v)$, for $x=x(u,v)$ the center as above, with respect to the canonical symplectic form on $\mathbb R^2\ni x$.}, this type of IAS \p{was called {\it center-chord} in \cite{CDR}, where this construction was generalized  to arbitrary even dimensions substituting the pair of planar curves by a pair $(L^{+},L^{-})$ of Lagrangian submanifolds of $\R^{2n}$. But in fact, this generalization was first presented in \cite{Cortes06}, where  IAS of this type were referred to as special para-K\"ahler manifolds.}

 The center-chord IAS is independent of parameterizations of the Lagrangian submanifolds and the singular set of a center-chord IAS is given by the pairs $(u,v)$ such that $T_uL^{+}$ and $T_vL^{-}$ are not transversal. The image of the singular set by the map $x(u,v)$ is the Wigner caustic of the pair $(L^{+},L^{-})$ and will be denoted $\E_{cc}(L^{+},L^{-})$, while the image of the singular set by the map $(x(u,v),f(u,v))$ will be denote $\tilde\E_{cc}(L^{+},L^{-})$.

For a holomorphic function $H:\C\to\C$, $H(z)=P(s,t)+iQ(s,t)$, where $z=s+it$, let us denote $x(s,t)=(s,\frac{\partial Q}{\partial t})$, $y(s,t)=(t,\frac{\partial Q}{\partial s})$, and also $f(s,t)=Q(s,t)-t\frac{\partial Q}{\partial t}$. Then, the map $(s,t)\mapsto (x(s,t),f(s,t))$ is a convex IAS whose symplectic gradient is $y$ \footnote{$Y(x)=y(s,t)$ is the Hamiltonian vector field of $F(x)=f(s,t)$, for $x=x(s,t)$ as above.}. Conversely, any $2$-dimensional convex IAS is as above, for a certain holomorphic function $H$. This construction was generalized to arbitrary even dimensions in \cite{Cortes00}, by considering holomorphic maps $H:\C^n\to\C$, \p{and IAS of this type were shown to be special K\"ahler manifolds in the sense of \cite{Freed}. Thus, this type of IAS was called {\it special} in \cite{Cortes00}.}

The singular set of a special IAS is given by the pairs $(s,t)$ such that $\frac{\partial^2Q}{\partial t^2}$ is singular. The image of the singular set by the map $x(s,t)$ is a caustic and will be denoted $\E_{sp}(\mathbb L)$, while the image of the singular set by the map $(x(s,t),f(s,t))$ will be denoted $\tilde\E_{sp}(\mathbb L)$, where $\mathbb L$ is the graph of $dH$ in $\mathbb C^n\times\mathbb C^n$.

For both the center-chord and the special IAS, the function $F:\R^{2n}\to \R$, given by $F(x)=f(u,v)$, satisfies the Monge-Amp\`ere equation (\ref{eq:MA}), but generically each such solution $F$ of the Monge-Amp\`ere equation has singularities, as studied in \cite{CDR}. On the other hand, what was not explored in \cite{CDR} and is the object of the present paper is that, in various instances, a subset of the singular set of $F$ is a Lagrangian submanifold $L\subset\R^{2n}$.

In fact, by taking the same Lagrangian submanifold, $L^+=L^-=L$, we obtain an interesting subclass of the center-chord improper affine spheres.
In this case, $L$ is contained in the {\it Wigner caustic} $\E_{cc}(L)$ of $L$. The study of the Wigner caustic of $L$ is of some interest in physics (\cite{DMR},\cite{DR}), and this subclass of the center-chord IAS is also of interest in computational vision  (\cite{Craizer08},\cite{Giblin04}).

In this paper we introduce the corresponding subclass for special IAS. This subclass consists of special IAS defined by holomorphic maps $H:\C^n\to\C$ that takes the real space $\R^n$ into the real line $\R$, which implies that the real function $Q$, above, is an odd function of $t$.
Denote by $L$ the image of the real space $\R^n$ by the map $x$, which is a Lagrangian submanifold of $\R^{2n}$. Since the holomorphic map
$H$ can be recovered from $L$, we shall denote by $\E_{sp}(L)$ the corresponding caustic of the special IAS. As in the center-chord case, $L$ is contained in
$\E_{sp}(L)$. In \cite{Craizer08}, this type of IAS was considered for $n=1$.

Generically, the sets $\E_{cc}(L)$ and $\E_{sp}(L)$ contain $L$ and other points away from $L$, but they
may also contain more points than just $L$ in any neighborhood of $L$, the so-called {\it on-shell} part of $\E_{cc}(L)$ and $\E_{sp}(L)$, denoted by $\E_{cc}^s(L)$ and $\E_{sp}^s(L)$, respectively. In \cite{CDR}, singularities of $\E_{cc}(L)\setminus \E_{cc}^s(L)$ and $\E_{sp}(L)\setminus\E_{sp}^s(L)$, also called  off-shell singularities, were studied and classified. In this paper, we shall study and classify the singularities of $\E_{cc}^s(L)$ and $\E_{sp}^s(L)$, and of their Legendrian analogues $\tilde{\E}_{cc}^s(L)$ and $\tilde{\E}_{sp}^s(L)$.

Since both $\E_{cc}^s(L)$ and $\E_{sp}^s(L)$ are \p{sets of critical values} of Lagrangian maps (respectively Legendrian maps for $\tilde{\E}_{cc}^s(L)$ and $\tilde{\E}_{sp}^s(L)$), it is natural
to study them in this context \p{using generating functions and generating families.}
\p{But the study of singularities via generating families is of a local nature, so we shall actually study germs of singularities. In this setting, the on-shell singularity-germs of these IAS are described by the following theorem, which is detailed in section \ref{genfamilydescription} below and generalize \cite[Theorem 2.11]{DMR}.}

For center-chord IAS, \p{if $L$ is locally generated by function $S=S(q)$ via}
\begin{equation}
L=\{(q,p)\in\R^{2n}|\ p=\m{d}S\} \ , \nonumber
\end{equation}
then a generating \p{family} for $\E_{cc}^s(L)$ in a neighborhood of $L$ is given by
\begin{equation}\label{oddGcc}
G_{cc}^s(\beta,q,p)=\frac{1}{2}\left( S(q+\beta)-S(q-\beta) \right)-p\cdot\beta \ .
\end{equation}

\p{In the special case, for the function $Q=Q(s,t)$ introduced above, with
	\begin{equation}
	L=\big\{(q,p)=\Big(s,\frac{\partial Q}{\partial t}(s,0)\Big), s\in\mathbb R^n	\big\} \nonumber
	\end{equation}
locally, a generating family for $\E_{sp}^s(L)$  in a neighborhood of $L$ is given by
\begin{equation}\label{oddGsp}
G_{sp}^s(\beta,q,p)=Q(q,\beta)-p\cdot\beta \ .
\end{equation}
\begin{theorem}\label{precise}
The germ at $0\in L$ of $\E_{cc}^s(L)$, resp. $\E_{sp}^s(L)$, is the set of critical values the Lagrangian map-germ $\pi|_{\mathcal{L}}:\mathcal L\to\mathbb R^{2n}$, where $\mathcal L$ is the Lagrangian submanifold-germ of $(T\mathbb R^{2n},\Omega)$ determined by the generating family $G_{cc}^s$, resp. $G_{sp}^s$, as above (cf. (\ref{lag1}) below). Similarly, the germ at  $0\in L\times\{0\}$ of $\tilde{\E}_{cc}^s(L)$, resp. $\tilde{\E}_{sp}^s(L)$, is the set of critical values of the Legendrian map-germ $\tilde\pi|_{\tilde{\mathcal{L}}}:\tilde{\mathcal L}\to\mathbb R^{2n}\times\mathbb R$, where $\tilde{\mathcal L}$ is the Legendrian submanifold-germ of $(T\mathbb R^{2n}\times\mathbb R,\theta)$ determined by the generating family $G_{cc}^s$, resp. $G_{sp}^s$, as above (cf. (\ref{leg1}) below).
\end{theorem}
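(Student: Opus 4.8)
The plan is to verify the theorem by a direct computation, reducing each of the four assertions to the standard description of the Lagrangian (resp. Legendrian) submanifold associated to a generating family, and then matching the resulting critical-value sets with the intrinsic definitions of $\E_{cc}^s(L)$, $\E_{sp}^s(L)$ and their tilded analogues. First I would recall the general formalism: given a generating family $G=G(\beta,x)$ with parameter $\beta\in\mathbb R^k$ and base variable $x\in\mathbb R^{2n}$, the associated Lagrangian submanifold of $T^*\mathbb R^{2n}\cong(T\mathbb R^{2n},\Omega)$ is
\begin{equation}
\mathcal L=\Big\{(x,\partial_x G(\beta,x))\ :\ \partial_\beta G(\beta,x)=0\Big\}, \nonumber
\end{equation}
and the set of critical values of $\pi|_{\mathcal L}$ is exactly the caustic $\{x\ :\ \exists\,\beta,\ \partial_\beta G=0,\ \det\partial^2_{\beta\beta}G=0\}$. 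The Legendrian lift $\tilde{\mathcal L}\subset(T\mathbb R^{2n}\times\mathbb R,\theta)$ adjoins the value coordinate $z=G(\beta,x)$ on the locus $\partial_\beta G=0$, and its set of critical values of $\tilde\pi$ is the big wavefront. So the content of the theorem is that the explicitly written $G_{cc}^s$ and $G_{sp}^s$ are generating families whose caustics/wavefronts coincide with $\E^s$ and $\tilde{\E}^s$.

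Next I would treat the center-chord case. With $L=\{(q,p):p=dS(q)\}$ and $L^+=L^-=L$, parametrize $L^+$ by $u\mapsto(u,dS(u))$ and $L^-$ by $v\mapsto(v,dS(v))$; the center is $x=\tfrac12(u+v)$ and the half-chord is $y=\tfrac12(dS(u)-dS(v))$. Setting $q=\tfrac12(u+v)$ and $\beta=\tfrac12(u-v)$, so $u=q+\beta$, $v=q-\beta$, one computes $\partial_\beta G_{cc}^s=\tfrac12(dS(q+\beta)+dS(q-\beta))-p$ and $\partial_q G_{cc}^s=\tfrac12(dS(q+\beta)-dS(q-\beta))$. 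Then $\partial_\beta G_{cc}^s=0$ forces $p=\tfrac12(dS(q+\beta)+dS(q-\beta))$, which is exactly the condition that $(q,p)$ be the center of the chord through $(q\pm\beta,dS(q\pm\beta))$, i.e.\ a point of the Wigner caustic construction with half-chord $y=\partial_q G_{cc}^s$; this matches the footnote characterizations of $Y$ as the Hamiltonian field of $F$. The Hessian $\partial^2_{\beta\beta}G_{cc}^s=\tfrac12(D^2S(q+\beta)+D^2S(q-\beta))$ is singular precisely when $T_uL^+$ and $T_vL^-$ fail to be transversal, recovering the stated description of the singular set of the center-chord IAS. Restricting to a neighborhood of $\beta=0$ (where $q+\beta,q-\beta$ are both near the base point and $u,v$ traverse $L$ in a way that produces points of $\E_{cc}(L)$ arbitrarily close to $L$) isolates exactly the on-shell part $\E_{cc}^s(L)$, and at $\beta=0$ one has $G_{cc}^s=0$, $\partial_q G_{cc}^s=0$, so the locus $\{\beta=0\}$ in $\mathcal L$ maps onto $L$ itself — consistent with $L\subset\E_{cc}^s(L)$. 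The $\mathbb Z_2$ symmetry $\beta\mapsto-\beta$ advertised in the abstract is manifest in $G_{cc}^s$. The Legendrian statement follows by adjoining $z=G_{cc}^s$, which on the critical locus is the "area" function $f$ up to the additive constant coming from the choice of $\xi_0$; hence its wavefront is $\tilde{\E}_{cc}^s(L)$.

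For the special case the argument is parallel but shorter. Here $Q=Q(s,t)$ is odd in $t$, $L=\{(s,\partial_t Q(s,0))\}$, and the IAS is $(s,t)\mapsto(x(s,t),f(s,t))$ with $x=(s,\partial_t Q)$, $y=(t,\partial_s Q)$, $f=Q-t\,\partial_t Q$. Writing $q=s$, $p=\partial_t Q(s,0)$ near $L$ and $\beta=t$, I would compute $\partial_\beta G_{sp}^s=\partial_t Q(q,\beta)-p$ and $\partial_q G_{sp}^s=\partial_s Q(q,\beta)$; the equation $\partial_\beta G_{sp}^s=0$ reads $p=\partial_t Q(q,\beta)$, cutting out the graph over $\R^n$ whose center-projection is $L$ when $\beta=0$, while $\partial^2_{\beta\beta}G_{sp}^s=\partial^2_{tt}Q(q,\beta)$ is exactly the matrix whose degeneracy defines the singular set of the special IAS, as recalled before the theorem. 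Thus the caustic of $G_{sp}^s$ near $\beta=0$ is $\E_{sp}^s(L)$, and adjoining $z=G_{sp}^s=Q(q,\beta)-p\cdot\beta$, which on $\partial_\beta G=0$ equals $Q(q,\beta)-\beta\,\partial_t Q(q,\beta)=f$, gives $\tilde{\E}_{sp}^s(L)$; oddness of $Q$ in $t=\beta$ again supplies the $\mathbb Z_2$ symmetry. The main obstacle I anticipate is not any single computation but the bookkeeping that makes "on shell" precise: one must check that restricting the parameter $\beta$ to a neighborhood of $0$ genuinely captures all and only those singular points of $\E_{cc}(L)$, $\E_{sp}(L)$ approaching $L$ — equivalently, that the off-shell branches correspond to $\beta$ bounded away from $0$ — and that $G_{cc}^s$, $G_{sp}^s$ are honest generating families (the non-degeneracy of the full Hessian in $(\beta$, fibre$)$ directions, i.e.\ that $\partial^2 G/\partial\beta\partial x$ has full rank $k$) so that $\mathcal L$, $\tilde{\mathcal L}$ are smooth submanifold-germs; this last point is where I would invest the most care, and it is presumably what the detailed treatment in section \ref{genfamilydescription} is for.
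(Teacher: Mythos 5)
Your route is the same as the paper's: the paper proves Theorem \ref{precise} exactly by exhibiting $g_{cc}^s$ and $g_{sp}^s$ as generating functions, passing to $G=g-p\cdot\beta$, and invoking the standard correspondence between generating families and Lagrangian/Legendrian map-germs (its section \ref{genfamilydescription} is precisely the computation you outline), so there is no genuinely different idea here and your verification of the critical locus, of $(\dot q,\dot p)=y$, and of $z=G=f$ on that locus is the intended argument.

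One step, as written, is wrong and must be corrected. Differentiating $\partial_\beta G_{cc}^s=\frac12\bigl(dS(q+\beta)+dS(q-\beta)\bigr)-p$ once more in $\beta$ gives
$$
\partial^2_{\beta\beta}G_{cc}^s=\tfrac12\bigl(D^2S(q+\beta)-D^2S(q-\beta)\bigr),
$$
not the sum you wrote. The sign matters: for $u=q+\beta$, $v=q-\beta$, non-transversality of $T_uL^{+}$ and $T_vL^{-}$ is equivalent to singularity of $D^2S(u)-D^2S(v)$ (this is the paper's description of the singular set of the center-chord IAS), so identifying the degeneracy locus of $\partial^2_{\beta\beta}G_{cc}^s$ with that singular set is false for the sum but exactly right for the difference. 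Once the sign is fixed the rest goes through. Concerning the two worries you defer to the end: the Morse-family (non-degeneracy) condition is automatic here, since $\partial^2G/\partial\beta\,\partial p=-I_n$ already has full rank for both $G_{cc}^s$ and $G_{sp}^s$, so $\mathcal L$ and $\tilde{\mathcal L}$ are smooth submanifold-germs with no further work; and no extra ``on-shell bookkeeping'' is needed, because the theorem is a statement about germs at $0\in L$ with the germ of $\E^s$ on shell \emph{defined} as the set of critical values of the germ of this construction at $\beta=0$ (the paper is explicit that the germ of $\E_{cc}(L)$ at a point of $L$ may also contain off-shell branches, which are simply not part of the object the theorem describes).
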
}

\p{In the above theorem, $\pi:T\mathbb R^{2n}\to \mathbb R^{2n}$ is the canonical projection, with $\tilde\pi:T\mathbb R^{2n}\times\mathbb R\to \mathbb R^{2n}\times\mathbb R$ its trivial extension, and $\Omega$ is the tangential lift (\cite{Tu},\cite{GU}) of the canonical symplectic form on $\mathbb R^{2n}$, cf. (\ref{Omega}), with $\theta$ being the contact form associated to $\Omega$ which is semi-basic w.r.t. $\tilde{\pi}$, cf. (\ref{theta}) below.}
	
\p{From (\ref{oddGcc}), $G_{cc}^s$ is odd in $\beta$.  Likewise, from (\ref{oddGsp}) and the fact that $Q(q,\beta)$ is odd in $\beta$, $G_{sp}^s$ is also odd in $\beta$. Therefore, in both cases we must consider odd deformations of odd generating families, for classification of the singularities on shell, cf. section 4 and specially Theorem  \ref{equivalence} below.}

	\p{The set $\E_{cc}^s(L)$ was studied in \cite{DMR} \w{(see also \cite{DZ3})}, where its stable Lagrangian singularities were classified, when $L$ is a curve or a surface. In this paper, we adapt the results from
	\cite{DMR} to
classify the stable Legendrian singularities on shell of the center-chord IAS and
classify the stable Lagrangian and Legendrian singularities on  shell of the special IAS, for such IAS associated to curves and Lagrangian surfaces.} 

Thus, the important characteristic of these on-shell singularities, both in  center-chord and  special cases, is that they possess a hidden $\mathbb{Z}_2$ symmetry \p{that descends from the explicit $\mathbb Z_2$ symmetry of $\mathcal L$ or $\tilde{\mathcal{L}}$ due to $G$ being odd in $\beta$. Such a  hidden $\mathbb{Z}_2$ symmetry  is absent from the off-shell singularities, so the classification of on-shell singularities is different.}  This is relevant for the geometric  study of solutions of the Monge-Amp\`ere equation for functions $F:\R^{2n}\to\R$  whose singularity set contains a Lagrangian submanifold $L\subset\R^{2n}$.

On the other hand, a remarkable distinction of the special case is that $G^s_{sp}$ is necessarily real analytic, and thus, in the special case, we should in principle consider the classification
of singularities up to analytic equivalence. It turns out, however, that imposing real analyticity amounts to no refinement on the classification under smooth diffeomorphisms \cite{BKP}, so the classification of singularities of odd smooth functions, presented in \cite{DMR}, applies equally to classification of singularities on shell of both  center-chord and  special IAS.

This paper is organized as follows:

In section 2, we review the properties of center-chord and special IAS, emphasizing the ones
obtained from a single Lagrangian submanifold; we also characterize the IAS coming from a single Lagrangian submanifold among all possible center-chord and special IAS, showing that for each $L$ there is a unique canonical center-chord IAS and a unique canonical special IAS obtained from $L$. \p{Thus, in particular, many results in this section are of a global nature, in contrast with the remaining of the paper.}

In section 3, we describe the on-shell odd generating families for the canonical center-chord and special IAS, \p{detailing Theorem \ref{precise}, and then, in section 4, we study equivalence and stability of Lagrangian/Legendrian maps which are $\mathbb Z_2$-symmetric along the fibers, in relation with  equivalence, stability and versality of generating families  in the odd category}, expanding and complementing the treatment developed in \cite{DMR}. Section 4 is also intended to clarify many results in singularity theory which are not so familiar to nonspecialists, in view of the interdisciplinary nature of the paper.

Then, in section 5,  we present the classification of simple Lagrangian and Legendrian singularities on shell
for the canonical center-chord and special IAS obtained from $L$, producing explicit examples that show they are all realized,
and we also present the classification and geometrical condition/interpretation of all stable Lagrangian and Legendrian singularities on shell
for the canonical center-chord and special IAS
obtained from curves and Lagrangian surfaces, by adapting the results presented in \cite{DMR}.

Finally, in Section 6 we present the proof of Theorem \ref{equivalence}.

\section{Singular center-chord and special IAS }

\subsection{Center-chord IAS}\label{sectionccias}

Let $U, V$ be \w{simply connected} open subsets of $\R^n$. Consider a pair of Lagrangian immersions $\alpha^{+}:U\to\R^{2n}$ and $\alpha^{-}:V\to\R^{2n}$, where $\R^{2n}$ is the affine symplectic space with the canonical symplectic form $\omega=\sum_{i=1}^n dq_i\wedge dp_i$, $q=(q_1,...,q_n)$, $p=(p_1,...,p_n)$.
Define $x,y:U\times V\to\R^{2n}$ by
\begin{equation}
x(u,v)=\frac{1}{2}\left( \alpha^{+}(u)+\alpha^{-}(v)\right),\ \ y(u,v)=\frac{1}{2}\left( \alpha^{+}(u)-\alpha^{-}(v) \right).
\end{equation}

Fix a pair of parameters $(u_0,v_0)\in U\times V$.  For a given $(u,v)\in U\times V$, consider the oriented curve $\delta(u,v,u_0,v_0)$ in $\R^{2n}$ obtained by concatenating
the chord connecting $\alpha^{+}(u_0)$ and $\alpha^{-}(v_0)$, a curve in $L^{-}=\alpha^-(V)$ connecting $\alpha^{-}(v_0)$ and $\alpha^{-}(v)$,  the chord connecting $\alpha^{-}(v)$ and $\alpha^{+}(u)$, and
a curve in $L^{+}=\alpha^+(U)$ connecting $\alpha^{+}(u)$ and $\alpha^{+}(u_0)$.

Because $\delta$ is closed and $\R^{2n}$ is simply connected, $\delta=\partial\Sigma$ and \w{ $\omega=d\eta$, where $\eta= \sum_{i=1}^n p_idq_i$}. Denote
$$f[\delta] \ =\int_{\Sigma}\omega \ =\int_{\delta}\eta \ =\int_{\delta(u,v,u_0,v_0)} p_idq_i \ .$$
 It is clear that $f[\delta]$ is independent of the choice of the curves along $L_{+}$ and $L_{-}$ and that, if we change the initial pair $(u_0,v_0)$ we just add a constant to $f$, \w{since $U, V$ are simply connected and $\alpha^+, \alpha^-$ are Lagrangian immersions}. Thus, up to a constant,
we can write $f[\delta]=f(u,v)$. The following proposition was proved in \cite{CDR}:

\begin{proposition}\label{CCIAS}
The function $f:U\times V\to\R$ defined above  \w{ up to an additive constant}  satisfies
$$
f_u=\omega(x_u,y),\ \ f_v=\omega(x_v,y).
$$
Moreover, the map $\phi:U\times V\subset\R^{2n}\to\R^{2n+1}$
$
\phi(u,v)=\left(x(u,v),f(u,v) \right)
$
is an improper affine map, which, when regular, defines an improper affine sphere (IAS).
\end{proposition}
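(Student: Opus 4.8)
The plan is to prove the two assertions of Proposition~\ref{CCIAS} separately: first the PDE system $f_u=\omega(x_u,y)$, $f_v=\omega(x_v,y)$ for the function $f$ defined via the path integral, and then, granting these, the statement that $\phi=(x,f)$ is an improper affine map which defines an IAS wherever it is regular.

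\medskip
\noindent\textbf{Step 1: Differentiating the path integral.}
First I would fix $(u_0,v_0)$ and regard $f(u,v)=\int_{\delta(u,v,u_0,v_0)}\sum p_i\,dq_i$. The independence of the homotopy class of the connecting arcs in $L^{\pm}$ — which is guaranteed because $U,V$ are simply connected and $\alpha^{\pm}$ are Lagrangian, so $(\alpha^{\pm})^*\eta$ is closed — lets me compute $\partial f/\partial u$ by varying only the endpoint $\alpha^+(u)$ of the $L^+$-arc and of the chord $[\alpha^-(v),\alpha^+(u)]$. Concretely, I would parametrize the moving chord as $c(\tau)=\alpha^-(v)+\tau(\alpha^+(u)-\alpha^-(v))=x(u,v)+(2\tau-1)y(u,v)$, $\tau\in[0,1]$, compute $\int_0^1 \eta(c(\tau))\cdot c'(\tau)\,d\tau$ and differentiate in $u$, combining with the contribution $-\eta(\alpha^+(u))\cdot\alpha^+_u$ (sign from orientation) coming from the $L^+$-arc endpoint. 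After collecting terms, using $\eta=\sum p_i\,dq_i$ and the bilinearity of $\omega$, the $\tau$-integral should telescope (the closedness of $\eta$ along $L^+$ kills the arc term modulo a boundary contribution) and leave exactly $f_u=\omega(x_u,y)$; the computation for $f_v$ is symmetric. The main obstacle in this step is purely bookkeeping: tracking the orientation signs of the four pieces of $\delta$ and verifying that the endpoint/boundary terms from the $L^{\pm}$-arcs cancel against pieces of the chord integrals, so that one is left with a clean expression in $x_u,x_v,y$ only. It may be cleaner to verify the formula $df = \omega(dx, y)$ as a $1$-form identity by a direct exterior-calculus argument: $d(f\text{-integrand}) $ along the homotopy of chords, which amounts to showing $d(\iota_Y\omega \text{ pulled back along } x) = $ the integrand's differential, using that $\omega(x_u,x_v)$-type terms vanish because $L^\pm$ are Lagrangian.

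\medskip
\noindent\textbf{Step 2: From the PDE to the affine-sphere property.}
Granting $df=\omega(dx,y)$, I would show $\phi=(x,f):U\times V\to\R^{2n+1}$ is an improper affine map, i.e. that (wherever $x$ is a local diffeomorphism onto its image, so that $f=F\circ x^{-1}=:F(x)$ is a well-defined function) the graph of $F$ satisfies the Monge–Amp\`ere equation $\det(D^2F)=\pm1$ with the constant affine normal in the $x_{2n+1}$-direction. The key is that $dF=\iota_Y\omega$ as a $1$-form on (the image of) $x$, so $Y$ is the Hamiltonian vector field of $F$ with respect to the canonical $\omega$ on $\R^{2n}\ni x$. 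Then $D^2F$ is, in Darboux coordinates, the matrix of $\partial Y/\partial x$ conjugated by the standard symplectic matrix $J$; computing $\det(D^2F)$ reduces to $\det(\partial y/\partial x)$ times $\det J = \pm 1$. Now I use the explicit forms $x=\tfrac12(\alpha^++\alpha^-)$, $y=\tfrac12(\alpha^+-\alpha^-)$: in suitable coordinates adapted to the Lagrangian splitting, $\det(\partial y/\partial(\alpha^+,\alpha^-))/\det(\partial x/\partial(\alpha^+,\alpha^-))$ is $\pm1$ because $x,y$ differ from $(\alpha^+,\alpha^-)$ by the constant linear map $\tfrac12\begin{pmatrix}I&I\\ I&-I\end{pmatrix}$, whose relevant Jacobian ratio is a fixed sign. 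Hence $\det(D^2F)=\pm1$ identically on the regular set, which is precisely the statement that $\phi$ is improper affine and, where regular, an IAS. I expect Step~1 to be the genuine content and Step~2 to be essentially formal once the Hamiltonian relation $dF=\iota_Y\omega$ is in hand; the only care needed in Step~2 is that the blanket "up to an additive constant" does not affect $D^2F$, which is immediate.
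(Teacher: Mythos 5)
The paper does not actually prove Proposition~\ref{CCIAS}; it cites the proof to \cite{CDR}, so there is no internal argument to compare against. Your two-step plan is correct and is essentially the standard computation: Step~1 works exactly as you describe (the Lagrangian condition is used only to make $f$ well defined via closedness of $(\alpha^{\pm})^{*}\eta$; the chord contribution evaluates in closed form to $2x_p\cdot y_q$ because $\int_0^1(2\tau-1)\,d\tau=0$, and the arc terms differentiate to $\pm(\alpha^{\pm})^{*}\eta$), and Step~2 is the right reduction: $dF=\omega(\,\cdot\,,Y)$ makes $Y$ the Hamiltonian field of $F$, so $\det D^2F=\pm\det(\partial Y/\partial x)$, and since $y_u=x_u$, $y_v=-x_v$ one gets $\det(D_{(u,v)}y)=(-1)^n\det(D_{(u,v)}x)$, hence $\det(\partial Y/\partial x)=(-1)^n$ on the regular set — this is a cleaner way to state your ``constant linear map'' argument, which as written compares Jacobians of maps into $\mathbb R^{4n}$ rather than of $x,y$ as maps from the $2n$-dimensional parameter space.

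One caution on Step~1: you assert the bookkeeping ``leaves exactly $f_u=\omega(x_u,y)$,'' but with the paper's literal conventions ($\eta=\sum p_i\,dq_i$, $\omega=\sum dq_i\wedge dp_i$) the computation gives $f_u=-2\,\omega(x_u,y)$; note that $d\bigl(\sum p_i\,dq_i\bigr)=-\omega$, so the paper's chain $\int_\Sigma\omega=\int_\delta\eta$ already hides a sign, and Example~\ref{ex:TorusIn} confirms that the $f$ satisfying the proposition is $-\tfrac12\int_\delta p\,dq$ rather than $\int_\delta p\,dq$. This is a normalization defect of the statement, not of your method, but if you carry out your plan you should expect to land on a universal nonzero constant multiple of $\omega(x_u,y)$ and then fix the normalization of $f$ accordingly; the IAS conclusion in Step~2 is unaffected, since a constant rescaling of $F$ only rescales $\det D^2F$ by a constant.
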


\begin{remark}
	In all cases of IAS considered in this paper, we shall often abbreviate and refer to singularities of the map \m{$\phi$} as singularities of the IAS.
\end{remark}

The type of IAS from Proposition \ref{CCIAS} is called a {\it center-chord} IAS. By a smooth change of coordinates, we may assume that locally
\begin{equation}\label{eq:IASccGraph}
\alpha^{+}(u)= \left( u, dS^{+}(u)  \right), \ \ \alpha^{-}(v)= \left( v, dS^{-}(v)  \right).
\end{equation}
for some pair of  functions $S^{+}:U\subset\R^{n}\to\R$ and $S^{-}:V\subset\R^{n}\to\R$.

The {\it singular set} of the center-chord IAS consists of the pairs $(u,v)\in U\times V$ such that the tangent spaces of $\alpha^{-}(u)$ and  $\alpha^{+}(v)$ are not transversal subspaces of $\R^{2n}$, or equivalently, $d^2S^{+}(u)-d^2S^{-}(v)$ is singular. The image of the singular set by the map $x(u,v)$ is called the {\it Wigner caustic} of $(L^{-},L^{+})$ and will be denoted  by $\E_{cc}(L^{-},L^{+})$, while the image of the singular set by $\phi=(x,f)$ will be denoted
$\tilde{\E}_{cc}(L^{-},L^{+})$.

\medskip
\paragraph{Center-chord IAS from a given Lagrangian submanifold}
In this paper, we shall be particularly interested in the case that the Lagrangian submanifolds $L^{+}$ and $L^{-}$ coincide, i.e.,
$$
\alpha^{+}(u)=\alpha(u), \ \ \ \alpha^{-}(v)=\alpha(v),
$$
for some Lagrangian immersion $\alpha:U\subset\R^n\to\R^{2n}$. In this case we shall denote the image of this immersion by $L=L^{+}=L^{-}$ and the
corresponding IAS by $\phi_{cc}(L)$. In case $\alpha$ is of the form \eqref{eq:IASccGraph} we shall write
\begin{equation}\label{eq:GraphS}
\alpha(u)=\left( u,  dS(u)\right),
\end{equation}
for some function $S:U\subset\R^n\to\R$.

When $L^{+}=L^{-}=L$, the caustic $\E_{cc}(L,L)$ is the Wigner caustic of the Lagrangian submanifold $L$ and will be denoted $\E_{cc}(L)$.
In this case, the set $u=v$ is contained in the singular set of $\phi$.
Since \m{$x(u,u)=\alpha(u)$}, we conclude that $L\subset\E_{cc}(L)$ (see also \cite{DMR} and \cite{DR}).

\begin{example}\label{ex:TorusIn}
Assume that $\alpha(u)=(\cos(u),\sin(u))$, i.e., $L$ is the unit circle in the plane. Then
$$
x(u,v)=\cos\left( \frac{u-v}{2}   \right)\left(  \cos\left( \frac{u+v}{2}   \right),\sin\left( \frac{u+v}{2}   \right)\right)
$$
$$
y(u,v)=\sin\left( \frac{u-v}{2}   \right)\left( - \sin\left( \frac{u+v}{2}   \right),\cos\left( \frac{u+v}{2}   \right)\right)
$$
$$
f(u,v)=\frac{1}{4}\left(v-u+\sin(u-v)   \right),
$$
(see figure \ref{fig:CircleIn}). The image of the map $x$ is the unit disc $D$ and the singular set is $u=v+k\pi$, $k\in\Z$.
This example can be generalized by taking
$$
\alpha(u)=\left(\cos(u_1),\sin(u_1),.....,\cos(u_n),\sin(u_n)\right), \ \ u=(u_1,...,u_n),
$$
so that $L$ is the $n$-dimensional torus in $\R^{2n}$. Then
$$
f(u,v)=\frac{1}{4}\sum_{i=1}^n \left(v_i-u_i+\sin(u_i-v_i)   \right).
$$
The singular set of this center-chord IAS is the union of submanifolds
$$
\R^2\times\R^2.....\times \{u_i=v_i+k\pi\}\times ...\R^2, \ \ 1\leq i\leq n.
$$
\end{example}

\begin{figure}[htb]
\centering
\includegraphics[width=0.50\linewidth]{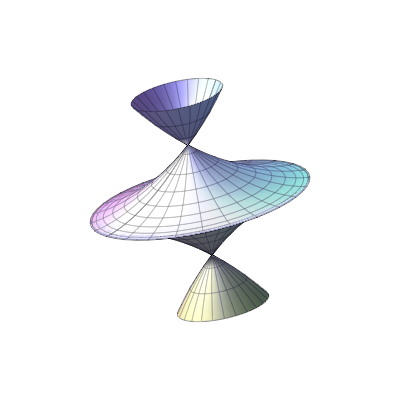}
\caption{ The singular center-chord IAS of example \ref{ex:TorusIn} }
\label{fig:CircleIn}
\end{figure}

\subsection{Special IAS}\label{sectionspecialias}

Consider a complex Lagrangian immersion $\gamma=(\gamma_1,\gamma_2):W\subset\C^n\to\C^{2n}$ and denote its image by $\mathbb L$.
Also, let $x,y:W\to\R^{2n}$ be given by
\begin{equation}\label{eq:IASSpGeneral}
x(w)=\frac{1}{2}\left( \gamma(w)+\bar\gamma(w) \right);\ \ y(w)=\frac{1}{2i}\left( \gamma(w)-\bar\gamma(w) \right),
\end{equation}
$w\in W$, $w=u+iv$. The following proposition was proved in \cite{CDR}:

\begin{proposition}\label{SIAS}
There exists $f:W\to\R$, unique up to an additive constant, such that
\begin{equation}\label{eq:fufv}
f_u=\omega(x_u,y),\ \ f_v=\omega(x_v,y)
\end{equation}
and the map
\begin{equation*}\label{eq:}
\phi(u,v)=\left(x(u,v),f(u,v) \right)
\end{equation*}
is an improper affine map, which, when regular, defines an improper affine sphere (IAS).  Moreover, the IAS $\phi$ does not depend on the parameterization of the complex Lagrangian immersion $\gamma$.
\end{proposition}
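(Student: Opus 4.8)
The plan is to follow the scheme of the proof of Proposition~\ref{CCIAS}, with the pair $(\alpha^+,\alpha^-)$ of real Lagrangian immersions replaced by the pair $(x,y)=(\mathrm{Re}\,\gamma,\mathrm{Im}\,\gamma)$ extracted from the single complex Lagrangian immersion $\gamma$. Write $w=(w_1,\dots,w_n)$, $w_k=u_k+iv_k$, and $\gamma_{w_k}=\partial\gamma/\partial w_k=a_k+ib_k$ with $a_k,b_k\in\R^{2n}$. The first step is to record the Cauchy--Riemann-type identities coming directly from holomorphicity of $\gamma$, namely
\[
x_{u_k}=a_k,\qquad x_{v_k}=-b_k,\qquad y_{u_k}=b_k,\qquad y_{v_k}=a_k,
\]
so in particular $y_{u_k}=-x_{v_k}$ and $y_{v_k}=x_{u_k}$.

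The main step is the existence and uniqueness of $f$. Equations~(\ref{eq:fufv}) assert that $df=\lambda$, where $\lambda:=\sum_k\omega(x_{u_k},y)\,du_k+\sum_k\omega(x_{v_k},y)\,dv_k$, so I would prove $d\lambda=0$. Differentiating the coefficients of $\lambda$ and using the identities above, the second-order terms cancel by equality of mixed partials of $x$, and the surviving first-order terms organize, over the $u$--$v$ and the $u$--$u$ (equivalently $v$--$v$) index blocks, into the quantities $\omega(a_k,a_l)-\omega(b_k,b_l)$ and $\omega(a_k,b_l)-\omega(a_l,b_k)$, respectively. These are precisely the real and the imaginary part of $\omega_\C(\gamma_{w_k},\gamma_{w_l})$, where $\omega_\C$ is the $\C$-bilinear extension of $\omega$. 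Since $\gamma$ is complex Lagrangian we have $\gamma^*\omega_\C=0$, i.e. $\omega_\C(\gamma_{w_k},\gamma_{w_l})=0$ for all $k,l$; hence $d\lambda=0$, and (with $W$ simply connected, as $U,V$ are in Proposition~\ref{CCIAS}, or else locally) a primitive $f$ exists and is unique up to an additive constant.

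The last step is to show $\phi=(x,f)$ is an improper affine map. From~(\ref{eq:fufv}) one has $df(\xi)=\omega(dx(\xi),y)$ for every tangent vector $\xi$, hence $\ker d\phi=\ker dx$; so $\phi$ is an immersion exactly where $x$ is a local diffeomorphism, and there we set $F=f\circ x^{-1}$. The same formula yields $dF=\pm\iota_y\omega$, so that $y$ is, up to sign, the Hamiltonian vector field $X_F$ of $F$; since $\det D^2F=\det(DX_F)$ (a standard computation), this gives $\det D^2F=\det(Dy/Dx)=\det(D_{(u,v)}y)/\det(D_{(u,v)}x)$. Writing the Jacobians in block-column form, $D_{(u,v)}x=[\,a\mid -b\,]$ and $D_{(u,v)}y=[\,b\mid a\,]$, a swap of the two blocks of $n$ columns contributes $(-1)^{n^2}$ and the $n$ sign changes contribute $(-1)^{n}$, with product $(-1)^{n^2+n}=1$; thus $\det D_{(u,v)}y=\det D_{(u,v)}x$ and $\det D^2F\equiv 1$. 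So wherever it is regular $\phi$ is a (convex) improper affine sphere, and in general it is an improper affine map, with constant transversal $e_{2n+1}$. Parameterization independence is then immediate: a biholomorphism $\psi$ replaces $\gamma$ by $\gamma\circ\psi$, hence $x,y,\lambda$ by $x\circ\psi,\,y\circ\psi,\,\psi^*\lambda$ and $f$ by $f\circ\psi$ up to a constant, so $\phi$ is replaced by $\phi\circ\psi$; equivalently $x(W)=\{\mathrm{Re}\,\ell:\ell\in\mathbb L\}$, and the induced affine structure, depend only on $\mathbb L$.

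I expect the crux to be the computation in the second paragraph: checking that the closedness obstruction for $\lambda$ is, component by component, exactly the vanishing of $\gamma^*\omega_\C$ (both its real and imaginary parts), so that the complex Lagrangian hypothesis is used in full and is precisely what is needed. The remaining steps are the standard passage from $f$ to the Monge--Amp\`ere determinant and a transcription of the center-chord argument.
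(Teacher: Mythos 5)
Your proof is correct, and it is worth noting that the paper itself does not prove Proposition \ref{SIAS} in situ: it cites \cite{CDR} and then only records the geometric construction of $f$ as a symplectic area, $f[\delta]=\int_\Sigma\omega=\int_\delta\eta$ over the region bounded by the two imaginary chords and arcs in $\mathbb L$ and $\overline{\mathbb L}$, with well-definedness following from Stokes and the Lagrangian condition. Your route replaces that area/Stokes argument by a direct integrability check: you verify $d\lambda=0$ for $\lambda=\sum_k\omega(x_{u_k},y)\,du_k+\omega(x_{v_k},y)\,dv_k$ and identify the obstruction, block by block, with the real and imaginary parts of $\omega_{\C}(\gamma_{w_k},\gamma_{w_l})$ — I checked the Cauchy--Riemann identities $x_{u_k}=a_k$, $x_{v_k}=-b_k$, $y_{u_k}=b_k$, $y_{v_k}=a_k$ and the resulting coefficients $\omega(a_k,b_l)-\omega(a_l,b_k)$ and $\omega(a_k,a_l)-\omega(b_k,b_l)$, and they are exactly $\operatorname{Im}$ and $\operatorname{Re}$ of $\omega_{\C}(\gamma_{w_k},\gamma_{w_l})$, so the complex Lagrangian hypothesis is used in full, as you anticipated. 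The two approaches are mathematically equivalent (closedness of $\lambda$ versus path-independence of the area integral), but yours is self-contained and makes the role of $\gamma^*\omega_{\C}=0$ explicit, while the paper's version additionally yields the geometric interpretation of $f$ that is reused later. You also supply the Monge--Amp\`ere step ($\det D^2F=\det DY=\det[\,b\mid a\,]/\det[\,a\mid -b\,]=1$) and the reparameterization invariance, both of which the paper omits here entirely by deferring to \cite{CDR}; these computations are correct. The only caveat, which you already flag, is that single-valuedness of $f$ on all of $W$ needs $W$ simply connected (or a local statement), an assumption the paper makes explicitly only in the center-chord case.
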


From equations (\ref{eq:IASSpGeneral}) and (\ref{eq:fufv}), the function $f$ can be given a geometrical description similar to the one in the center-chord case. Let $\delta=\delta(w,w_0)$ be a oriented curve formed by the concatenation of the imaginary chord joining $\bar\gamma(w_0)$ to  $\gamma(w_0)$, a curve in $\mathbb L$ joining $\gamma(w_0)$ to $\gamma(w)$, the imaginary chord joining $\gamma(w)$ to $\bar\gamma(w)$, and a curve in $\overline{\mathbb L}$ joining $\bar\gamma(w)$ to $\bar\gamma(w_0)$.

Because $\delta$ is closed and $\C^{2n}$ is simply connected, $\delta=\partial\Sigma$, and because $\omega$ is real and exact, $\omega=d\eta$, where $\eta=\frac{i}{2}w d\bar{w}$. Denote
$$f[\delta] \ =\int_{\Sigma}\omega \ =\int_{\delta}\eta \ = \frac{i}{2}\sum_{k=1}^n\int_{\delta(w,w_0)} w_kd\bar{w}_k \ .$$
It is clear that the real function $f[\delta]$ is independent of the choice of the curves along $\mathbb L$ and $\overline{\mathbb L}$ (Lagrangian condition) and that, if we change the initial point $w_0$ we just add a constant to $f$. Thus, up to a constant,
we can write $f[\delta]=f(w)$.

\medskip\noindent
The type of IAS from Proposition \ref{SIAS}  is called {\it special} (\cite{Cortes00}).  As shown in \cite{CDR}, by a holomorphic change of coordinates, we may locally reparameterize $\gamma$ by
\begin{equation}\label{eq:IASSpGraph}
\gamma(z)=\left(z,dH(z)\right), \ \ z\in Z,
\end{equation}
where $H:Z\subset\C^n\to\C$ is a holomorphic map. Furthermore, setting $z=s+it$ and $H=P+iQ$, we can write
\begin{equation}\label{eq:IASSpGraph2}
x(z)=\left(s,\frac{\partial Q}{\partial t}\right),\ \ y(z)=\left(t,\frac{\partial Q}{\partial s}\right),
\end{equation}
and we also have that, up to an additive constant,
\begin{equation}\label{eq:IASspf}
f(s,t)=Q(s,t)-\displaystyle{\sum_{k=1}^n} t_k\cdot\frac{\partial Q}{\partial t_k} \ ,
\end{equation}
\p{a formula first obtained by Cort\'es in \cite{Cortes02}.}

The {\it singular set} of the special IAS consists of points $(s,t)\in U$ such that $\frac{\partial^2Q}{\partial t^2}$ is singular. The image of the singular set by $x$ will be denoted $\E_{sp}(\mathbb L)$, while the image of the singular set by $(x,f)$ will be denoted $\tilde\E_{sp}(\mathbb L)$.

\begin{remark}
	Recall that, in both the center-chord and the special cases, the regularity of the map $\phi:(u,v)\mapsto(x(u,v),f(u,v))$ is equivalent to having an invertible map $(u,v)\mapsto x(u,v)$, $x^{-1}$ possibly multiple valued, so that each function $F=f\circ x^{-1}:\mathbb U\subset\R^{2n}\to\R$ is well defined and  satisfies the Monge-Amp\`ere equation \eqref{eq:MA}, and each regular branch of $\phi$ is a graph of $F$.
\end{remark}

\medskip
\paragraph{Special IAS from a given Lagrangian submanifold}
In this paper we shall be interested in the case $Z$ is a domain in $\C^n$ invariant by conjugation and $\bar\gamma(z)=\gamma(\bar{z})$, which
is equivalent to saying that $H(\R^n\cap Z)$ is contained in $\R$. In this case, we shall denote by $L$ the image of $Z\cap\R^n$ by the map $x(z)$.

Assume now we are given a real analytic Lagrangian submanifold $L$, image of $\alpha$
given by equation \eqref{eq:GraphS}, for some $S:U\subset\R^n\to\R$ real analytic.
\m{ Then there exists a domain $Z\subset\C^n$ invariant by conjugation such that $Z\cap\R^n=U$ and a holomorphic map $H: Z\to\C$ such that $H$ restricted to $U$ equals $S$. }
In particular, the image of $Z\cap\R^n$ by $H$ is contained in $\R$.

It is clear from the above two paragraphs that $\mathbb L\cap\overline{\mathbb L}=L\subset\R^{2n}$ is a Lagrangian submanifold of the real symplectic space and that the special IAS defined by $\gamma(z)=(z,dH(z))$ depends only on $L$. Therefore we shall denote it by $\phi_{sp}(L)$, and we shall denote by $\E_{sp}(L)$ the caustic of $\phi_{sp}(L)$.

We may assume that $L$ is given by equation \eqref{eq:IASSpGraph}, for some
$H:Z\to\C$ satisfying $\bar{H}(z)=H(\bar{z})$. This implies that the imaginary part $Q$ of $H$ is odd in $t$, where $z=s+it$, and so
$\frac{\partial^2Q}{\partial t^2}=0$ for $t=0$. We conclude that $L\subset\E_{sp}(L)$.

\begin{example}\label{ex:TorusOut}
Let  $\gamma(z)=(\cos(z),\sin(z))$. Then $L$ is the unit circle in $\R^2$ and
$$
x(s,t)=\cosh(t)\left(\cos(s),\sin(s)\right);\ \ y(s,t)=\sinh(t)\left(-\sin(s),\cos(s)\right),
$$
$$
f=\frac{1}{4}\left( \sinh(2t)-2t \right),
$$
(see figure \ref{fig:CircleOut}). The image of the map $x$ is the complement $\bar{D}$ of the open unit disc,
while the singular set is $t=0$. This example can be generalized by considering $\gamma:\C^n\to\C^{2n}$ given by
$$
\gamma(z)=\left(\cos(z_1),\sin(z_1),....,\cos(z_n),\sin(z_n)\right), \ \ z=(z_1,...,z_n).
$$
In this case $L$ is the $n$-dimensional torus $L\subset\R^{2n}$ and
$$
f(s,t)=\frac{1}{4}\sum_{i=1}^n \left( \sinh(2t_i)-2t_i\right),
$$
the caustic \p{is} the union of submanifolds $\R^2\times\R^2.....\times \{t=0\}\times ...\R^2$, where $\{t=0\}$ is
in coordinate $i$.
\end{example}

\begin{figure}[htb]
\centering
\includegraphics[width=0.50\linewidth]{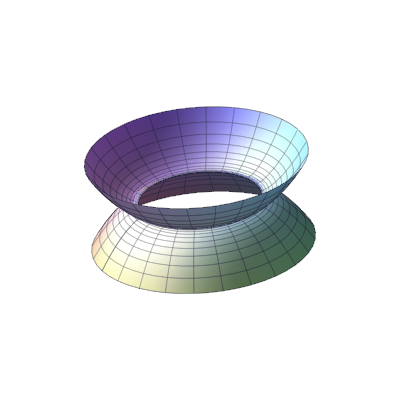}
\caption{The singular special IAS of example \ref{ex:TorusOut}.  }
\label{fig:CircleOut}
\end{figure}

\subsection{Affine Bj\"orling problem}

The {\it affine Bj\"orling problem} for $n=1$ consists in finding an improper affine sphere containing a smooth curve, analytic in the convex case, with a prescribed co-normal along it.
\m{ Recall that the co-normal at a point of the curve is a co-vector that has the tangent line in its kernel and takes the value $1$ at the affine Blaschke normal}.
We observe that the co-normal for both types of IAS is given by $(y,1)$ and thus this problem is equivalent to finding an IAS given the values of $(x,f)$ and $y$ along a curve in the parameter plane. 

The affine Bj\"orling problem for $n=1$ has a unique solution for the center chord case (see \cite[Thm.3.1]{Milan13} and  \cite[Thm.6.1]{Milan14}) and also for the special case (see \cite[Thm.6.1]{Galvez07}). We shall see below that by taking $y=0$ along the curve $L$, we obtain the IAS $\phi_{cc}(L)$ and
$\phi_{sp}(L)$ in each case.

We now let $n$ be general and characterize $\phi_{cc}(L)$ among the center-chord IAS $\phi_{cc}(L^{-},L^{+})$ and $\phi_{sp}(L)$ among the special IAS $\phi_{sp}(\mathbb{L})$.

For a center-chord IAS $\phi=\phi_{cc}(L^{-},L^{+})$, denote by \p{$\E_{cc}^{0}(L^{-},L^{+})$} the subset of $\E_{cc}(L^{-},L^{+})$ such that
the tangent spaces to $L^{+}$ at $\alpha^+(u)$ and to $L^{-}$ at $\alpha^-(v)$ are strongly parallel. For $\alpha^{\pm}$ of the form (\ref{eq:IASccGraph}), this is equivalent to having
$d^2S^{+}(u)=d^2S^{-}(v)$.

\begin{proposition}
Let $\phi=\phi_{cc}(L^{-},L^{+})$ and  $L$ a Lagrangian submanifold of $\R^{2n}$. The following statements are equivalent:
\begin{enumerate}
\item $\phi=\phi_{cc}(L)$.
\item \p{$L\subset\E_{cc}^{0}(L^{-},L^{+})$} and  $f$ is constant along $L$.
\item $y=0$ along $L$.
\end{enumerate}
\end{proposition}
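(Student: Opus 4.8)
The plan is to prove the cycle of implications $(1)\Rightarrow(3)\Rightarrow(2)\Rightarrow(1)$, working throughout in the graph coordinates \eqref{eq:IASccGraph}. For $(1)\Rightarrow(3)$: if $\phi=\phi_{cc}(L)$ then by definition $\alpha^+=\alpha^-=\alpha$ and the two parameter domains are identified, so along the diagonal $u=v$ (whose $x$-image is exactly $L$, as noted after \eqref{eq:GraphS}) we have $y(u,u)=\tfrac12(\alpha(u)-\alpha(u))=0$. This is immediate.

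For $(3)\Rightarrow(2)$: suppose $y=0$ along $L$, meaning there is a subset $\Lambda$ of the parameter space $U\times V$ with $x(\Lambda)=L$ and $y\equiv 0$ on $\Lambda$. From $y(u,v)=\tfrac12(\alpha^+(u)-\alpha^-(v))$, the condition $y=0$ forces $\alpha^+(u)=\alpha^-(v)$ for $(u,v)\in\Lambda$; differentiating along $\Lambda$ and using that $T_{\alpha^+(u)}L^+$ and $T_{\alpha^-(v)}L^-$ both equal $T_{x}L$, one gets that these tangent spaces coincide, i.e. (in graph form) $d^2S^+(u)=d^2S^-(v)$, so $L\subset\E_{cc}^0(L^-,L^+)$. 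For the statement that $f$ is constant along $L$: use Proposition \ref{CCIAS}, which gives $df = f_u\,du + f_v\,dv = \omega(x_u,y)\,du+\omega(x_v,y)\,dv$; since $y\equiv 0$ on $\Lambda$, both terms vanish along $\Lambda$, hence $f$ is locally constant along $L$ (here one uses that $L$ — equivalently each connected piece of $\Lambda$ — is connected, matching the simple-connectivity hypotheses on $U,V$).

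For $(2)\Rightarrow(1)$: this is the substantive direction and I expect it to be the main obstacle. Assume $L\subset\E_{cc}^0(L^-,L^+)$ and $f$ constant along $L$; we must recover $L^+=L^-=L$ up to the identification. The condition $L\subset\E_{cc}^0$ says that for each point of $L$ there is a pair $(u,v)$ with $d^2S^+(u)=d^2S^-(v)$ and $x(u,v)\in L$; strong parallelism of the tangent spaces plus the fact that $x(u,v)=\tfrac12(\alpha^+(u)+\alpha^-(v))$ lies on $L$ should pin down $\alpha^+(u)$ and $\alpha^-(v)$ individually. The idea is that the constancy of $f$ along $L$ is exactly the integrability/normalization condition needed to promote the pointwise tangential coincidence to an actual coincidence of the submanifolds: parametrize $L$ by $x$ restricted to the relevant locus, use $f_u=\omega(x_u,y)$, $f_v=\omega(x_v,y)$ and $df|_L=0$ to conclude $y=0$ along $L$ (since $x_u,x_v$ span $T_xL$ which is Lagrangian and $y$ is forced into its own complement by the parallelism condition, the vanishing of the two pairings forces $y=0$), and then $y=0$ gives $\alpha^+(u)=\alpha^-(v)$, so $L^+$ and $L^-$ share an open subset; by analyticity/connectedness (and the affine Bj\"orling uniqueness invoked earlier for the rigidity) they coincide, giving $\phi=\phi_{cc}(L)$.

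The delicate point throughout is step $(2)\Rightarrow(1)$: one must be careful that "strongly parallel tangent spaces" together with "$f$ constant along $L$" genuinely forces $y=0$ and not merely $y$ tangent to $L$ or some weaker statement; the key observation making this work is that a Lagrangian subspace is its own symplectic orthogonal, so $\omega(x_u,y)=\omega(x_v,y)=0$ for all tangent directions $x_u,x_v$ to the Lagrangian $L$ is equivalent to $y$ lying in $(T_xL)^\omega = T_xL$, and then the parallelism condition $d^2S^+=d^2S^-$ (which says the chord direction $y$ is "vertical" relative to the common tangent) is incompatible with a nonzero $y\in T_xL$ unless $y=0$. Once $y=0$ is established the rest is the Bj\"orling-type uniqueness already recorded in the text. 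I would also remark that $(1)\Rightarrow(2)$ follows for free by combining $(1)\Rightarrow(3)\Rightarrow(2)$, so only the three displayed implications need to be written out.
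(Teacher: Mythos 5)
Your cycle $(1)\Rightarrow(3)\Rightarrow(2)\Rightarrow(1)$ is the paper's cycle $(1)\Rightarrow(2)\Rightarrow(3)\Rightarrow(1)$ traversed the other way: your $(2)\Rightarrow(1)$ is exactly the paper's $(2)\Rightarrow(3)$ followed by its $(3)\Rightarrow(1)$, and the legs $(1)\Rightarrow(3)$ and $(3)\Rightarrow(2)$ are the easy ones. Everything except one step is fine and agrees with the paper: $f$ constant along $L$ gives $\omega(y,\dot x)=0$ for every $\dot x\in T_xL$, hence $y\in (T_xL)^{\omega}=T_xL$ since $L$ is Lagrangian; and once $y=0$ is known, $u=v$ and $dS^+(u)=dS^-(u)$ on an open set, so $S^+=S^-$ up to a constant and $\phi=\phi_{cc}(L)$ (no appeal to analyticity or Bj\"orling uniqueness is needed --- and analyticity is not available here, since the center-chord construction is only smooth).

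The gap is in your justification of the crucial claim that $y$ tangent to $L$ together with strong parallelism forces $y=0$. You assert that $d^2S^+(u)=d^2S^-(v)$ ``says the chord direction $y$ is vertical relative to the common tangent,'' i.e.\ that $y$ lies in a complement of $T_xL$, so that $y\in T_xL$ would finish the argument. That premise is false: strong parallelism is a condition on the tangent planes of $L^{\pm}$ at the two endpoints and does not constrain the chord $\alpha^+(u)-\alpha^-(v)$ at all; the chord can lie entirely in the common tangent direction $W$. Indeed, if $L=L^+=L^-$ is an affine Lagrangian subspace, every chord is tangent, every pair of tangent spaces is strongly parallel and $f$ is constant, yet $y\ne 0$ off the diagonal --- so any pointwise argument of the kind you propose must fail. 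What the paper uses instead is that strong parallelism identifies $T_xL$ with the common direction $W$ of $T_{\alpha^+(u)}L^+$ and $T_{\alpha^-(v)}L^-$, so $y\in W$ forces the affine tangent spaces at $\alpha^+(u)$, $\alpha^-(v)$ and $x$ to coincide, and it is from this coincidence that $y=0$ is drawn; the affine exception above is exactly what Remark \ref{yinA} is there to handle (condition (2) then still gives $L^+=L^-=L$, hence (1), and (3) only after the canonical choice $\alpha^+=\alpha^-$). Your write-up neither supplies a correct mechanism for the generic case nor registers the exceptional one.
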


\begin{proof}
\noindent
$(1)\Rightarrow(2)$ follows from the description of $\phi_{cc}(L)$ given in section \ref{sectionccias}.

\smallskip\noindent
$(2)\Rightarrow(3)$:
If $f$ is constant along $L$, then necessarily $\omega(y,x_u)=\omega(y,x_v)=0$,
implying that the chord $y$ is tangent to $L$. On the other hand, \p{$L\subset\E_{cc}^{0}(L^{-},L^{+})$}  implies that the tangent
spaces of $\alpha^{+}(u)$ and $\alpha^{-}(u)$ are strongly parallel. We conclude that the tangent spaces in fact coincide and $y=0$.

\smallskip\noindent
$(3)\Rightarrow (1)$: The condition $y=0$ at $L$ implies that $u=v$ and $dS^{+}(u)=dS^{-}(v)$. Thus $L$ is contained in the image of the diagonal $u=v$ and $dS^{+}(u)=dS^{-}(u)$. This implies that, up to a constant, $S^{+}(u)=S^{-}(u)$ and so $\phi=\phi_{cc}(L)$.
\end{proof}


\begin{remark}\label{yinA} If $L\supset A$, where $A$ is an affine subspace of $\R^{2n}$, then condition (2) only implies that $L=L^+=L^-$, thus implying (1), since it is possible to have a nonvanishing $y\in TA$ if $\alpha^+\neq\alpha^-$. But then, by choosing the canonical parametrization $\alpha^+=\alpha^-$ for $L^+=L^-$, we obtain (3). This canonical choice when $L^+=L^-$ shall always be assumed.  \end{remark}

For a special IAS $\phi=\phi_{sp}(\mathbb{L})$, denote by \p{$\E_{sp}^{0}(\mathbb{L})$} the subset of $\E_{sp}(\mathbb{L})$ such that the tangent spaces to $\mathbb{L}$ at $\gamma(z)$
and to $\overline{\mathbb{L}}$ at $\bar\gamma(z)$ are strongly parallel. For $\gamma$ of the form (\ref{eq:IASSpGraph}), this is equivalent to having
$\frac{\partial^2Q}{\partial t^2}=0$.

\begin{proposition}
Let $\phi=\phi(\mathbb{L})$ and  $L$ a Lagrangian submanifold of $\R^{2n}$. The following statements are equivalent:
\begin{enumerate}
\item $\phi=\phi_{sp}(L)$.
\item \p{$L\subset\E_{sp}^{0}(\mathbb{L})$} and $f$ is constant along $L$.
\item $y=0$ along $L$.
\end{enumerate}
\end{proposition}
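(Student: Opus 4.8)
The plan is to mirror the proof of the center-chord analogue, exploiting the symmetry of the special construction under conjugation. I would first establish $(1)\Rightarrow(2)$, then $(2)\Rightarrow(3)$, then $(3)\Rightarrow(1)$, closing the cycle.

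For $(1)\Rightarrow(2)$: by the discussion of $\phi_{sp}(L)$ in section \ref{sectionspecialias}, when $\mathbb L$ comes from a conjugation-invariant domain with $\bar\gamma(z)=\gamma(\bar z)$, we have $\gamma$ of the form \eqref{eq:IASSpGraph} with $H$ satisfying $\bar H(z)=H(\bar z)$; hence $Q$ is odd in $t$, so $\frac{\partial^2 Q}{\partial t^2}=0$ at $t=0$, giving $L\subset\E_{sp}^{0}(\mathbb L)$. For the constancy of $f$: from \eqref{eq:IASspf}, $f(s,0)=Q(s,0)-\sum_k t_k\frac{\partial Q}{\partial t_k}\big|_{t=0}$, and since $Q$ is odd in $t$ we get $Q(s,0)=0$ and $\frac{\partial Q}{\partial t_k}\big|_{t=0}$ need not vanish, but $t_k=0$ kills the second term, so $f\equiv 0$ along $L$.

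For $(2)\Rightarrow(3)$: if $f$ is constant along $L$, then along the submanifold of the parameter space mapping to $L$ all tangential derivatives of $f$ vanish; by \eqref{eq:fufv}, $\omega(x_u,y)=\omega(x_v,y)=0$ there, i.e. $y$ is symplectically orthogonal to $T_xL=\mathrm{span}(x_u,x_v)$. Since $L$ is Lagrangian, its symplectic orthogonal is $T_xL$ itself, so $y\in T_xL$. On the other hand $L\subset\E_{sp}^{0}(\mathbb L)$ means $\frac{\partial^2 Q}{\partial t^2}=0$ there, which forces the tangent spaces to $\mathbb L$ and $\overline{\mathbb L}$ to be strongly parallel; combined with $y$ tangent to $L$, an argument as in the center-chord case shows the two tangent spaces actually coincide and the imaginary chord $y$ collapses, i.e. $y=0$. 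Finally $(3)\Rightarrow(1)$: $y=0$ along $L$ means, reading off \eqref{eq:IASSpGraph2}, that $t=0$ and $\frac{\partial Q}{\partial s}=0$ there; in particular the curve lies over the real locus $t=0$, and since a conjugation-invariant domain $Z$ with $Z\cap\R^n=U$ together with the holomorphic extension of $S$ is unique in the germ sense, $\mathbb L\cap\overline{\mathbb L}=L$ and $\phi=\phi_{sp}(L)$.

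I expect the main obstacle to be the step $(2)\Rightarrow(3)$, specifically the passage from ``tangent spaces strongly parallel \emph{and} $y$ tangent to $L$'' to ``$y=0$''. In the center-chord case this used the explicit graph form \eqref{eq:IASccGraph} and the identity $x(u,u)=\alpha(u)$; in the special case I would need the analogous statement that strong parallelism of $T\mathbb L$ and $T\overline{\mathbb L}$ at $\gamma(z),\bar\gamma(z)$ together with $y\in T_xL$ forces $\gamma(z)=\bar\gamma(z)$, hence $z\in\R^n$ and $y=0$. This should follow by writing everything in the $(s,t)$ coordinates of \eqref{eq:IASSpGraph2} and noting that $\frac{\partial^2Q}{\partial t^2}=0$ on an open real set, by analyticity of $Q$ in $t$ (oddness), actually constrains $Q$ to be linear in $t$ near $t=0$ only to first order — so one must instead argue directly that the vanishing of the relevant mixed second derivatives of $f$ plus the caustic condition pins down $t=0$. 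The remark analogous to Remark \ref{yinA} (choosing the canonical parametrization when $\mathbb L=\overline{\mathbb L}$ along $L$) will likely be needed to handle the case where $L$ contains an affine subspace, where $y$ could a priori be a nonzero tangent vector.
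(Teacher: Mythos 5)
Your cyclic structure $(1)\Rightarrow(2)\Rightarrow(3)\Rightarrow(1)$ is the same as the paper's, and your $(1)\Rightarrow(2)$ (oddness of $Q$ in $t$ gives $\frac{\partial^2Q}{\partial t^2}=0$ and $f(s,0)=Q(s,0)=0$) and $(3)\Rightarrow(1)$ ($y=0$ forces $t=0$ and $\frac{\partial Q}{\partial s}=0$, so $dH$ is known on the real locus and $H$ is determined up to a constant) agree with what the paper does. The problem is $(2)\Rightarrow(3)$: you first assert that ``an argument as in the center-chord case shows the two tangent spaces actually coincide and $y=0$,'' and then, in your closing paragraph, you correctly concede that this transfer does not work and that something must ``pin down $t=0$'' directly — but you never supply that something. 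As stated, the step is a genuine gap: strong parallelism of $T\mathbb{L}$ and $T\overline{\mathbb{L}}$ plus $y\in T_xL$ does not formally collapse $y$ (compare Remark \ref{yinA}, where exactly this failure occurs in the center-chord setting when $L$ contains an affine subspace), so an extra ingredient specific to the special case is required.

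The missing ingredient in the paper is the harmonicity of $Q$, i.e.\ the holomorphy of $H$. Concretely: write $L$ as $t=t(s)$ with $t(0)=0$ and differentiate \eqref{eq:IASSpGraph2}; since $\frac{\partial^2Q}{\partial t^2}=0$ along $L$ one gets $x_s=\bigl(1,\frac{\partial^2Q}{\partial s\partial t}\bigr)$, and tangency of $y=\bigl(t,\frac{\partial Q}{\partial s}\bigr)$ to $L$ forces, by comparing first components, the Euler-type relation $t\,\frac{\partial}{\partial t}\frac{\partial Q}{\partial s}=\frac{\partial Q}{\partial s}$, hence $\frac{\partial Q}{\partial s}=ct$; harmonicity then gives $\frac{\partial^2Q}{\partial s^2}=0$ and forces $t(s)\equiv 0$, i.e.\ $y=0$. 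Your instinct that ``analyticity/oddness of $Q$ in $t$'' alone is too weak is right — it is the Cauchy--Riemann constraint linking the $s$- and $t$-derivatives, not oddness, that closes the argument, and this is precisely what your sketch omits. Until that computation (or an equivalent use of holomorphy) is written out, the implication $(2)\Rightarrow(3)$ is not established.
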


\begin{proof}
\noindent
$(1)\Rightarrow(2)$ follows from the description of $\phi_{sp}(L)$ given in section \ref{sectionspecialias}.	

\smallskip\noindent	
$(2)\Rightarrow(3)$:
If $f$ is constant along $L$, then necessarily $y$ is tangent to $L$. We may assume $t=t(s)$ along $L$ with $t(0)=0$. Differentiating equation \eqref{eq:IASSpGraph2}(a), we obtain that along $L$
$$
x_s=\left( 1, \frac{\partial^2Q}{\partial t^2}t_s+\frac{\partial^2Q}{\partial s\partial t}  \right).
$$
Since $\frac{\partial^2Q}{\partial t^2}=0$ along $L$ and $y$ is tangent to $L$ we conclude  from  \eqref{eq:IASSpGraph2}(b) that
$$
t\frac{\partial}{\partial t}\frac{\partial Q}{\partial s}=\frac{\partial Q}{\partial s},
$$
which implies that $\frac{\partial Q}{\partial s}=ct$, for some constant $c$. Since $Q$ is harmonic, $\frac{\partial^2 Q}{\partial s^2}=0$, and so $t(s)=0$, which
implies $y=0$.

\smallskip\noindent
$(3)\Rightarrow(1)$:
$y=0$ implies $t=0$ and $\frac{\partial Q}{\partial s}=0$ at $L$. Thus $L$ is contained in the image of the parameterization
$\left(s,\frac{\partial Q}{\partial t}(s,0)\right)$ and $\frac{\partial Q}{\partial s}(s,0)=0$. Thus we know $\frac{\partial Q}{\partial s}$ and
$\frac{\partial Q}{\partial t}$ along the curve $t=0$. This implies that we know $dH$ at $t=0$. So we know
$H$, up to an additive constant, which implies $\phi=\phi_{sp}(L)$.
\end{proof}

\begin{remark}\label{goodpar}
	For $\phi_{sp}(L)$, the choice of parametrization  (\ref{eq:IASSpGraph}) with  $\bar{H}(z)=H(\bar{z})$ is canonical in the sense that $L\subset\mathbb L$ and $L\subset\overline{\mathbb L}$ have the same parametrization (cf. remark \ref{yinA}) and therefore this is implicitly assumed.
\end{remark}

\begin{remark}\label{LinIAS}
For both $\phi_{cc}(L)$ and $\phi_{sp}(L)$, since $f$ is constant along $L$, we can choose $f$ such that $f(L)=0$. With this canonical choice, $L\subset \tilde\E_{cc}(L)$ and $L\subset \tilde\E_{sp}(L)$, and we have lost the freedom of adding a constant to $f$.
\end{remark}

In view of the above remarks, we present the following:

\begin{definition}
	With canonical choices outlined in remarks \ref{yinA}, \ref{goodpar} and \ref{LinIAS}, we call $\phi_{cc}(L)$ and $\phi_{sp}(L)$ the two {\bf canonical IAS} obtained uniquely from $L$.
\end{definition}

\section{Description of Lagrangian/Legendrian singularities on shell for the two canonical IAS from a Lagrangian submanifold}\label{genfamilydescription}

In this section we describe $\E_{cc}$ and $\E_{sp}$ as \p{sets of critical values of} Lagrangian maps (caustics)
and $\tilde{\E}_{cc}$ and $\tilde{\E}_{sp}$
as \p{sets of critical values of} Legendrian maps.
However, \p{because our following descriptions will be 
of a local nature,} it's necessary to distinguish two different ``parts'' of each of these sets. 

The off-shell part of $\E_{cc}(L)$  is locally of the form $\E_{cc}(L^-,L^+)$, where $L^-\neq L^+$ are germs of $L$ at two distinct points in $L$, \p{but the on-shell part of $\E_{cc}(L)$ is locally of the form $\E_{cc}(L',L')$, where $L'$ is the germ of $L$ at one point in $L$. Similarly for the other sets above} 
\footnote{The local characterization of \p{the off-shell part of $\E_{cc}(L)$} and the local classification of its singularities can be found in \cite{DR}, \cite{CDR} \w{(global properties of the Wigner caustic $\E_{cc}(L)$  of planar curves are studied in \cite{DZ1}-\cite{DZ3}).}}.

\p{From now on, we are concerned} with describing and classifying the ``on-shell'' part of the sets above and their singularities. 
\p{Because our treatment will be local, it is tempting to define a germ of $\E_{cc}(L)$ on shell, for instance, as the germ of $\E_{cc}(L)$ at a point $x\in L$. However, although} in various instances the on-shell and the off-shell parts of these sets are disconnected  \footnote{\p{This is often the case for $\E_{cc}(L)$  of closed convex planar curves} \w{(for general closed planar curves the on-shell part is composed of curves connecting two inflection points \cite{DZ3})}.}, \p{there are various other instances when  the germ of $\E_{cc}(L)$ at a point $x\in L$ is the union of a germ of $\E_{cc}(L)$  on-shell and a germ of $\E_{cc}(L)$  off-shell \footnote{\p{Recall that $x\in\E_{cc}(L)$-off-shell if $x$ is the midpoint of a chord connecting distinct points $a,b\in L$ s.t. $T_aL$ and $T_bL$ are not transversal. But it may happen that $x\in L$.}}. For this reason, the most precise description of a germ of $\E_{cc}(L)$  (or $\E_{sp}(L)$, $\tilde{\E}_{cc}(L)$, $\tilde{\E}_{sp}(L)$) on-shell is the one given by Theorem \ref{precise}, which we detail below.}

\begin{notation} Because  we'll focus on the on-shell parts of $\E_{cc}(L)$, or $\E_{sp}(L)$, and  $\tilde{\E}_{cc}(L)$, or $\tilde{\E}_{sp}(L)$, the on-shell parts of these sets shall be denoted with a superscript ``s'' as $\E_{cc}^s(L)$, or $\E_{sp}^s(L)$, and  $\tilde{\E}_{cc}^s(L)$, or $\tilde{\E}_{sp}^s(L)$, respectively, 
or  simply by  $\E^s(L)$ and  $\tilde\E^s(L)$, when the kind  is not specified.
\end{notation}

\subsection{Generating functions and families}

\p{Let $x=(q,p)=(q_1,...,q_n,p_1,...,p_n)$, $y=(\dot{q},\dot{p})=(\dot{q}_1,...,\dot{q}_n,\dot{p}_1,...,\dot{p}_n)$, denote by $\pi:T\mathbb R^{2n}=\R^{2n}\times\R^{2n}\to\R^{2n}$ the canonical projection $\pi(x,y)=x$ and by $\tilde \pi:T\R^{2n}\times\R\ni(q,p,\dot{q},\dot{p},z) \mapsto(q,p,z)\in\R^{2n}\times\R$ its trivial extension.}

\p{Denote by $\omega=\sum_{i=1}^n dq_i\wedge dp_i$
the canonical symplectic form on $\R^{2n}$, by
\begin{equation}\label{Omega}
\Omega=\sum_{i=1}^n dq_i\wedge d\dot{p}_i +d\dot{q}_i \wedge dp_i
\end{equation}
the
tangential lift (\cite{Tu},\cite{GU})  of $\omega$ on $T\R^{2n}$ and by
\begin{equation}\label{theta}
\theta=dz+\sum_{i=1}^n \dot{q}_i dp_i - \dot{p}_i dq_i
\end{equation}
the contact form on $T\R^{2n}\times\R$ associated to $\Omega$ and semi-basic w.r.t. $\tilde{\pi}$}.

Let $U$ be an open subset of $\R^{2n}$. We shall denote by $\mathcal{L}$ the image of the Lagrangian immersion
$\m{L=}(x,y):U\rightarrow (T\mathbb R^{2n},\Omega)$ and
by $\tilde{\mathcal{L}}$ the image of the Legendrian immersion
$\m{\tilde{L}=}(x,y,f):U\rightarrow (T\mathbb R^{2n}\times \mathbb R,\{\theta=0\})$.
We are interested in studying  the singularities of the Lagrangian map
$\pi \circ (x,y)$ and the Legendrian map $\tilde \pi\circ (x,y,f)$, where $x=(q,p)$, $y=(\dot{q},\dot{p})$.

The main tools used for classifying Lagrangian and Legendrian singularities are the generating functions and
generating families.
A generating function of the Lagrangian submanifold $\mathcal L$ and the Legendrian submanifold $\tilde {\mathcal L}$ is a function
$$
g:\R^{n}\times\R^{n}\ni (q,\dot{q})\mapsto g(q,\dot{q})\in\R,
$$
satisfying
\begin{equation}\label{eq:defineGF1}
\mathcal L=\{ (q,p,\dot{q},\dot{p}):   \frac{\partial g}{\partial q}=\dot{p}, \frac{\partial g}{\partial \dot{q}}=p \}.
\end{equation}
and
\begin{equation}\label{eq:defineGF2}
\tilde {\mathcal L}=\{ (q,p,\dot{q},\dot{p},z): \frac{\partial g}{\partial {q}}=\dot{p}, \frac{\partial g}{\partial \dot{q}}=p, z=g(q,\dot{q})-\dot{q}\cdot p \}.
\end{equation}
A generating family of the Lagrangian map $\pi \circ L$ and the Legendrian map $\tilde \pi \circ \tilde L$ is a function
$G:\R^{n}\times\R^{2n}\ni (\beta,q,p) \mapsto G(\beta,q,p)\in \R$ satisfying
\begin{equation}\label{lag1}
\mathcal L=\{ (q,p,\dot{q},\dot{p}): \exists \beta : \frac{\partial G}{\partial \beta}=0,  \frac{\partial G}{\partial q}=\dot{p}, -\frac{\partial G}{\partial p}=\dot{q} \}.
\end{equation}
and
\begin{equation}\label{leg1}
\tilde{\mathcal L}=\{ (q,p,\dot{q},\dot{p},z): \exists \beta : \frac{\partial G}{\partial \beta}=0,  \frac{\partial G}{\partial q}=\dot{p}, -\frac{\partial G}{\partial p}=\dot{q}, z=G(\beta,q,p) \}.
\end{equation}
A generating family can be obtained from a generating function by
\begin{equation}\label{gen-fam-gen-fun}
G(\beta,q,p)=g(q,\beta)- p\cdot\beta.
\end{equation}

However, we stress that generating families are local objects, suitable for local descriptions and classifications, therefore we now focus on the generating families for $\phi_{cc}(L)$ and $\phi_{sp}(L)$ on shell,  when $(p,q)$ is in a neighborhood of \p{$L\ni 0$, in order to complete the statement of Theorem \ref{precise}}.

\medskip
\paragraph{Generating families for center-chord and special IAS on shell}

For a center-chord IAS $\phi_{cc}(L)$, where $L$ is defined by $(u, dS(u))$,
straightforward calculations show that
$$
g_{cc}^s(q,\dot{q})=\frac{1}{2}S(q+\dot{q})-\frac{1}{2}S(q-\dot{q})
$$
is a generating function on shell and so
\begin{equation}\label{eq:GenFamCC}
G_{cc}^s(\beta,q,p)=\frac{1}{2}S(q+\beta)-\frac{1}{2}S(q-\beta)-p\cdot\beta.
\end{equation}
is a generating family for $\phi_{cc}(L)$ on shell. For a special IAS defined by the holomorphic function $H$ taking $\R^n$ to $\R$, \p{by straightforward calculations}
$$
g_{sp}^s(q,\dot{q})=Q(q,\dot{q})
$$
is \p{shown to be} a generating function on shell and the generating family for $\phi_{sp}(L)$ on shell is given by
\begin{equation}\label{eq:GenFamSP}
G_{sp}^s(\beta,q,p)=Q(q,\beta)-p\cdot\beta,
\end{equation}
where $Q$  is the imaginary part of $H$.

\p{Equations (\ref{Omega})-(\ref{eq:GenFamSP}) complete the statement of Theorem \ref{precise}, whose proof follows straightforwardly from standard theory of Lagrangian and Legendrian maps. We refer to  \cite{Arnold} and subsection \ref{oddgenfamilygerm} below, for details.}

But note that both $G_{cc}^s(\beta,q,p)$ and $G_{sp}^s(\beta,q,p)$ are odd functions of $\beta$.

\begin{remark}
	This odd property of the generating families $G$ implies that the  singularities of 
	$\E^s(L)$, resp. $\tilde{\E}^s({L})$, 
	possess  a {\it hidden} $\Z_2$-symmetry, which descends from  an explicit fibred-$\Z_2$-symmetry of the Lagrangian, resp. Legendrian, submanifold ${\mathcal L}$, resp. $\tilde{\mathcal L}$, obtained from $G$ by (\ref{lag1}), resp. (\ref{leg1}). 
	\end{remark}
	
\p{Thus, in order to classify the singularities on shell of $\phi_{cc}(L)$ and $\phi_{sp}(L)$ we must consider: (i) the classification problem for  fibred-$\mathbb Z_2$-symmetric Lagrangian and Legendrian singularities; in relation with: (ii) the classification problem for generating families in the odd category.}	

\p{Part (ii) was carried out in \cite{DMR}, but there neither part (i) nor the relation between the two parts was treated. These are carried out in the next section.}

\section{Fibred-$\mathbb Z_2$-symmetric germs of  Lagrangian maps and their odd generating families}\label{singtheory}

In this section, \p{first we develop
the definition of Lagrangian stability in the context of fibred-$\mathbb Z_2$-symmetric Lagrangian map-germs (cf. Definition \ref{d3b}), by carefully} adapting the nonsymmetric treatment presented in \cite{Arnold}. Here we shall only work  in the Lagrangian setting, the extension to the Legendrian setting being straightforward because all Legendrian immersions we consider are graph-like (cf. \cite[Section 5.3]{Izu15}).

\p{Then, we detail the relation between Definition \ref{d3b} and the definition of stability of odd generating families (cf. Definition \ref{oddstablegen} and Corollary \ref{mc}). The central result for relating these two definitions is given by Theorem \ref{equivalence}, which relates the corresponding notions of equivalence (cf. Definitions \ref{d3} and \ref{foe}). Because the proof of this theorem is not too short, it has been placed in section \ref{App}, at the end of the paper.}

\p{Finally, we state, explaining its proof, the theorem which relates stability of odd generating families to their odd versality (cf. Theorem \ref{oddstablegen2} below), which will be used to classify the stable singularities on shell of the center-chord and special IAS, in the next section.}

We start by recalling basic definitions of the theory of Lagrangian singularities (cf. eg. \cite[Part III]{Arnold}) and then specialize some of these basic definitions to the fibred-$\mathbb Z_2$-symmetric context.

\subsection{Fibred-$\mathbb Z_2$-symmetric Lagrangian map-germs}

Let $M$ be a smooth (or analytic) $m$-dimensional manifold. Let $E\rightarrow M$ be a smooth (or analytic) fiber bundle over $M$.
A diffeomorphism of $E$ is {\it fibered} (or {\it fibred}) if it maps fibers to fibers.

Let $T^{\ast}M$ be the cotangent bundle of $M$ and let $\omega$ be the canonical symplectic form on $T^{\ast}M$. A smooth (or analytic) section $s:M\rightarrow T^{\ast}M$  is called Lagrangian if $s^{\ast}\omega=0$. Sections of $T^{\ast}M$ are differential $1$-forms on $M$  and it is easy to see that a section is Lagrangian iff the $1$-form is closed.
Thus, any germ of a smooth (or analytic)  Lagrangian section can be described as the differential of a smooth (or analytic) function-germ on $M$.  This function-germ is called a {\it generating function} of the germ of a Lagrangian section.

Let $\lambda$
and
$(\kappa,\lambda)=(\kappa_1,\cdots,\kappa_m,\lambda_1,\cdots,\lambda_m)$  be local coordinates on $M$ and  $T^{\ast}M$, respectively, then $\omega=\sum_{i=1}^m d\kappa_i\wedge d\lambda_i$. $(T^{\ast}M,\omega)$ with canonical projection $\pi: T^{\ast}M\ni (\kappa,\lambda) \mapsto \lambda\in M$ is a {\it Lagrangian fibre bundle}.

Let $L$ be a Lagrangian submanifold of $T^{\ast}M$ i.e. $\dim L=\dim M$ and the pullback of the symplectic form $\omega$ to $L$ vanishes.
Then $\pi|_ L:L\rightarrow  M$ is called a {\it Lagrangian map}. The
set of critical values of a Lagrangian map is called a {\it caustic}.
Let $L$ and $\tilde L$ be two Lagrangian submanifolds of $(T^{\ast}M,\omega)$:

\begin{definition}\label{d1}
	Two Lagrangian maps $\pi|_ L: L\rightarrow M$ and $\pi|_{\tilde L}:\tilde L\rightarrow M$ are {\it Lagrangian equivalent} if there exists a fibered symplectomorphism of   $(T^{\ast}M,\omega)$ mapping $L$ to $\tilde L$. A Lagrangian map is {\it stable} if every nearby Lagrangian map (in the Whitney topology) is Lagrangian equivalent to it. Likewise for germs of Lagrangian submanifolds and Lagrangian maps.
\end{definition}

We are interested in studying a special type of Lagrangian maps, this type consisting of maps which are $\mathbb Z_2$-symmetric in the fibers.

\begin{definition}\label{d2}
	A Lagrangian submanifold $L$ of $T^{\ast}M$ is {\it fibred-$\mathbb Z_2$-symmetric}  if
	for every point $(\kappa,\lambda)$ in $L$ the point $(-\kappa,\lambda)$ belongs to $L$. The Lagrangian map  $\pi|_ L:L\rightarrow  M$ is  {\it fibred-$\mathbb Z_2$-symmetric} if the Lagrangian submanifold $L$ is fibred-$\mathbb Z_2$-symmetric. Likewise for {\it fibred-$\mathbb Z_2$-symmetric} germs of Lagrangian submanifolds and Lagrangian maps.
\end{definition}
\begin{remark}
	\p{Because the Lagrangian map-germs 
		are fibred-$\mathbb Z_2$-symmetric, the $\mathbb Z_2$  symmetry is \emph{hidden} in their caustics. Thus, the  classification of caustics of fibred-$\mathbb Z_2$-symmetric Lagrangian map-germs differs from classifications of caustic-germs which are explicitly $\mathbb Z_2$-symmetric (cf. e.g. \cite{JR} for the latter).}
\end{remark}

It is easy to see that the fibers of $T^{\ast}M$ and the zero section of $T^{\ast}M$ are fibred-$\mathbb Z_2$-symmetric Lagrangian submanifolds.
We will study singularities of fibred-$\mathbb Z_2$-symmetric Lagrangian map-germs. Thus we need a Lagrangian equivalence which preserves this $\mathbb Z_2$-symmetry.
Let us denote  
$$
\zeta:T^{\ast}M\ni (\kappa,\lambda)=(-\kappa,\lambda)\in T^{\ast}M.
$$
The map $\zeta$ defines a $\mathbb Z_2$-action on $T^{\ast}M$ i.e. $\mathbb Z_2\cong\{\zeta, id_{T^{\ast}M}\}$.
\begin{definition}
	A fibred symplectomorphism   $\Phi$ of  $(T^{\ast}M,\omega)$ is {\it odd}  or {\it $\mathbb Z_2$-equivariant} if $\Phi\circ \zeta=\zeta\circ \Phi$.
\end{definition}

Since our consideration is local we may assume that $M=\mathbb R^m$.
A fibred symplectomorphism-germ $\Phi$ of  $(T^{\ast}M,\omega)$ has the form $\Phi=(\phi)^{\ast} + dG$, where $\phi:M\rightarrow M$ is a diffeomorphism-germ and $G$ is a smooth (analytic) function-germ on $M$ (see  \cite{Arnold} section 18.5).
Since $\zeta$ is the identity on the zero section of $T^{\ast}M$, odd fibred symplectomorphisms map the zero section  to itself.
This implies the following characterization of  odd fibred symplectomorphism-germs.
\begin{proposition}\label{odd-base}
	If $\Phi$ is an odd fibred symplectomorphism-germ of  $(T^{\ast}M,\omega)$ then $\Phi$ has the form $\Phi=(\phi)^{\ast}$, where $\phi:M\rightarrow M$ is a diffeomorphism-germ.
\end{proposition}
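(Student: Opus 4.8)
The plan is to start from the given normal form for a fibred symplectomorphism-germ, namely $\Phi = \phi^{\ast} + dG$, where $\phi: M \to M$ is a diffeomorphism-germ and $G$ is a function-germ on $M$, and then extract what the $\mathbb Z_2$-equivariance condition $\Phi \circ \zeta = \zeta \circ \Phi$ forces. Here $\phi^{\ast}$ denotes the cotangent lift of $\phi$, which acts on a covector $(\kappa,\lambda)$ by pulling back the fiber coordinate via $d\phi$ and moving the base point by $\phi$; concretely, $\phi^{\ast}(\kappa,\lambda) = \big((d\phi_{\lambda})^{\ast}\kappa,\ \phi(\lambda)\big)$ wait---one must be careful with the direction, but the key structural point is that $\phi^{\ast}$ is linear on each fiber, while $dG$ is the translation of each fiber by the covector $dG_{\lambda}$. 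So $\Phi(\kappa,\lambda) = \big((d\phi)^{\ast}\kappa + dG_{\phi(\lambda)},\ \phi(\lambda)\big)$ in the appropriate convention.

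Next I would simply compute both sides. Since $\zeta(\kappa,\lambda) = (-\kappa,\lambda)$, we get on one hand $\Phi(\zeta(\kappa,\lambda)) = \big(-(d\phi)^{\ast}\kappa + dG_{\phi(\lambda)},\ \phi(\lambda)\big)$, and on the other hand $\zeta(\Phi(\kappa,\lambda)) = \big(-(d\phi)^{\ast}\kappa - dG_{\phi(\lambda)},\ \phi(\lambda)\big)$. Equating the fiber components gives $dG_{\phi(\lambda)} = -dG_{\phi(\lambda)}$ for all $\lambda$, hence $dG \equiv 0$ as a germ (since $\phi$ is a diffeomorphism-germ, $\phi(\lambda)$ ranges over a neighborhood of the base point). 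Therefore $\Phi = \phi^{\ast}$, which is the claim. This is essentially a one-line computation once the normal form is in place, and the proposition is really recording the consequence of the remark that precedes it: $\zeta$ fixes the zero section pointwise, so any $\mathbb Z_2$-equivariant fibred symplectomorphism must map the zero section to itself, which kills the "additive" part $dG$ of the symplectomorphism.

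The only thing requiring a little care---and what I'd flag as the one subtle point rather than a genuine obstacle---is making precise the claim that $\Phi$ preserving the zero section forces $G$ to be (locally) constant, so that $dG = 0$. This follows because the image of the zero section under $\phi^{\ast} + dG$ is the graph of the $1$-form $dG$ (transported by $\phi$), and this graph coincides with the zero section exactly when $dG = 0$; equivalently, one reads it directly off the fiber-component computation above. One should also double-check the sign/orientation conventions in the formula $\Phi = \phi^{\ast} + dG$ as used in \cite{Arnold} section 18.5, to be sure that $\zeta$ commutes with $\phi^{\ast}$ (it does, because the cotangent lift of any diffeomorphism is fiberwise linear, hence commutes with fiberwise negation) and anticommutes with the translation-by-$dG$ part in the sense used above. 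With these conventions pinned down, the proof is complete.
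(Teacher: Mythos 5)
Your proof is correct and follows essentially the same route as the paper: both start from the decomposition $\Phi=(\phi)^{\ast}+dG$ of a fibred symplectomorphism-germ and use the $\mathbb Z_2$-equivariance to force $dG\equiv 0$, the paper phrasing this via preservation of the zero section (on which $\zeta$ acts as the identity) and you via the equivalent direct fiber-component computation. No gaps; the point you flag about conventions is harmless since either placement of the translation term yields $dG=-dG$.
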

It is easy to see that odd fibred  symplectomorphisms map fibred-$\mathbb Z_2$-symmetric Lagrangian submanifolds to fibred-$\mathbb Z_2$-symmetric Lagrangian submanifolds. Thus we can define a $\mathbb Z_2$-symmetric Lagrangian equivalence of fibred-$\mathbb Z_2$-symmetric Lagrangian map-germs in the following way:

\begin{definition}\label{d3}
	Fibred-$\mathbb Z_2$-symmetric Lagrangian map-germs $\pi|_ L: L\rightarrow M$ and $\pi|_{\tilde L}:\tilde L\rightarrow  M$ are {\it $\mathbb Z_2$-symmetrically Lagrangian equivalent} if there exists an odd fibred symplectomorphism-germ of   $(T^{\ast}M,\omega)$  mapping $L$ to $\tilde L$.
\end{definition}

\p{This induces the following natural definition:} 

\begin{definition}\label{d3b}
A fibred-$\mathbb Z_2$-symmetric Lagrangian map-germ is {\it $\mathbb Z_2$-symmetrically stable} if every nearby fibred-$\mathbb Z_2$-symmetric Lagrangian map-germ (in Whitney topology) is $\mathbb Z_2$-symmetrically Lagrangian equivalent to it.
\end{definition}

Thus, definitions \ref{d3} and \ref{d3b} specialize in a natural way the definitions of Lagrangian equivalence and stability  (cf. Definition \ref{d1}) to the context of fibred-$\mathbb Z_2$-symmetric Lagrangian map-germs.

\subsection{Odd generating family-germs}\label{oddgenfamilygerm}

Any Lagrangian submanifold-germ can be described by a generating family. In the case of fibred-$\mathbb Z_2$-symmetric Lagrangian submanifold-germs the generating family can be odd in $\beta$.
Before we prove the above statement we introduce  some preparatory definitions (see \cite[Section 19.2]{Arnold}).

Because all the following descriptions are local, we take $M=\R^m$. Then, 
a bundle  $\rho :\mathbb R^n\times \R^m\ni (\beta,\lambda)\mapsto \lambda \in \mathbb R^m$  is called an {\it auxiliary bundle}, for which the space $\mathbb R^{n+m}=\mathbb R^n\times \mathbb R^m$ is the {\it big space} and $M=\mathbb R^m$ is the {\it base}.  The cotangent bundle of the big space is  the {\it big phase space} and the cotangent bundle of the base, $\pi: T^{\ast}\mathbb R^m\rightarrow \mathbb R^m$, is  the {\it small phase space}.

The {\it mixed space} $A$  for the auxiliary bundle $\rho$ is the set of elements of the big phase space which annihilate vectors tangent to the fibers of $\rho$. The {\it mixed bundle}  is the bundle over the big space induced from the small phase space by the map $\rho$. It is easy to see that the total space of the mixed bundle $\rho^{\ast}\pi$ is $A$ and the fibers of $\rho^{\ast}\pi$ are isomorphic to the fibers of $\pi$. $A$ is also the total space of the bundle $\pi^{\ast} \rho: A\rightarrow T^{\ast} \mathbb R^{n+m}$  induced from the auxiliary bundle $\rho$ by  $\pi$.  These bundles are described by the following diagrams.

\begin{center}
	\begin{tikzcd}
		T^{\ast}\mathbb R^m  \arrow[d,"\pi"]
		& A \arrow[r, "\iota", hook] \arrow[l, "\pi^{\ast}\rho"]  \arrow[d, "\rho^{\ast}\pi" ]  & T^{\ast}\mathbb R^{n+m} \arrow[dl] \\
		\mathbb R^m
		& \mathbb R^{n+m} \arrow[l, "\rho" ]
	\end{tikzcd}
	\begin{tikzcd}
		(\kappa,\lambda)  \arrow[d,"\pi",mapsto]
		& (\kappa,\beta,\lambda) \arrow[r, "\iota",mapsto] \arrow[l, "\pi^{\ast}\rho",mapsto]  \arrow[d, "\rho^{\ast}\pi",mapsto ]  & (0,\kappa,\beta,\lambda) \arrow[dl,mapsto] \\
		\lambda
		& (\beta,\lambda) \arrow[l, "\rho",mapsto ]
	\end{tikzcd}
\end{center}

\

A Lagrangian submanifold of the big phase space is called {\it $\rho$-regular} if it is transversal to the mixed space $A$ for $\rho$. The image  of the intersection of a $\rho$-regular Lagrangian submanifold with the mixed space $A$ by the natural projection $\pi^{\ast}\rho$ to the small phase space is a Lagrangian (immersed) submanifold and every germ of a Lagrangian submanifold of the small space can be obtained by this construction from the germ of $\rho$-regular Lagrangian section of the appropriate big phase space (see  \cite[Section 19.3]{Arnold}).

A function $F$ is a generating function of the Lagrangian section $\mathcal L$ of the big phase space if  $\mathcal L$ is described by 
\begin{equation}\label{genFunctBig}
\mathcal L=\left\{(\alpha,\kappa,\beta,\lambda)\in T^{\ast}\mathbb R^{n+m}|  \  \alpha=\frac{\partial F}{\partial \beta}(\beta,\lambda) , \  \kappa=\frac {\partial F}{\partial \lambda}(\beta,\lambda)\right\}, 
\end{equation}
\p{where we use the notation $\frac {\partial F}{\partial \lambda}(\beta,\lambda)=\left(\frac {\partial F}{\partial \lambda_1}(\beta,\lambda),\cdots, \frac {\partial F}{\partial \lambda_m}(\beta,\lambda)\right)$, etc.}  
Since the mixed space $A$ is described by $\{(\alpha,\kappa,\beta,\lambda)\in T^{\ast}\mathbb R^{n+m}| \alpha=0\}$, it follows that $L=\pi^{\ast}\rho(\mathcal L\cup A)$ is described by
\begin{equation}\label{genFam}
L=\left\{(\kappa,\lambda)\in T^{\ast}M \ \big| \ \exists \ \beta , \  \ \frac{\partial F}{\partial \beta}(\beta,\lambda) = 0, \ \  \kappa=\frac {\partial F}{\partial \lambda}(\beta,\lambda)\right\}.
\end{equation}
\p{The family of generating functions $F_{\beta}(\lambda)=F(\beta,\lambda)$ is called a {\it generating family} of the Lagrangian submanifold-germ $L\subset (T^{\ast}M,\omega)$ described by (\ref{genFam}). Although somewhat counterintuitive, one usually refers to $\beta$ as the {\it variables} and to $\lambda$ as the {\it parameters} of the generating family.} 


\begin{remark}\label{map_by_F}
	The {\it set of critical points of the family} F is  the following set
	$$
	\Sigma(F)=\left\{(\beta,\lambda)\in \mathbb R^{n+m} \ \big|  \ \frac{\partial F}{\partial \beta}(\beta,\lambda)=0 \right\} \ .
	$$
	Since the Lagrangian submanifold $\mathcal L$ is $\rho$-regular, $\Sigma(F)$ is a $m$-dimensional submanifold of $\mathbb R^{n+m}$. The set of critical points of the family $F$ is naturally diffeomorphic to the germ of the Lagrangian submanifold $L$ of the small phase space determined by the germ of the generating family $F$.
	Then the  Lagrangian map-germ in terms of the generating family $F$ is described by
	$$
	\Sigma(F) \ni (\beta,\lambda)\mapsto \lambda \in \mathbb R^m.
	$$
\end{remark}

\begin{proposition}\label{thisodd}
	If a Lagrangian submanifold-germ $L$ of $T^{\ast}M$  is fibred-$\mathbb Z_2$-symmetric then there exists a local generating family $F=F(\beta,\lambda)$ which is odd (in variables) i.e. $F(-\beta,\lambda)\equiv -F(\beta,\lambda)$.
\end{proposition}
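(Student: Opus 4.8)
The plan is to start with an arbitrary generating family $\tilde F(\beta,\lambda)$ of $L$ — which exists by the general construction recalled above — and then symmetrize it, using the fibred-$\mathbb{Z}_2$-symmetry of $L$ together with the fact that two generating families of the same Lagrangian submanifold-germ are related by a (fibred, $\beta$-level-preserving over $\lambda$) equivalence. The key observation is that the $\mathbb{Z}_2$-action $\zeta_{\text{big}}:(\beta,\lambda)\mapsto(-\beta,\lambda)$ on the big space $\mathbb{R}^{n+m}$ lifts to the big phase space $T^{\ast}\mathbb{R}^{n+m}$, and the hypothesis that $L=\pi^{\ast}\rho(\mathcal{L}\cap A)$ is fibred-$\mathbb{Z}_2$-symmetric in the small phase space forces the pulled-back Lagrangian section $\mathcal{L}$ (hence its generating function) to be compatible with this action up to the ambiguity in choosing a generating function.

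First I would make precise what "the generating family is essentially unique" means here: if $\tilde F$ is any generating function for a $\rho$-regular Lagrangian \emph{section} $\mathcal{L}$ of the big phase space, then $L$ in (\ref{genFam}) determines $\mathcal{L}$ only up to $\rho$-fibre-preserving symplectomorphisms, and at the level of generating functions this corresponds to $\tilde F \mapsto \tilde F\circ(\text{fibre diffeo over }\lambda) + (\text{function of }\lambda\text{ alone})$; in particular, $\tilde F(\beta,\lambda)$ and $-\tilde F(-\beta,\lambda)$ both generate the \emph{same} $L$ because applying $\zeta_{\text{big}}$ and negating sends $\Sigma(\tilde F)$ to $\Sigma$ of the new function while the map to $\kappa=\partial_\lambda\tilde F$ is preserved precisely when $L$ is fibred-$\mathbb{Z}_2$-symmetric. (This is the step where Definition \ref{d2} enters: one checks $(\kappa,\lambda)\in L \iff (\kappa,\lambda)\in L$ after the substitution, using that $(-\beta,\lambda)$ is a critical point of the transformed family exactly when $(\beta,\lambda)$ is a critical point of $\tilde F$, and the $\kappa$-component $\partial_\lambda(-\tilde F(-\beta,\lambda)) = -\partial_\lambda\tilde F(-\beta,\lambda)$ matches the $\mathbb{Z}_2$-orbit structure of $L$.) Then $F(\beta,\lambda):=\tfrac12\big(\tilde F(\beta,\lambda)-\tilde F(-\beta,\lambda)\big)$ is odd in $\beta$ by construction, and I would verify that it still generates $L$: its critical set $\Sigma(F)$ and the induced map $\lambda\mapsto \partial_\lambda F$ agree with those of $\tilde F$ on the relevant germ, essentially because the "average" of two generating functions of the same $L$ is again a generating function of $L$ — this uses $\rho$-regularity (so $\Sigma$ is a smooth $m$-manifold near the base point) and the uniqueness-up-to-equivalence statement to ensure the critical sets coincide, not merely project to the same caustic.

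The main obstacle I anticipate is the bookkeeping in that last verification: one must be careful that $\tfrac12(\tilde F(\beta,\lambda)-\tilde F(-\beta,\lambda))$ genuinely has the \emph{same} critical manifold germ as $\tilde F$ and induces the \emph{same} Lagrangian germ $L$, rather than just a Lagrangian germ with the same image under $\pi$. The honest way to handle this is to work with the ambiguity group of generating functions explicitly: $\tilde F$ and $-\tilde F(-\beta,\lambda)$ differ by an element of this group (a fibred $\lambda$-diffeomorphism of $\beta$ plus an additive function of $\lambda$), hence their half-sum lies in the same orbit — but since the group action is by composition and addition, averaging does not literally land back in the orbit unless one argues at the level of the section $\mathcal{L}$, i.e. one shows the two sections $\mathcal{L}$ and $\zeta_{\text{big}}^{\ast}\mathcal{L}$ (suitably signed) coincide after a fibred symplectomorphism fixing $L$, and then the $\mathbb{Z}_2$-averaging is performed on $\mathcal{L}$ itself before reading off a generating function. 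I would therefore phrase the argument on the phase-space side: show $\zeta_{\text{big}}$ induces, via the fibred-$\mathbb{Z}_2$-symmetry of $L$, an automorphism of the pair $(\mathcal{L}, A)$ covering $\zeta_{\text{big}}$ on $\mathbb{R}^{n+m}$; average a generating function of $\mathcal{L}$ over this order-two group to get an odd one; and conclude by (\ref{genFunctBig})–(\ref{genFam}) that the odd function generates $L$. This keeps the proof short and avoids the false step of averaging inside an orbit of a non-linear group.
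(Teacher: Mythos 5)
Your proposal contains a genuine gap at its central step. You correctly diagnose that the naive average $\tfrac12\bigl(\tilde F(\beta,\lambda)-\tilde F(-\beta,\lambda)\bigr)$ need not generate $L$, because the generating families of a fixed Lagrangian germ form a single orbit of a \emph{nonlinear} group (fibred diffeomorphisms in $\beta$ over $\lambda$, plus additive constants), and orbits of such groups are not closed under convex combinations. But the fix you propose does not close the gap: the claim that ``$\zeta_{\text{big}}$ induces an automorphism of the pair $(\mathcal L,A)$'' is false for an arbitrary choice of generating family. The fibred-$\mathbb Z_2$-symmetry of $L$ only tells you that the signed pullback of $\mathcal L$ is \emph{another} $\rho$-regular section with the \emph{same} reduction $L$; it does not fix $\mathcal L$ itself. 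If $\mathcal L$ were already invariant, then $d\bigl(\tilde F(\beta,\lambda)+\tilde F(-\beta,\lambda)\bigr)\equiv 0$ and your averaging would be vacuous ($\tilde F$ is odd up to a constant); if it is not invariant, there is no defined operation of ``averaging $\mathcal L$ itself,'' and producing an invariant section is precisely the content of the proposition. Making your route work would require first invoking the uniqueness theorem for generating families to get a fibred diffeomorphism $\Psi$ with $-\tilde F(-\beta,\lambda)=\tilde F(\Psi(\beta,\lambda))$, and then linearizing the resulting twisted involution $\tau(\beta,\lambda)=\Psi(-\beta,\lambda)$ fibredly (after checking it \emph{is} an involution, which $\Psi$ as produced by the homotopy method need not guarantee) -- none of which is supplied. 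A simple example showing the danger: for $L$ the zero section and $\tilde F(\beta,\lambda)=(\beta-a(\lambda))^2$, the average $\tfrac12\bigl(\tilde F(\beta,\lambda)-\tilde F(-\beta,\lambda)\bigr)=-2a(\lambda)\beta$ generates the empty germ wherever $a\neq 0$.

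The paper avoids this entirely by symmetrizing a \emph{rigid} object instead of a generating family: it takes the classical mixed-coordinate generating function $S(\kappa_J,\lambda_I)$ of $L$ (Arnold, Section 18.3), which is unique up to an additive constant once the splitting $I\sqcup J=\{1,\dots,m\}$ is fixed. The condition $(\kappa,\lambda)\in L\Rightarrow(-\kappa,\lambda)\in L$ then translates directly into parity identities for $\partial S/\partial\kappa_J$ (even in $\kappa_J$) and $\partial S/\partial\lambda_I$ (odd in $\kappa_J$), which force $d\bigl(S(\kappa_J,\lambda_I)+S(-\kappa_J,\lambda_I)\bigr)\equiv 0$; normalizing $S(0,0)=0$ makes $S$ odd, and $F(\beta,\lambda)=S(\beta,\lambda_I)+\langle\beta,\lambda_J\rangle$ is then a manifestly odd generating family for $L$. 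The lesson is that the symmetrization must be performed on a generating object attached to $L$ essentially uniquely; the auxiliary family $\tilde F$ is too floppy for the symmetry of $L$ to descend to it without further work.
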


\begin{proof}
	We use the method described in \cite{Arnold} (see  Example 6 in Section 18.3 and Section 19.3 C). There exist subsets  $J=\{j_1,\cdots,j_n\}$, $I=\{i_1,\cdots,i_{m-n}\}$ of $\{1,\cdots, m\}$ such that $I\cap J=\emptyset$ and $I\cup J=\{1,\cdots,m\}$  and a local  generating function $S=S(\kappa_J,\lambda_I)$ of $L$, where $\kappa_J=(\kappa_{j_1},\cdots,\kappa_{j_n})$ and $\lambda_I=(\lambda_{i_1},\cdots,\lambda_{i_{m-n}})$.
	Then, $L$ is locally described by 
	\begin{equation}\label{genFunct}
	L=\left\{(\kappa,\lambda)\in T^{\ast}M| \ \lambda_J=-\frac {\partial S}{\partial \kappa_J}(\kappa_J,\lambda_I), \  \kappa_I=\frac {\partial S}{\partial \lambda_I}(\kappa_J,\lambda_I) \right\}.
	\end{equation}
	
	Since $L$  is fibred-$\mathbb Z_2$-symmetric, if $(\kappa,\lambda)\in L$ then $(-\kappa,\lambda)\in L$. Hence, if
	$\lambda_J=-\frac {\partial S}{\partial \kappa_J}(\kappa_J,\lambda_I), \kappa_I=\frac {\partial S}{\partial \lambda_I}(\kappa_J,\lambda_I)$ then
	$\lambda_J=-\frac {\partial S}{\partial \kappa_J}(-\kappa_J,\lambda_I),- \kappa_I=\frac {\partial S}{\partial \lambda_I}(-\kappa_J,\lambda_I)$.  Thus we get
	\begin{equation}\label{S_odd}
	\frac {\partial S}{\partial \kappa_J}(-\kappa_J,\lambda_I)\equiv \frac {\partial S}{\partial \kappa_J}(\kappa_J,\lambda_I), \ \ \
	\frac {\partial S}{\partial \lambda_I}(-\kappa_J,\lambda_I)\equiv -\frac {\partial S}{\partial \lambda_I}(-\kappa_J,\lambda_I).
	\end{equation}
	
	The generating function-germ is determined up to an additive constant. So we may assume that $S(0,0)=0$. From (\ref{S_odd}) we obtain that $S=S(\kappa_J,\lambda_I)$ is an odd function-germ in $\kappa_J$.
	Consider a function-germ on a big space $\mathbb R^{n+m}$ of the form $F(\beta,\lambda)\equiv S(\beta, \lambda_I)+\langle\beta,\lambda_J\rangle$, where $\langle\cdot,\cdot\rangle$ is the dot product.
	Then $F=F(\beta,\lambda)$ is odd in $\beta$. It is easy to see that $F$ is a generating function of a Lagrangian section $\mathcal L$ of the big phase space $T^{\ast}\mathbb R^{n+m}$ described by (\ref{genFunctBig}) and $\mathcal L$ is $\rho$-regular. The set $\pi^{\ast}\rho(\mathcal L\cup A)$ is exactly $L$. Indeed, $\frac{\partial F}{\partial \beta}(\beta,\lambda)\equiv \frac {\partial S}{\partial \kappa_J}(\beta,\lambda_I)+\lambda_J$,
	$\frac{\partial F}{\partial \lambda_I}(\beta,\lambda)\equiv \frac {\partial S}{\partial \lambda_I}(\beta,\lambda_I)$ and  $\frac{\partial F}{\partial \lambda_J}(\beta,\lambda)\equiv \beta$.
	Thus, by (\ref{genFunct}) $L$ is locally described by (\ref{genFam}).
\end{proof}
\begin{remark}\label{pathological}
	We can choose such sets $J, \ I$ such that $n=\sharp J$ is the dimension of the kernel of the \m{differential of the} Lagrangian map $L\hookrightarrow T^{\ast}M \rightarrow M$. The Lagrangian map-germ  is described  in terms of $S$ in the following way
	$$
	\mathbb R^m \ni (\kappa_J, \lambda_I)\mapsto (-\frac {\partial S}{\partial \kappa_J}(\kappa_J,\lambda_I),\lambda_I).
	$$
	The coordinates $\lambda_J$ and $\kappa_J$ are called {\it pathological}. The arguments $\kappa_J$ are $n$ {\it pathological arguments}  of the function $S$.
\end{remark}

\subsection{$\mathbb Z_2$-symmetric Lagrangian stability and $\RR^{odd}$-versality}

From classical results (cf. \cite[Section 19.4]{Arnold}), in the non-symmetric context we know that Lagrangian equivalence of Lagrangian map-germs corresponds to stably fibred $\mathcal R^+$-equivalence of their generating families. 

\w{ We recall that to two germs of generating families $F, G: (\mathbb R^n\times \mathbb R^m,0)\rightarrow \mathbb R$ are fibred $\mathcal R^+$-equivalent if there is a fibred diffeomorphism-germ  $ \Psi(\beta,\lambda)\equiv (\Phi(\beta,\lambda),\Lambda(\lambda))$ and a function-germ $h:(\mathbb R^m,0) \rightarrow \mathbb R $ such that $F(\beta,\lambda)\equiv G(\Phi(\beta,\lambda),\Lambda(\lambda))+h(\lambda)$. Then, $F: (\mathbb R^k\times \mathbb R^m,0)\rightarrow \mathbb R$ and  $ G: (\mathbb R^l\times \mathbb R^m,0)\rightarrow \mathbb R$, with $k\ne l$, are stably  fibred $\mathcal R^+$-equivalent if there exist nondegenerate quadratic forms $Q_i:\mathbb R^{r_i}\rightarrow \mathbb R$ for $i=1,2$, s.t. $k+r_1=l+r_2=n$ and $F+Q_1, G+Q_2: (\mathbb R^{n}\times \mathbb R^m,0)\rightarrow \mathbb R$ are  fibred $\mathcal R^+$-equivalent.}

\w{In $\mathbb Z_2$-symmetric  context, the zero-section is preserved by $\mathbb Z_2$-symmetric Lagrangian equivalence and quadratic forms are not odd functions. Then},   
denote the group of diffeomorphism-germs  $(\mathbb R^n\times \mathbb R^m,0)\rightarrow (\mathbb R^n\times \mathbb R^m,0)$ by $\mathcal D_{n+m}$  and let
$\mathcal D_n^{odd}$ denote the subgroup of odd   diffeomorphism-germs  $(\mathbb R^n,0)\rightarrow (\mathbb R^n,0)$ i.e. $\Phi\in \mathcal D_n^{odd}$
if $\Phi(-\beta)\equiv-\Phi(\beta)$.

\begin{definition}\label{foe}\label{oe}
	Odd generating family-germs $F, G$ of fibred-$\mathbb Z_2$-symmetric Lagrangian submanifold-germs are {\em fibred  $\mathcal R^{odd}$-equivalent} if
	there exists an odd (in variables) fibred diffeomorphism-germ $\Psi \in \mathcal
	D_{n+m}$, that is, 
	$$ \Psi(\beta,\lambda)\equiv
	(\Phi(\beta,\lambda),\Lambda(\lambda)) \ , \ \text{with} \ \Phi|_{\mathbb R^n\times
		\{\lambda\}} \in \mathcal D_n^{odd} \  , \  \forall\lambda\in\R^m ,$$ such that
	\begin{equation}\label{GoPsi}
	F=G\circ \Psi.
	\end{equation}
	\end{definition}
\begin{remark}
In the notation of section 	\ref{genfamilydescription}, equation (\ref{GoPsi}) can be written as
$$
G(\beta,x)=\bar{G}(\bar\beta,\bar{x}) 
$$
(parameters $\lambda=x=(q,p)\in\R^{2n}=\R^m$), w.r.t. an odd fibred diffeomorphism-germ  denoted as  
$$
(\beta,{x})= \left(  \beta(\bar\beta,\bar{x}), x(\bar{x})   \right) \ .
$$
\end{remark}

\begin{proposition}
	If the generating families $G$ and $\bar{G}$ are fibred $\RR^{odd}$-equivalent, then the caustics $\E^s(L)$ and $\E^s(\bar{L})$ are diffeomorphic.
\end{proposition}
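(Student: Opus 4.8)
The plan is to prove the statement directly at the level of generating families, using Remark~\ref{map_by_F} in order to bypass the language of Lagrangian equivalence altogether. By Theorem~\ref{precise} together with Remark~\ref{map_by_F}, the germ at $0$ of $\E^s(L)$ is the set of critical values of the map-germ $q_G\colon\Sigma(G)\to\R^{2n}$, $(\beta,x)\mapsto x$, where $x=(q,p)$ and $\Sigma(G)=\{(\beta,x):\partial_\beta G(\beta,x)=0\}$ (a $2n$-dimensional manifold-germ by $\rho$-regularity of the family); likewise $\E^s(\bar L)$ is the set of critical values of $q_{\bar G}\colon\Sigma(\bar G)\to\R^{2n}$, $(\bar\beta,\bar x)\mapsto\bar x$. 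So it suffices to produce a diffeomorphism-germ of $(\R^{2n},0)$ carrying one of these sets of critical values onto the other.

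First I would unwind Definition~\ref{foe}: saying that $G$ and $\bar G$ are fibred $\RR^{odd}$-equivalent means $G=\bar G\circ\Psi$ for a fibred diffeomorphism-germ $\Psi(\beta,x)=(\Phi(\beta,x),\Lambda(x))$, with $\Lambda$ a diffeomorphism-germ of $(\R^{2n},0)$ and $\Phi(\cdot,x)$ an odd diffeomorphism-germ of $(\R^n,0)$ for each fixed $x$; in particular $\partial_\beta\Phi$ is everywhere invertible (equivalently, the Jacobian of $\Psi$ is block triangular with invertible diagonal blocks $\partial_\beta\Phi$ and $\partial_x\Lambda$). The chain rule gives $\partial_\beta G(\beta,x)=\big(\partial_{\bar\beta}\bar G\big)\!\big(\Psi(\beta,x)\big)\cdot\partial_\beta\Phi(\beta,x)$, so $\partial_\beta G(\beta,x)=0$ if and only if $\partial_{\bar\beta}\bar G$ vanishes at $\Psi(\beta,x)$. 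Hence $\Psi$ restricts to a diffeomorphism-germ $\Sigma(G)\to\Sigma(\bar G)$.

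Next, since the base component of $\Psi$ depends only on $x$, the square with vertical arrows $q_G,\,q_{\bar G}$ and horizontal arrows $\Psi|_{\Sigma(G)}$ (top) and $\Lambda$ (bottom) commutes, i.e. $q_{\bar G}\circ\Psi|_{\Sigma(G)}=\Lambda\circ q_G$. Differentiating this identity and using that $d\Psi|_{\Sigma(G)}$ and $d\Lambda$ are isomorphisms shows that $(\beta,x)\in\Sigma(G)$ is a critical point of $q_G$ exactly when $\Psi(\beta,x)$ is a critical point of $q_{\bar G}$. Applying $q_{\bar G}$ to both descriptions of the critical locus and invoking the commuting square then yields $\E^s(\bar L)=\Lambda\big(\E^s(L)\big)$ as set-germs at $0\in\R^{2n}$; since $\Lambda$ is a diffeomorphism-germ of $(\R^{2n},0)$, the two caustic-germs are diffeomorphic, in fact via the restriction of $\Lambda$.

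I expect no genuine obstacle here; the argument is essentially the classical passage from fibred equivalence of generating families to equivalence of Lagrangian maps, adapted to the germ setting. The two points that require care are (a) the translation in the first paragraph, namely invoking Remark~\ref{map_by_F} (and Theorem~\ref{precise}) to replace the abstract Lagrangian map $\pi|_{\mathcal L}$ by the concrete projection $q_G$ on $\Sigma(G)$, and (b) recording that the oddness built into an $\RR^{odd}$-equivalence is never used in this particular statement — only the fibred structure of $\Psi$ and the identity $G=\bar G\circ\Psi$ enter — so that the same computation shows more generally that fibred $\RR$-equivalent generating families have diffeomorphic caustics, in agreement with the nonsymmetric theory of~\cite{Arnold}. (The analogous statement for the Legendrian sets $\tilde\E^s$ follows identically, replacing (\ref{lag1}) by (\ref{leg1}).)
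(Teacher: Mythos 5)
Your proof is correct and follows essentially the same route as the paper's: the chain rule shows that the fibred diffeomorphism carries the critical locus of $G$ to that of $\bar G$, and the base diffeomorphism ($\Lambda$, the paper's $x(\bar x)$) then carries $\E^s(L)$ onto $\E^s(\bar L)$. Your phrasing of the degeneracy condition via the rank of $dq_G$ on $\Sigma(G)$ is a slightly cleaner way of stating what the paper writes as $\frac{\partial^2 G}{\partial\beta^2}=\frac{\partial G}{\partial\beta}=0$, and your observation that oddness plays no role here is accurate.
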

\begin{proof}
	Since
	$$
	\frac{\partial G}{\partial \beta}=\frac{\partial \bar{G}}{\partial \bar\beta}\frac{\partial \bar\beta}{\partial \beta}
	$$
	we conclude that $\frac{\partial G}{\partial \beta}=0$ if and only if $\frac{\partial \bar{G}}{\partial \bar\beta}=0$. Moreover
	$$
	\frac{\partial^2G}{\partial \beta^2}=\frac{\partial^2 \bar{G}}{\partial \bar\beta^2}\left(\frac{\partial \bar\beta}{\partial \beta}\right)^2+\frac{\partial \bar{G}}{\partial \bar\beta}\frac{\partial^2 \bar\beta}{\partial \beta^2}.
	$$
	Thus $\frac{\partial^2 G}{\partial \beta^2}=\frac{\partial G}{\partial \beta}=0$ if and only if $\frac{\partial^2 \bar{G}}{\partial \bar\beta^2}=\frac{\partial \bar{G}}{\partial \bar\beta}=0$.
	Finally, observe that the diffeomorphism $x:\R^{2n}\to\R^{2n}$ takes $\E^s(L)$ to $\E^s(\bar{L})$.
\end{proof}

\p{The following definition is fundamental.}

\begin{definition}\label{oddstablegen} A generating family $G:\left( \R^{n}\times \R^{m}, (\beta_0,\lambda_0) \right)\to\R$ is {\it $\RR^{odd}$-stable} if, for any
	representative $G':V\to\R$ of $G$,  there exists a neighborhood $W$ of $G'$ in the $C^{\infty}$-topology (Whitney)
	and a neighborhood $V$ of $(\beta_0,\lambda_0)$ such that for any generating family
	$\bar{G}'\in W$, there exists $(\bar\beta_0,\bar{\lambda}_0)\in V$ such that $G$ and $\bar{G}$ are (fibred) $\RR^{odd}$-equivalent, $\bar{G}$
	being the germ of $\bar{G}'$ at $(\bar\beta_0,\bar{\lambda}_0)$.
\end{definition}

\p{And the following definition is suitable for computations}.

 \begin{notation}
\w{ Let $\EE_k$ be the ring of smooth function-germs $ ( \R^m,0) \to \R$.
We denote by $\EE_{n+m}^{n-even}$ the ring of smooth function-germs $f : (\R^n\times \R^m,0) \to \R$ such that $f(-\beta,\lambda)\equiv f(\beta,\lambda)$ ($f$ is even in $\beta$), by $\EE_{n+m}^{n-odd}$ the set of odd smooth function-germs $g : (\R^n, 0) \to (\R, 0)$ such that $g(-\beta,\lambda)\equiv -g(\beta,\lambda)$ ($g$ is odd in $\beta$), which has a module structure over $\EE_{n+m}^{n-even}$.}
 \end{notation}
 
 \begin{definition}\label{infstab}
 	\w{A generating family $G:\left( \R^{n}\times \R^{m}, 0) \right)\to\R$ is \emph{infinitesimally $\RR^{odd}$-stable} if
  \begin{equation}\label{inf-stab}
 	\EE_{n+m}^{n-odd} =\EE_{n+m}^{n-even}\left\{  \beta_j \frac{\partial G}{\partial \beta_i},  i,j=1...n \right\}+\EE_k\left\{ \frac{\partial G}{\partial \lambda_l}|_{\R^n\times\{0\}}, l=1...m \right\}
 \end{equation}} 
 \end{definition}

We now have the following main result, whose  (not too short) proof is presented  at the end of the paper, in Section 6.

\begin{theorem}\label{equivalence}
	Fibred-$\mathbb Z_2$-symmetric Lagrangian map-germs are  $\mathbb Z_2$-symmetrically Lagrangian equivalent (cf. Definition \ref{d3}) if and only if their odd generating families are  fibred  $\mathcal R^{odd}$-equivalent (cf. Definition \ref{foe}).
\end{theorem}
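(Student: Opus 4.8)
The plan is to adapt the classical correspondence between Lagrangian equivalence and stably fibred $\mathcal R^+$-equivalence of generating families (\cite[Section 19.4]{Arnold}) to the $\mathbb Z_2$-equivariant setting, keeping careful track of how the symmetry $\zeta$ interacts with every step. First I would establish the easy direction: suppose $F$ and $G$ are fibred $\mathcal R^{odd}$-equivalent via an odd fibred diffeomorphism-germ $\Psi(\beta,\lambda)=(\Phi(\beta,\lambda),\Lambda(\lambda))$. The construction in the proof of Proposition \ref{thisodd} (together with the standard machinery of $\rho$-regular Lagrangian sections) shows that $\mathcal R^{odd}$-equivalent odd generating families determine $\rho$-regular Lagrangian sections of the big phase space related by a fibred symplectomorphism that commutes with the fibrewise $\mathbb Z_2$-action; pushing forward by $\pi^\ast\rho$ and using Proposition \ref{odd-base} (which forces the base map of an odd fibred symplectomorphism to be a genuine cotangent lift $(\phi)^\ast$), one gets an odd fibred symplectomorphism of $(T^\ast M,\omega)$ carrying $L$ to $\tilde L$, i.e.\ $\mathbb Z_2$-symmetric Lagrangian equivalence. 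The key point here is that no additive function $h(\lambda)$ appears — because odd generating families vanish on the zero section — so the symplectomorphism produced is exactly $(\phi)^\ast$ with no $dG$ term, consistent with Proposition \ref{odd-base}.

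For the converse, the plan is: given an odd fibred symplectomorphism $\Phi=(\phi)^\ast$ of $(T^\ast M,\omega)$ with $\Phi(L)=\tilde L$, produce odd generating families for $L$ and $\tilde L$ and an odd fibred diffeomorphism relating them. I would first invoke Proposition \ref{thisodd} to fix odd generating families $F$ for $L$ and $\tilde F$ for $\tilde L$; then I must show that, after stabilization by nondegenerate quadratic forms (which must themselves be handled in the even-variable category so that the total family stays odd — this is where the module structure of $\EE^{n-odd}_{n+m}$ over $\EE^{n-even}_{n+m}$ enters), the families $F$ and $\tilde F\circ(\text{lift of }\phi)$ differ by a fibred change of the $\beta$-variables that can be chosen odd. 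The argument is a relative/parametrized version of the uniqueness of generating functions: two odd generating families for the same fibred-$\mathbb Z_2$-symmetric Lagrangian section are related by a fibred diffeomorphism of the big space, and one shows this diffeomorphism can be averaged/symmetrized over the $\mathbb Z_2$-action to be taken odd. Concretely, if $\Psi_0$ is any fibred diffeomorphism with $F=\tilde F\circ\Psi_0$, then since both $F$ and $\tilde F$ are odd, $\zeta\circ\Psi_0\circ\zeta$ also works, and a standard Moser-type interpolation (or direct averaging when the linear parts already match) between $\Psi_0$ and $\zeta\circ\Psi_0\circ\zeta$ inside the group of fibred diffeomorphisms yields an odd $\Psi$; one must check the interpolation stays within diffeomorphisms and preserves the generating-family relation, which is where the transversality ($\rho$-regularity) hypothesis is used.

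The main obstacle I expect is precisely this averaging/symmetrization step in the converse: one needs that the set of fibred diffeomorphisms $\Psi$ realizing $F=\tilde F\circ\Psi$ is nonempty, connected (or at least $\mathbb Z_2$-stable in a way that permits choosing a fixed point), and that the fixed-point set of the involution $\Psi\mapsto\zeta\circ\Psi\circ\zeta$ on this set is nonempty — equivalently, that the obstruction to oddness vanishes. In the smooth category this is typically resolved by a homotopy/path-lifting argument combined with the implicit/preparation theorem; care is needed because the number of $\beta$-variables on the two sides may differ, so one stabilizes first, and the stabilizing quadratic forms must be split off equivariantly (the $\mathbb Z_2$-action on the quadratic block being trivial, i.e.\ the block is even in its own variables, so it does not disturb oddness in $\beta$). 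A secondary technical point is ensuring that the correspondence between odd generating families and fibred-$\mathbb Z_2$-symmetric Lagrangian sections is compatible with the reduction $\pi^\ast\rho(\mathcal L\cap A)$, i.e.\ that $\rho$-regularity is preserved under the odd equivalences; this follows from the diagrams in subsection \ref{oddgenfamilygerm} but should be stated explicitly. Once these are in place, the theorem follows by combining the two directions.
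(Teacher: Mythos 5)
Your forward direction is essentially the paper's: the odd fibred diffeomorphism of the big space induces a Lagrangian equivalence of the small phase space given by the cotangent lift $(\Lambda^{-1})^{\ast}$, which is linear, hence odd, in the fibres. The converse, however, contains two genuine problems. First, the stabilization step you propose is both unnecessary and incompatible with the odd category: a nondegenerate quadratic form $Q(\gamma)$ is \emph{even} in $\gamma$, so $F(\beta,\lambda)+Q(\gamma)$ is not odd in the full set of fibre variables $(\beta,\gamma)$, and Definition \ref{foe} deliberately contains no stabilization and no additive term $h(\lambda)$ precisely for this reason. More importantly, stabilization is never needed here: oddness forces $\frac{\partial^2 F}{\partial\beta^2}(0,\lambda)\equiv 0$, so the number of $\beta$-variables of \emph{any} odd generating family automatically equals the corank of the Lagrangian map-germ, and as a by-product $\det\frac{\partial^2F}{\partial\beta\,\partial\lambda_J}(0,0)\neq 0$. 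You never isolate this fact, yet it is the structural input that makes the whole uniqueness argument work (and that reconciles the variable counts you worry about).

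Second, your symmetrization of an arbitrary fibred diffeomorphism $\Psi_0$ with $F=\tilde F\circ\Psi_0$ --- interpolating between $\Psi_0$ and $\zeta\circ\Psi_0\circ\zeta$ and seeking a fixed point of the involution --- is exactly the step you yourself flag as ``the main obstacle,'' and you do not resolve it: the set of such $\Psi$ is neither a group nor a convex set, so ``direct averaging'' has no meaning, and the vanishing of the obstruction is precisely what has to be proved. The paper sidesteps this entirely. It first reduces (Lemma \ref{to_special}) to \emph{special} odd generating families via the odd fibred diffeomorphism $(\beta,\lambda)\mapsto\bigl(\frac{\partial F}{\partial\lambda_J}(\beta,\lambda),\lambda\bigr)$, invertible by the nondegeneracy above; it then runs the homotopy method on $F_t=F_0+t(F_1-F_0)$, producing the homological vector field $\xi$ by an explicit integral formula that makes it odd in $\beta$ \emph{by construction} and zero on $\Sigma$, and deduces oddness of the flow $\Psi_t$ from uniqueness of solutions of the ODE (both $t\mapsto\Psi_t(-\beta,\lambda)$ and the fibrewise negative of $t\mapsto\Psi_t(\beta,\lambda)$ solve the same initial value problem). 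In short, the equivariance is built into the Moser vector field rather than imposed a posteriori on the diffeomorphism; that is the idea missing from your plan, and without it the converse direction does not close.
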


As a direct consequence of  Theorem \ref{equivalence} we have the following:

\begin{corollary}\label{mc} A  fibred-$\mathbb Z_2$-symmetric Lagrangian map-germ is $\mathbb Z_2$-symmetrically Lagrangian stable (cf. Definition \ref{d3b}) if and only if its odd generating family is {\it $\RR^{odd}$-stable} (cf. Definition \ref{oddstablegen}).
\end{corollary}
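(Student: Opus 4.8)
The plan is to deduce the Corollary directly from Theorem \ref{equivalence}, combined with the standard dictionary between fibred-$\mathbb Z_2$-symmetric Lagrangian submanifold-germs and odd generating family-germs: Proposition \ref{thisodd} in one direction, formula (\ref{genFam}) (equivalently (\ref{lag1})) in the other. The only substantive thing to establish is that this dictionary is \emph{bicontinuous} for the Whitney $C^\infty$-topology, i.e. that Whitney-small perturbations of an odd generating family correspond to Whitney-small perturbations of the associated fibred-$\mathbb Z_2$-symmetric Lagrangian map-germ and vice versa. Once that is in hand, the coincidence of the two stability notions is a formal matter: the statement ``every nearby object is equivalent to the original'' on one side of the dictionary is translated, term by term, into the same statement on the other side by means of Theorem \ref{equivalence}.

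First I would treat the two halves of the dictionary. For the easy half: if $\bar G$ is an odd generating family Whitney-close to $G$, then the submanifold-germ $\bar{\mathcal L}$ defined from $\bar G$ by (\ref{genFam}) depends continuously, together with its relevant derivatives, on $\bar G$, hence is Whitney-close to $\mathcal L$; and $\bar{\mathcal L}$ is fibred-$\mathbb Z_2$-symmetric because $\partial\bar G/\partial\beta$ is even in $\beta$ while $\kappa=\partial\bar G/\partial\lambda$ is odd in $\beta$, so a point of $\bar{\mathcal L}$ witnessed by $\beta$ has its $\zeta$-image witnessed by $-\beta$. For the harder half: given a fibred-$\mathbb Z_2$-symmetric Lagrangian submanifold-germ $\bar{\mathcal L}$ Whitney-close to $\mathcal L$, I would rerun the construction in the proof of Proposition \ref{thisodd} keeping the \emph{same} index partition $I,J$ as for $\mathcal L$ --- legitimate for $\bar{\mathcal L}$ near $\mathcal L$ since ``the projection to the $(\kappa_J,\lambda_I)$-space is a local diffeomorphism'' is an open condition --- producing a generating function $\bar S(\kappa_J,\lambda_I)$, automatically odd in $\kappa_J$ and depending continuously on $\bar{\mathcal L}$, hence an odd generating family $\bar G(\beta,\lambda)=\bar S(\beta,\lambda_I)+\langle\beta,\lambda_J\rangle$ Whitney-close to $G$.

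With the dictionary available I would close the argument in both directions. If $G$ is $\RR^{odd}$-stable (Definition \ref{oddstablegen}) and $\pi|_{\bar{\mathcal L}}$ is a fibred-$\mathbb Z_2$-symmetric Lagrangian map-germ near $\pi|_{\mathcal L}$, the harder half supplies an odd generating family $\bar G$ near $G$ whose Lagrangian map-germ is $\pi|_{\bar{\mathcal L}}$; then $\RR^{odd}$-stability makes $\bar G$ fibred $\RR^{odd}$-equivalent to $G$, and Theorem \ref{equivalence} makes $\pi|_{\bar{\mathcal L}}$ $\mathbb Z_2$-symmetrically Lagrangian equivalent to $\pi|_{\mathcal L}$, so $\pi|_{\mathcal L}$ is $\mathbb Z_2$-symmetrically Lagrangian stable (Definition \ref{d3b}). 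Conversely, if $\pi|_{\mathcal L}$ is $\mathbb Z_2$-symmetrically Lagrangian stable and $\bar G$ is an odd generating family near $G$, the easy half gives a nearby fibred-$\mathbb Z_2$-symmetric $\pi|_{\bar{\mathcal L}}$, stability gives $\mathbb Z_2$-symmetric Lagrangian equivalence with $\pi|_{\mathcal L}$, and Theorem \ref{equivalence} converts it into fibred $\RR^{odd}$-equivalence of $\bar G$ with $G$; so $G$ is $\RR^{odd}$-stable.

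The hard part will be justifying the harder half of the dictionary, because a priori a nearby fibred-$\mathbb Z_2$-symmetric Lagrangian map-germ might have smaller corank, and in the odd category the classical remedy --- stabilizing by a nondegenerate quadratic form in extra variables --- is unavailable, an odd $S$ having vanishing Hessian $\partial^2 S/\partial\kappa_J^2$ at the origin. What saves the day is that the germs in play sit at fixed points of $\zeta$ (the point $0\in L$ lies in the fixed locus $\{\kappa=0\}$), and at such a point the corank of a fibred-$\mathbb Z_2$-symmetric Lagrangian map equals $\sharp J$ and is \emph{unchanged} under fibred-$\mathbb Z_2$-symmetric perturbation: that same vanishing Hessian forces the kernel of the differential of the map-germ of Remark \ref{pathological} to be the whole $\kappa_J$-subspace, both for $\mathcal L$ and for every nearby $\bar{\mathcal L}$. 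Thus the partition $I,J$ may be held fixed, the construction of Proposition \ref{thisodd} applies uniformly, and the remaining passages between Lagrangian equivalence and fibred equivalence of generating families are the routine ones, carried out exactly as in \cite{Arnold} with the $\mathbb Z_2$-action carried along verbatim.
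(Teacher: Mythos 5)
Your proposal is correct and follows the same route as the paper, which obtains the Corollary as a direct consequence of Theorem \ref{equivalence} via the dictionary between fibred-$\mathbb Z_2$-symmetric Lagrangian germs and odd generating families. You merely make explicit the continuity of this dictionary and the corank-preservation argument (vanishing of $\partial^2 F/\partial\beta^2$ at $\beta=0$ for odd $F$), both of which the paper leaves implicit here but records in the course of proving Theorem \ref{equivalence} in Section 6.
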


\begin{remark} As mentioned at the beginning of this section, the adaptation of Theorem \ref{equivalence} and Corollary \ref{mc} to the context of graph-like Legendrian map-germs is straightforward, once we  adapt definitions \ref{d2} and \ref{d3}-\ref{d3b} to the graph-like Legendrian setting as well. We refer the reader to \cite[Section 5.3]{Izu15} for a detailed thorough exposition of the straightforward relationship between the Lagrangian and Legendrian descriptions in terms of generating families when the Legendrian immersions are graph-like, which is always the case for the center-chord and special IAS obtained from a Lagrangian submanifold. \end{remark}

The final result for the classification of singularities on shell is given by:

 \begin{theorem}\label{oddstablegen2}
 	A generating family $G:\R^n\times\R^m\to\R$ is $\RR^{odd}$-stable
 	if and only if $G$ is an $\RR^{odd}$-versal deformation of $G_0=G(\cdot,0)$.
 \end{theorem}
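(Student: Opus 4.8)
The plan is to mimic the classical equivalence ``stable $\iff$ versal'' for generating families (cf. \cite[Section~19.4, 20.8]{Arnold}), working throughout in the odd (equivariant) category. Recall from Definition~\ref{infstab} that infinitesimal $\RR^{odd}$-stability of $G$ is the surjectivity of the natural map whose image is $\EE_{n+m}^{n-even}\{\beta_j\partial G/\partial\beta_i\}+\EE_k\{\partial G/\partial\lambda_l|_{\R^n\times\{0\}}\}$ onto $\EE_{n+m}^{n-odd}$. The proof has three parts: (a) $\RR^{odd}$-stable $\iff$ infinitesimally $\RR^{odd}$-stable; (b) $\RR^{odd}$-versal $\iff$ infinitesimally $\RR^{odd}$-versal; (c) the two infinitesimal conditions coincide. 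Part (c) is essentially a tautology once the correct modules are set up, because the ``initial speeds'' of an $\RR^{odd}$-versal deformation of $G_0$ span the quotient of $\EE_n^{n-odd}$ by the tangent space to the $\RR^{odd}$-orbit of $G_0$, which is exactly the submodule generated by the $\beta_j\partial G_0/\partial\beta_i$; the parameter directions $\partial G/\partial\lambda_l|_{\R^n\times\{0\}}$ supply precisely the deformation part. So the core work is in (a) and (b), and of these (a) is the substantial one.

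For (b), I would use the standard deformation-theory machinery of Martinet/Damon adapted to the group $\RR^{odd}$ acting on $\EE_n^{n-odd}$: since $\RR^{odd}$ is a geometric subgroup of the contact-type groups (it is $\RR$ intersected with the equivariant diffeomorphisms, acting linearly on the well-behaved $\EE_{n+m}^{n-even}$-module $\EE_{n+m}^{n-odd}$), the infinitesimal versality theorem applies verbatim: a deformation is $\RR^{odd}$-versal iff it is infinitesimally $\RR^{odd}$-versal, i.e. iff its initial speeds together with the tangent space to the orbit generate the full module. One must check finite determinacy / finite codimension holds in the relevant cases so that the versality theorem is available — this is guaranteed in our setting because we only consider generating families with an isolated singularity in $\beta$, equivalently $G_0$ has finite $\RR^{odd}$-codimension. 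The preparation theorem (Malgrange, in its equivariant form, valid since the $\Z_2$-action on the variables is linear and the relevant rings are Noetherian-like) is what converts the infinitesimal statement into the honest one, exactly as in the nonsymmetric proof.

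For (a), $\RR^{odd}$-stable $\Rightarrow$ infinitesimally $\RR^{odd}$-stable is the easier direction: an infinitesimal deformation $\dot G\in\EE_{n+m}^{n-odd}$ that is not in the submodule of \eqref{inf-stab} yields, by a path argument, a nearby family not $\RR^{odd}$-equivalent to $G$, contradicting stability; one exponentiates $\dot G$ to a one-parameter family $G_\tau$ and uses that $\RR^{odd}$-triviality of $G_\tau$ would force $\dot G$ into the tangent space of the orbit plus the base-change directions. The converse, infinitesimally $\RR^{odd}$-stable $\Rightarrow$ $\RR^{odd}$-stable, is the heart of the matter and the step I expect to be the main obstacle: one must show that infinitesimal stability is an open condition and that any family close to an infinitesimally stable $G$ is $\RR^{odd}$-equivalent to it. The argument is the usual one — reduce to a finite-dimensional transversality statement via finite determinacy, then use Mather's ``infinitesimal stability implies stability'' homotopy method — but each ingredient (finite determinacy, the openness of transversality, the homotopy/integration step producing the equivariant fibred diffeomorphism) has to be re-run in the odd category, checking at every stage that the diffeomorphisms produced can be chosen odd in $\beta$ and fibred over the $\lambda$-space. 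The key technical point making this go through is Proposition~\ref{thisodd} together with Theorem~\ref{equivalence}: the latter already establishes that $\Z_2$-symmetric Lagrangian equivalence matches fibred $\RR^{odd}$-equivalence, so once we know (via the equivariant versality theorem) that infinitesimal $\RR^{odd}$-stability is equivalent to $\RR^{odd}$-versality, Corollary~\ref{mc} lets us import the geometric stability statement, and the remaining content is precisely the openness of the infinitesimal condition — which follows from upper semicontinuity of $\RR^{odd}$-codimension, proved by the equivariant preparation theorem exactly as in \cite[Section~20.8]{Arnold}.
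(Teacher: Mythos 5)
Your proposal is correct and follows essentially the same route as the paper: the paper likewise reduces the theorem to the two classical equivalences (stable $\Leftrightarrow$ infinitesimally stable, via Mather, and infinitesimally stable $\Leftrightarrow$ versal, via the Malgrange preparation theorem as in Remark \ref{isver}), and justifies their validity in the odd category by observing that the fibred $\RR^{odd}$ action is a geometrical subgroup in Damon's sense. The only difference is bookkeeping — you split the second equivalence into ``versal $\Leftrightarrow$ infinitesimally versal'' plus an identification of the two infinitesimal conditions, which the paper packages as Proposition \ref{versal} together with Remark \ref{isver} — so no further comparison is needed.
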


Theorem  \ref{oddstablegen2} \p{follows, as a special case, from basic general theorems in singularity theory, as we now explain}. In the nonsymmetric case, the analogous to Theorem  \ref{oddstablegen2} can be divided in two theorems.

The first one states that a family $F:\mathbb R^n\times\mathbb R^m\to\mathbb R$  is stable (definition analogous to Definition \ref{oddstablegen} but replacing $\mathcal D_n^{odd}$ by the full diffeomorphism group $\mathcal D_n$ in Definition \ref{oe}) if and only if  $F$ is infinitesimally stable \p{(analogous to Definition \ref{infstab})}.
The concept of infinitesimal stability for $F$ under an action of a group $\mathcal G$ means, loosely speaking, that the $\mathcal G$-orbit of such an action contains a neighborhood of $F$.

A very important property of an infinitesimally stable family $F$  is its finite determinacy, meaning that $F$ is equivalent to $F'$  under this $\mathcal R^+$ action iff \m{there exists $k\in\mathbb N$
such that} $F$ and $F'$ are $\mathcal R^+$ equivalent up to the $k^{th}$ order in their Taylor expansions on $\mathbb R^n$.
Around 1968, J. Mather \cite{Mather} proved that infinitesimally stable families are stable, and vice versa if $F$ is proper.

At that same time Mather also proved the second theorem, which states that a family $F:\mathbb R^n\times\mathbb R^m\to\mathbb R$ is infinitesimally stable  if and only if $F$ is a versal deformation of $F_0=F(\cdot,0):\mathbb R^n\to\mathbb R$. The concept of a versal deformation  $F$ of a function $f:\mathbb R^n\to\mathbb R$ means, loosely speaking, that $F$ contains all possible deformations of $f$ or, more precisely, that any deformation $F'$ of $f$ is (fibred) $\mathcal R^+$ equivalent to one induced (by possibly eliminating some parameters) from $F$. If $F:\mathbb R^n\times\mathbb R^m\to\mathbb R$ is a versal deformation of $f=F_0$, then $\overline{F}:\mathbb R^n\times\mathbb R^m\to\mathbb R\times\mathbb R^m$, $\overline{F}(\beta,\lambda)=(F(\beta,\lambda),\lambda)$, is called a versal unfolding of $f$.
Finally, the versal deformations $F$ (or unfoldings $\overline{F}$) of $f$ with the least possible number of parameters  are called miniversal deformations (or unfoldings) of $f$, and they are all equivalent.

The complete statements and proofs of these two theorems belong to the basics of singularity theory, so they can be found in various texts, as for instance in \cite{Arnold} (see also \cite{Izu15}). In fact, these theorems are stated and proved for general (families of) maps, not just functions (as Mather did \cite{Mather}).

Then, around 1981, J. Damon \cite{Damon} (see also \cite{Damon2}) showed that these basic theorems of singularity theory are still valid if the appropriate group action induced from a full diffeomorphism group (for $F:\mathbb R^n\times\mathbb R^m\to\mathbb R$ this is the fibred $\mathcal R^+$ group action) is
 replaced by a subgroup satisfying certain properties (natural, tangential, exponential and filtrational), which he called a {\it geometrical subgroup}. The key point is that the (fibred) $\mathcal R^{odd}$ group action, induced from $\mathcal D_n^{odd}$  acting on odd families $F:\mathbb R^n\times\mathbb R^m\to\mathbb R$ as in Definition \ref{oe}, is a geometrical subgroup in this sense, so these basic theorems of singularity theory \m{go through}  \footnote{\p{In our odd setting, the first theorem asserts that Definitions \ref{oddstablegen} and \ref{infstab} are equivalent, while the second theorem  asserts that equation (\ref{inf-stab}) in Definition \ref{infstab} is equivalent to equation (\ref{inf-versal}) in Proposition \ref{versal}. This second theorem  is more direct, cf. Remark \ref{isver}.}},  implying the statement of Theorem \ref{oddstablegen2}.

 We end this section with the following results from \cite{DMR} which characterize the $\RR^{odd}$-versal deformations $G:\R^n\times\R^{m}\to\R$, \p{emphasizing} that, although the results  in \cite{DMR} were obtained in the smooth category, they also hold in the real analytic category (see \cite{BKP}). 
 
 \begin{notation}
We denote by $\EE_n^{even}$ the ring of even smooth function-germs $f : (\R^n,0) \to \R$ , by $\EE_n^{odd}$ the set of odd smooth function-germs $g : (\R^n, 0) \to (\R, 0)$, which has a module structure over $\EE_n^{even}$, and by $\M_n^{k(odd)}$, $k$ odd,  the $\EE_n^{even}$-submodule of $\EE_n^{odd}$  generated by $x^{k_1} \cdots x^{k_n}$,  s.t. $ k_i\geq 0$, $\sum k_i=k$.
 \end{notation}
 
 First, in the general case:

 \begin{proposition}{\rm (cf. \cite[Theorem 3.9]{DMR})}\label{versal} A $m$-parameter deformation $G(\beta,\lambda)$ of $G_0(\beta)=G(\beta,0)$ is $\RR^{odd}$-versal if and only if   
 	\begin{equation}\label{inf-versal}
 	\EE_n^{odd} =\EE_n^{even}\left\{  \beta_j \frac{\partial G_0}{\partial \beta_i},  i,j=1...n \right\}+\R\left\{ \frac{\partial G}{\partial \lambda_l}|_{\R^n\times\{0\}}, l=1...m \right\}.
 	\end{equation}
 \end{proposition}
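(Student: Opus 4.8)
The plan is to obtain Proposition \ref{versal} from the infinitesimal criterion for versality and then to make that criterion fully explicit by computing, by hand, the tangent space to the $\RR^{odd}$-orbit of $G_0$. In other words, Proposition \ref{versal} is precisely ``the second theorem'' discussed above, specialized to the geometric subgroup $\RR^{odd}$, once the orbit tangent space has been identified.

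First I would invoke the general machinery recalled above. Since the fibred $\RR^{odd}$ group action is a geometric subgroup in the sense of Damon \cite{Damon,Damon2}, the Mather--Damon theorem applies, so an $m$-parameter odd deformation $G(\beta,\lambda)$ of $G_0(\beta)=G(\beta,0)$ is $\RR^{odd}$-versal if and only if it is infinitesimally $\RR^{odd}$-versal, i.e.
$$
\EE_n^{odd}=T\!\left(\RR^{odd}\!\cdot G_0\right)+\R\left\{\frac{\partial G}{\partial\lambda_l}|_{\R^n\times\{0\}},\ l=1,\dots,m\right\},
$$
where $\EE_n^{odd}$ is the full space of odd infinitesimal deformations of the odd germ $G_0$ --- no additive-constant quotient is needed here, since every odd germ vanishes at the origin --- and $T(\RR^{odd}\!\cdot G_0)$ is the tangent space at $G_0$ to its $\RR^{odd}$-orbit. (The reduction of the deformation-velocity coefficients from $\EE_m$ down to $\R$ in this criterion is the usual Malgrange-preparation step, which is exactly what turns the stability condition (\ref{inf-stab}) into the versality condition (\ref{inf-versal}), cf. Theorem \ref{oddstablegen2}.)

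Next I would identify $T(\RR^{odd}\!\cdot G_0)$. Differentiating the relation $\Phi_t(-\beta)\equiv-\Phi_t(\beta)$ in $t$ along a path $\Phi_t\in\mathcal D_n^{odd}$ with $\Phi_0=\mathrm{id}$ shows that the Lie algebra of $\mathcal D_n^{odd}$ consists of the vector fields $v=\sum_i v_i\,\partial_{\beta_i}$ whose components $v_i$ are odd in $\beta$, acting on $G_0$ by $v\mapsto\sum_i v_i\,\partial G_0/\partial\beta_i$. I would then use the elementary Hadamard-type lemma that every odd germ $g$ can be written $g(\beta)=\sum_j\beta_j\,a_j(\beta)$ with $a_j\in\EE_n^{even}$ (apply Taylor with integral remainder to $\frac12(g(\beta)-g(-\beta))$ and symmetrise the coefficients): this lets me write $v_i=\sum_j a_{ij}\beta_j$ with $a_{ij}\in\EE_n^{even}$, every such choice being admissible, so that
$$
T\!\left(\RR^{odd}\!\cdot G_0\right)=\EE_n^{even}\left\{\beta_j\,\frac{\partial G_0}{\partial\beta_i},\ i,j=1,\dots,n\right\}.
$$
Substituting this into the infinitesimal versality condition yields precisely equation (\ref{inf-versal}), which proves the proposition.

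The only genuinely nontrivial ingredient is the first step: that for $\RR^{odd}$ versality is equivalent to infinitesimal versality. This is where the singularity-theoretic weight lies --- one must verify Damon's axioms (natural, tangential, exponential, filtration) for the $\Z_2$-equivariant right group, or, equivalently, run a $\Z_2$-equivariant version of the homotopy/preparation argument; both are available in the literature. Everything after that --- the description of the orbit tangent space and the bookkeeping of which module ($\EE_n^{even}$, $\EE_m$, or $\R$) appears in which slot --- is the elementary computation sketched above.
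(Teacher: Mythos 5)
Your proposal is correct and follows essentially the route the paper itself indicates: the paper does not prove Proposition \ref{versal} but imports it from \cite[Theorem 3.9]{DMR}, and it justifies the surrounding results (Theorem \ref{oddstablegen2} and Remark \ref{isver}) by exactly the two ingredients you use, namely Damon's geometric-subgroup theorem reducing $\RR^{odd}$-versality to its infinitesimal version, and the explicit identification of the orbit tangent space. Your computation of $T(\RR^{odd}\cdot G_0)$ via the odd Hadamard lemma (odd germs form the $\EE_n^{even}$-module generated by $\beta_1,\dots,\beta_n$, whence the generators $\beta_j\,\partial G_0/\partial\beta_i$) is the correct and standard way to obtain the module in (\ref{inf-versal}), and your remarks on the absence of an additive-constant term and on the $\R$- versus $\EE_m$-coefficients are consistent with Definition \ref{foe} and Remark \ref{isver}.
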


Then, specifically for center-chord or special IAS on shell:
  \begin{corollary}{\rm (cf. \cite[Corollary 4.4, Theorem 4.5]{DMR})}
 	The germ of a generating family $G$ is an $\RR^{odd}$-versal deformation of $G_0$ if and only if
 	$$
 	\M_{n}^{3(odd)}=\EE_n^{even}\left\{  \beta_j \frac{\partial G_0}{\partial \beta_i},  i,j=1...n \right\}+\R\left\{ \frac{\partial g}{\partial q_l}|_{\R^n\times\{0\}}, l=1...n \right\},
 	$$
 	where the relation between $G$ ($=G_{cc}$ or \ $=G_{sp}$)  and $g$ ($=g_{cc}$ or \ $=g_{sp}$) are given by (\ref{gen-fam-gen-fun})-(\ref{eq:GenFamSP}).
 \end{corollary}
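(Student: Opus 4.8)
The plan is to start from Proposition \ref{versal}, which characterizes $\mathcal{R}^{odd}$-versality of a deformation $G$ of $G_0$ by the infinitesimal condition (\ref{inf-versal}), and then to specialize this condition using the concrete form of the generating functions $g_{cc}^s$ and $g_{sp}^s$. The first step is to observe that in both cases the generating function $g(q,\beta)$ — whether $g_{cc}^s(q,\beta)=\tfrac12\left(S(q+\beta)-S(q-\beta)\right)$ or $g_{sp}^s(q,\beta)=Q(q,\beta)$ — is odd in $\beta$ and, moreover, that at $q=0$ it vanishes to order at least three in $\beta$. For $g_{cc}^s$ this follows by Taylor-expanding $S$ about $0$: the degree-one and degree-two terms of $S(q\pm\beta)$ in $\beta$ either cancel (the even-in-$\beta$ quadratic term) or, after subtracting, reduce to $d^2S(0)(q,\beta)$ which is killed once we restrict attention to the relevant germ structure; more precisely, $L\subset\E^s_{cc}(L)$ exactly because $\tfrac{\partial^2 g}{\partial\beta^2}=0$ along $\beta=0$. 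For $g_{sp}^s=Q$, harmonicity of $Q$ together with $Q$ odd in $t$ gives $\tfrac{\partial^2 Q}{\partial t^2}(s,0)=0$, hence again vanishing to order $\geq 3$ in $\beta$ at the relevant point. This is the content behind the statement that $L$ lies in the singular locus, and it is the geometric reason the cube module $\M_n^{3(odd)}$ appears.

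The second step is to rewrite the module $\EE_n^{even}\{\beta_j\,\partial G_0/\partial\beta_i\}$. Since $G_0(\beta)=G(\beta,0)$ and, by (\ref{gen-fam-gen-fun}), $G(\beta,q,p)=g(q,\beta)-p\cdot\beta$, we have $G_0(\beta)=g(0,\beta)$, so $\partial G_0/\partial\beta_i = (\partial g/\partial\beta_i)(0,\beta)$. Because $g(0,\beta)$ vanishes to order $\geq 3$, each $\partial G_0/\partial\beta_i$ vanishes to order $\geq 2$, hence each product $\beta_j\,\partial G_0/\partial\beta_i$ vanishes to order $\geq 3$ and lies in $\M_n^{3(odd)}$; this shows the first summand on the right of (\ref{inf-versal}) is contained in $\M_n^{3(odd)}$. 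Similarly, the parameters of $G$ split as $\lambda=(q,p)$, and the $p$-derivatives give $\partial G/\partial p_l = -\beta_l$, while the $q$-derivatives give $\partial G/\partial q_l = (\partial g/\partial q_l)(q,\beta)$, which restricted to $q=0$ (i.e. to $\R^n\times\{0\}$ in the $(\beta,q,p)$-coordinates, meaning $\lambda=0$) is $(\partial g/\partial q_l)(0,\beta)$, again odd in $\beta$ and vanishing to order $\geq 2$ since $\partial g/\partial q_l$ inherits the order-$3$ vanishing of $g$ in $\beta$. Thus the $\R$-span appearing in (\ref{inf-versal}) is $\R\{\beta_1,\dots,\beta_n\}\oplus\R\{(\partial g/\partial q_l)(0,\beta)\}$.

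The third step is to quotient out the linear part. The summand $\R\{\beta_1,\dots,\beta_n\}$ is precisely a complement to $\M_n^{3(odd)}$ inside the space of odd germs vanishing to order $\geq 3$ together with the linear forms — more carefully, $\EE_n^{odd}=\R\{\beta_1,\dots,\beta_n\}\oplus(\text{odd germs of order}\geq 3)$ modulo nothing relevant, and the space of odd germs of order $\geq 3$ is exactly the $\EE_n^{even}$-module generated by cubic monomials, i.e. $\M_n^{3(odd)}$, once one uses the standard fact (Hadamard-type lemma in the even/odd category) that every odd germ vanishing to order $3$ is an $\EE_n^{even}$-combination of the $\beta^{k}$ with $|k|=3$. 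Therefore (\ref{inf-versal}) holds if and only if, after removing the linear generators $\beta_l=-\partial G/\partial p_l$ which autonomously account for the degree-one part of $\EE_n^{odd}$, the remaining identity holds at the level of $\M_n^{3(odd)}$, namely $\M_n^{3(odd)}=\EE_n^{even}\{\beta_j\,\partial G_0/\partial\beta_i\}+\R\{(\partial g/\partial q_l)(0,\beta)\}$, which is the displayed equation in the Corollary. Finally, combining this equivalence with Theorem \ref{oddstablegen2} (which identifies $\mathcal{R}^{odd}$-stability of $G$ with $\mathcal{R}^{odd}$-versality of $G_0$) yields the statement.

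The main obstacle I expect is the bookkeeping in the third step: one must verify carefully that the linear generators $\beta_l$ coming from the $p$-parameters exactly and economically generate the degree-one part of $\EE_n^{odd}$ over $\R$, so that the versality condition genuinely collapses to a condition on $\M_n^{3(odd)}$ and no spurious degree-one obstructions remain; this requires knowing that $G_0$ has no quadratic-in-$\beta$ part (equivalently, that the relevant $2$-jet vanishes, which is where the on-shell hypothesis $L\subset\E^s(L)$ and the identities $\partial^2 Q/\partial t^2(s,0)=0$ resp. $d^2S^+(0)=d^2S^-(0)$ are essential), together with the Hadamard-type decomposition of odd germs. Everything else is a direct substitution of (\ref{gen-fam-gen-fun})--(\ref{eq:GenFamSP}) into (\ref{inf-versal}).
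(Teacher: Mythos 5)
Your overall strategy is the right one (and is essentially what \cite{DMR} does, the paper itself only citing that reference): substitute $\lambda=(q,p)$ and $G(\beta,q,p)=g(q,\beta)-p\cdot\beta$ into the versality criterion of Proposition \ref{versal}, observe that the $p$-derivatives contribute exactly the linear forms $\beta_1,\dots,\beta_n$, use the splitting $\EE_n^{odd}=\R\{\beta_1,\dots,\beta_n\}\oplus\M_n^{3(odd)}$ (valid by the equivariant Hadamard/preparation argument, since an odd germ with vanishing $1$-jet automatically has vanishing $2$-jet), and cancel the linear part. However, there is a genuine gap in the step where you place the remaining generators inside $\M_n^{3(odd)}$. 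You claim that $\frac{\partial g}{\partial q_l}(0,\beta)$ vanishes to order $\geq 2$ in $\beta$ because it ``inherits the order-$3$ vanishing of $g$''; this inference is invalid, since $g(q,\beta)$ does \emph{not} vanish to order $3$ in $\beta$ for $q\neq 0$ (its $\beta$-linear part is $dS(q)\cdot\beta$), so differentiating in $q$ and then setting $q=0$ does not preserve the order of vanishing. Concretely, $\frac{\partial g_{cc}}{\partial q_l}(0,\beta)=\frac12\bigl(\partial_lS(\beta)-\partial_lS(-\beta)\bigr)$ has linear part $\sum_i S_{,il}(0)\,\beta_i$, and similarly $\beta_j\frac{\partial G_0}{\partial\beta_i}$ has linear part $\partial_iS(0)\,\beta_j$. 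The identity you invoke to kill these terms --- the on-shell condition $d^2S^+(0)=d^2S^-(0)$, resp. $\partial^2Q/\partial t^2(s,0)=0$ --- is automatic here (it is just oddness of $g$ in $\beta$) and does \emph{not} imply $d^2S(0)=0$.

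What is actually needed is the normalization $j_0^2S=0$, i.e. $S\in\M_n^3$, which is achievable by an affine symplectic change of coordinates (translate the base point of $L$ to the origin and shear so that $T_0L=\{p=0\}$) and is the standing hypothesis in \cite{DMR} and in Section 5 of the paper. Without it the stated equality is simply false while versality can hold: for $n=1$ and $S(q)=\tfrac12q^2+q^3$ one gets $\frac{\partial g}{\partial q}(0,\beta)=\beta\notin\M_1^{3(odd)}$, so the right-hand side strictly contains $\M_1^{3(odd)}$ and the displayed equality fails, yet condition (\ref{inf-versal}) holds and $G$ is $\RR^{odd}$-versal. Once you add the hypothesis $S\in\M_n^3$ (and, in the special case, the corresponding statement $j_0^2H=0$ for the Taylor coefficients of $Q$), both families of generators do lie in $\M_n^{3(odd)}$ and your cancellation argument goes through; the appeal to Theorem \ref{oddstablegen2} at the end is superfluous, since the corollary concerns versality, not stability.
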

\begin{remark}\label{isver}
	\w{Using the  general Malgrange Preparation Theorem (see \cite{Martinet}, Chapter X, Section 6.3), one can show that Proposition \ref{versal} implies that infinitesimal $\RR^{odd}$-stability (\ref{inf-stab}) is equivalent to $\RR^{odd}$-versality (\ref{inf-versal}).}
\end{remark}

\section{Realization of simple singularities on shell for canonical IAS}\label{realizing}

\subsection{Simple singularities of odd functions}

The following results are a compilation of results in \cite{DMR}, section 3.

\w{We recall that the germ of a function $f$ has a simple singularity  if there exists \m{a} neighborhood of $f$ that intersects only \m{a} finite number of \m{equivalence classes of} other singularities  .}

Let $G_0\in \EE_n^{odd}$ with a singular point at $0$. If $n\geq 3$, $G_0$ is not $\RR^{odd}$-simple. For $n=1$, only the singularities $A_{2k/2}$ are simple. The corresponding miniversal deformations are
$$
G(t,\lambda_1,...,\lambda_k)=t^{2k+1}+\sum_{j=1}^k \lambda_jt^{2j-1}
$$
For codimension \m{$\leq 2$}, the only possibilities are $A_{2/2}$ and $A_{4/2}$.

For $n=2$, the following singularities are simple:

\begin{enumerate}
\item $D_{2k/2}^{\pm}$: $(t_1,t_2)\to t_1^2t_2+t_2^{2k-1}$, $k=2,3..$
\item $E_{8/2}$: $(t_1,t_2)\to t_1^3+t_2^5$
\item $J_{10/2}^{\pm}$: $(t_1,t_2)\to t_1^3+t_1t_2^4$
\item $E_{12/2}$:  $(t_1,t_2)\to t_1^3+t_2^7$
\end{enumerate}
The corresponding miniversal deformations are
\begin{enumerate}
\item  $D_{2k/2}^{\pm}$: $G(t_1,t_2,\lambda_1,..,\lambda_k)=t_1^2t_2+t_2^{2k-1}+\lambda_1t_1+\sum_{i=2}^k\lambda_it_2^{2i-3}$
\item $E_{8/2}$: $G(t_1,t_2,\lambda_1,...,\lambda_4)=t_1^3+t_2^5+\lambda_1t_1+\lambda_2t_2+\lambda_3t_1t_2^2+\lambda_4t_2^3$
\item $J_{10/2}^{\pm}$: $G(t_1,t_2,\lambda_1,...,\lambda_5)=t_1^3+t_1t_2^4+\lambda_1t_1+\lambda_2t_2+\lambda_3t_1^2t_2+\lambda_4t_1t_2^2+\lambda_5t_2^3$
\item $E_{12/2}$: $G(t_1,t_2,\lambda_1,...,\lambda_6)=t_1^3+t_2^7+\lambda_1t_1+\lambda_2t_2+\lambda_3t_1t_2^2+\lambda_4t_2^3+\lambda_5t_1t_2^4+\lambda_6t_2^5$
\end{enumerate}
For codimension $\leq 4$, the only possibilities are $D_{4/2}^{\pm}$, $D_{6/2}^{\pm}$, $D_{8/2}^{\pm}$ and $E_{8/2}$.

\subsection{Relation between the generating families for $\phi_{cc}(L)$ and $\phi_{sp}(L)$}

\w{Assume $L$ is locally the graph of $dS(s)$, with $S$ the germ at $0$ of a real analytic function,
$
S(s)=\sum_{(k_1,\cdots,k_n)\in \mathbb N^n} a_{k_1,\cdots,k_n}s_1^{k_1}\cdots s_n^{k_n}, \ \ s=(s_1,\cdots,s_n)\in \mathbb R^n 
$, 
where  $\mathbb N$ denotes the set of natural numbers including $0$ and $ a_{k_1,\cdots,k_n}\in \mathbb R$ for any $(k_1,\cdots,k_n)\in \mathbb N^n$.}
\w{Take then the germ at $0$ of a holomorphic function
$
H(z)=\sum_{(k_1,\cdots,k_n)\in \mathbb N^n} a_{k_1,\cdots,k_n}z_1^{k_1}\cdots z_n^{k_n}, \ \ z=(z_1,\cdots,z_n)\in \mathbb C^n.
$
}
\begin{lemma}
For $L\subset\R^{2m}$ a Lagrangian submanifold, let $g_{cc}(q,\dot{q})$ and $g_{sp}(q,\dot{q})$ denote the generating functions of $\phi_{cc}(L)$
and $\phi_{sp}(L)$. Then \footnote{\p{To simplify the notation, from now on we are dropping the superscript $s$ for the generating functions and generating families of the on-shell part of the IAS.}}
\begin{equation}\label{eq:RelationCCSP}
g_{cc}(q,\dot{q})=-ig_{sp}(q,i\dot{q}).
\end{equation}
In other words, 
$$
\mbox{if} \ \ g_{sp}(q,\dot{q})=\sum_{j=1,odd}^{\infty} b_j(q) (-1)^{\left\lfloor{j/2} \right\rfloor} (\dot{q})^j, \ \mbox{then} \ \ g_{cc}(q,\dot{q})=\sum_{j=1,odd}^{\infty} b_j(q) (\dot{q})^j.
$$
\end{lemma}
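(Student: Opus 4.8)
The plan is to route both generating functions through the single holomorphic germ $H$ and then to perform the substitution $\dot q\mapsto i\dot q$. First I would invoke the on-shell descriptions from Section~\ref{genfamilydescription}: with $L$ the graph of $dS$ one has $g_{cc}(q,\dot q)=\tfrac12\big(S(q+\dot q)-S(q-\dot q)\big)$ and $g_{sp}(q,\dot q)=Q(q,\dot q)$, where $H(s+it)=P(s,t)+iQ(s,t)$ is the holomorphic germ whose Taylor coefficients $a_{k_1,\dots,k_n}$ coincide with those of $S$. Since those coefficients are real we have $\overline{H(z)}=H(\bar z)$ as germs, and since $q\pm\dot q$ are real vectors, $S(q\pm\dot q)=H(q\pm\dot q)$; hence
\begin{equation*}
g_{cc}(q,\dot q)=\tfrac12\big(H(q+\dot q)-H(q-\dot q)\big),\qquad g_{sp}(q,\dot q)=\mbox{Im}\,H(q+i\dot q)=\tfrac1{2i}\big(H(q+i\dot q)-H(q-i\dot q)\big).
\end{equation*}

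Next I would substitute $\dot q\mapsto i\dot q$ into the power-series identity for $g_{sp}$ — legitimate because it merely amounts to composing convergent series of germs — to get $g_{sp}(q,i\dot q)=\tfrac1{2i}\big(H(q-\dot q)-H(q+\dot q)\big)$, and then multiply by $-i$, obtaining $-i\,g_{sp}(q,i\dot q)=\tfrac12\big(H(q+\dot q)-H(q-\dot q)\big)=g_{cc}(q,\dot q)$, which is exactly (\ref{eq:RelationCCSP}).

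Finally, for the series form I would use that $Q$ is odd in $t$, so $g_{sp}(q,\dot q)$ contains only terms whose total degree $j$ in $\dot q$ is odd; writing $g_{sp}=\sum_{j\ \mathrm{odd}}b_j(q)(-1)^{\lfloor j/2\rfloor}(\dot q)^j$ with $(\dot q)^j$ denoting the homogeneous degree-$j$ part, the operation $g_{sp}(q,\dot q)\mapsto -i\,g_{sp}(q,i\dot q)$ multiplies the degree-$j$ piece by $-i\cdot i^{j}$, and a one-line check that $-i\cdot i^{j}=(-1)^{\lfloor j/2\rfloor}$ for odd $j$ shows that the sign $(-1)^{\lfloor j/2\rfloor}$ is cancelled, leaving $g_{cc}(q,\dot q)=\sum_{j\ \mathrm{odd}}b_j(q)(\dot q)^j$. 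The whole argument is routine; the only point deserving a word of care is the legitimacy of the complex substitution, namely that the identities above hold as equalities of convergent power-series germs rather than merely formally — which is immediate since $S$, and hence $H$, is real analytic.
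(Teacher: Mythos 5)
Your proposal is correct and follows essentially the same route as the paper: both arguments pass through the common holomorphic germ $H$ extending $S$ with real Taylor coefficients and compare the degree-$j$ pieces of $Q(q,\dot q)$ and of $\tfrac12\bigl(S(q+\dot q)-S(q-\dot q)\bigr)$, the only cosmetic difference being that the paper does the binomial expansion explicitly while you package it via $Q=\operatorname{Im}H=\tfrac1{2i}\bigl(H(z)-H(\bar z)\bigr)$ and the substitution $\dot q\mapsto i\dot q$. Your sign check $-i\cdot i^{j}=(-1)^{\lfloor j/2\rfloor}$ for odd $j$ matches the coefficients $b_j$ appearing in the paper's computation.
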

\begin{proof}
We have that
$$
Q(s,t)=\sum_{j=1,odd}^{\infty}  b_j(s) (-1)^{\left\lfloor{j/2} \right\rfloor} t^j, \ \mbox{where} \ b_j(s)=\sum_{k=j}^{\infty} a_k\binom{k}{j} s^{k-j}.
$$
On the other hand,
$$
g_{cc}(s,\beta)=\frac{1}{2}\sum_ka_k\left( (s+\beta)^k-(s-\beta)^k \right) \ \Rightarrow \ g_{cc}(s,\beta)=\sum_{j=1,odd}^{\infty}b_j(s) \beta^j.
$$
\end{proof}

\subsection{Simple singularities on shell of $\phi_{cc}(L)$ and $\phi_{sp}(L)$}

We now show by examples that, by an adequate choice of $L$, the simple singularities $A_{2/2}$, $A_{4/2}$, $D_{4/2}^{\pm}$, $D_{6/2}^{\pm}$,$D_{8/2}^{\pm}$ and $E_{8/2}$ appear as stable singularities of $\phi_{cc}(L)$ and $\phi_{sp}(L)$.

\begin{example}\label{ex:A22}
Consider $S(q)=q^3$. Then,
$$
G_{cc}(\beta,q,p)=\beta^3+3q^2\beta-p\beta.
$$
and
$$
G_{sp}(\beta,q,p)=-\beta^3+3q^2\beta-p\beta,
$$
which are versal unfoldings of $A_{2/2}$-singularities.
\end{example}

\begin{example}\label{ex:A42}
Consider $S(q)=q^5+\frac{1}{4}q^4$. Then,
$$
G_{cc}(\beta,q,p)=\beta^5 +10q^2\beta^3+5q^4\beta+q^3\beta+q\beta^3-p\beta.
$$
The caustic $\E_{cc}^s(L)$ is given by $\beta=0$ or $3q+30q^2+10\beta^2=0$ (see Figure \ref{fig:Caustics} (a)).
We have also
$$
G_{sp}=\beta^5-10q^2\beta^3+5q^4\beta+q^3\beta-q\beta^3-p\beta.
$$
Thus $\E_{sp}^s(L)$ is given by $\beta=0$ or
$3q=10\beta^2-30q^2$ (see Figure \ref{fig:Caustics} (b)). Observe that both constructions lead to versal unfoldings
of an $A_{4/2}$-singularity.
\end{example}

\begin{figure}[htb]
\centering \fsep \subfigure[ The set $\E_{cc}(L)$.] {
\includegraphics[width=.25\linewidth,clip =false]{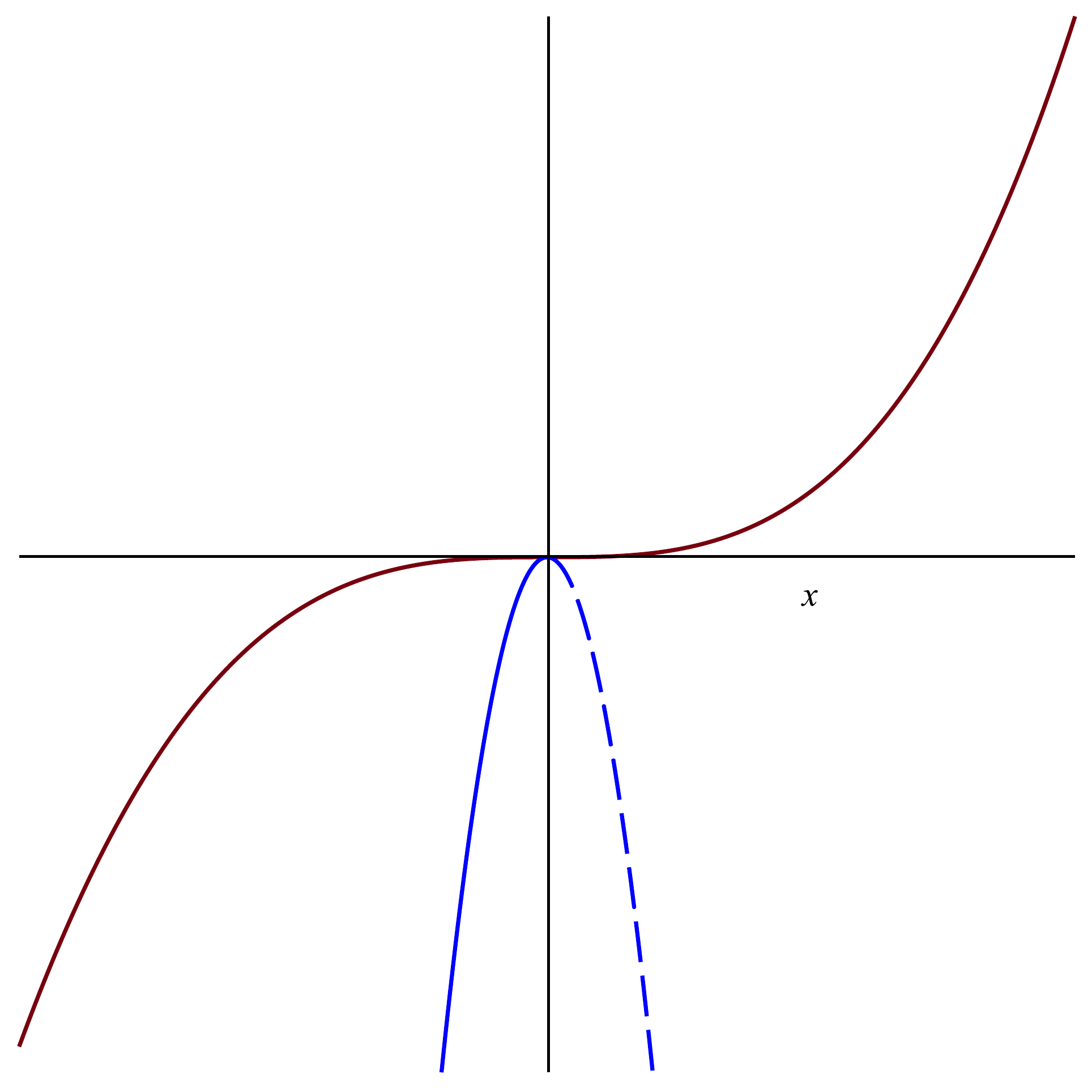}} \fsep\subfigure[
The set $\E_{sp}(L)$. ] {
\includegraphics[width=.25\linewidth,clip
=false]{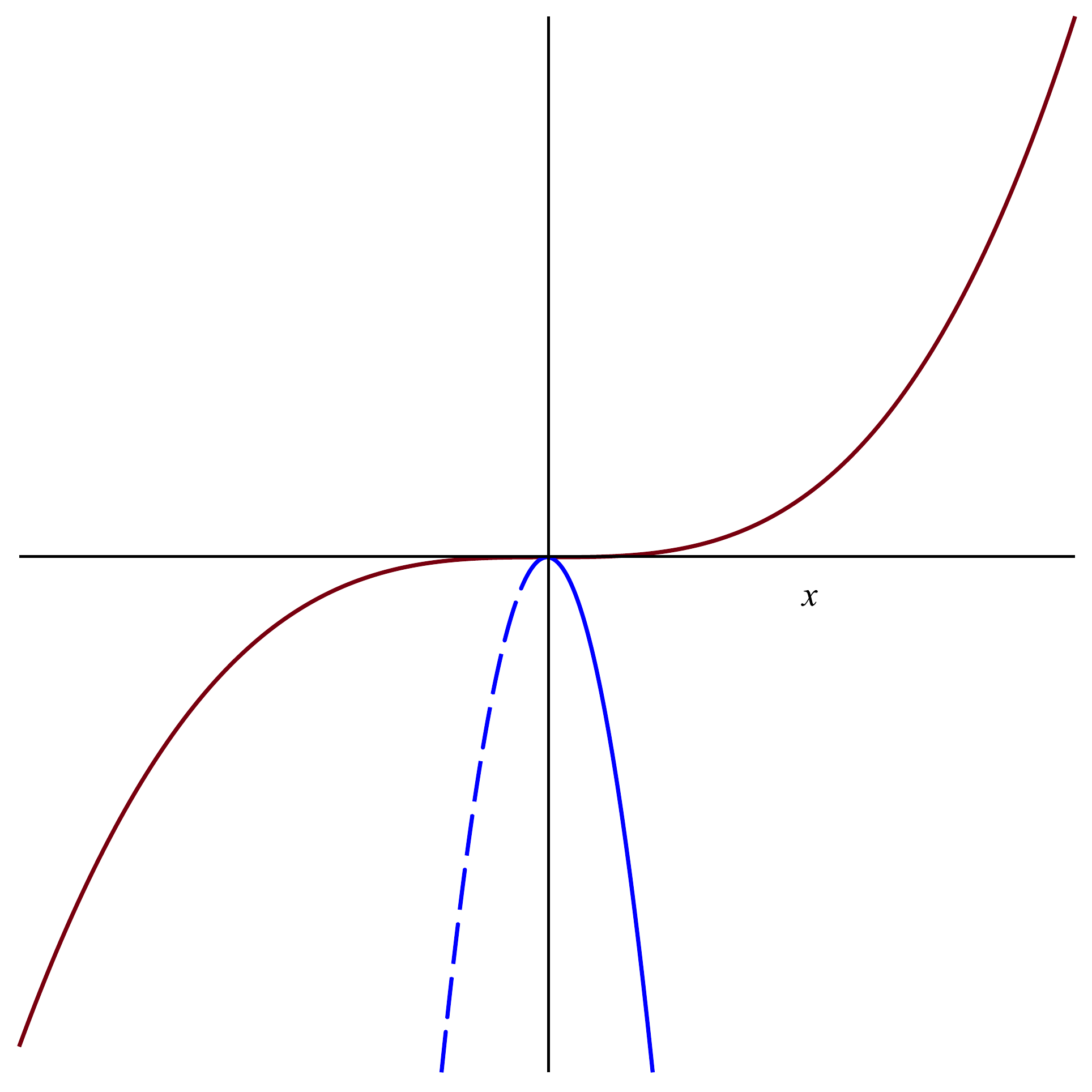}}\fsep
\caption{The caustics $\E_{cc}(L)$ and $\E_{sp}(L)$ of example \ref{ex:A42}.}
\label{fig:Caustics}
\end{figure}

\begin{example}\label{ex:D42}
Consider \  
$
S(u_1,u_2)=u_1^2u_2\pm u_2^3
$. \
Then,
$$
G_{cc}(\beta,q,p)=\pm \beta_2^3 +\beta_1^2\beta_2-p_1\beta_1-p_2\beta_ 2\pm3q_2^2\beta_2+q_1^2\beta_2+2q_1q_2\beta_1
$$
and the singular set is defined by $\pm 3\beta_2^2=\beta_1^2$.
In the special case, we have
$$
G_{sp}(\beta,q,p)=\mp \beta_2^3 -\beta_1^2\beta_2-p_1\beta_1-p_2\beta_ 2\pm3q_2^2\beta_2+q_1^2\beta_2+2q_1q_2\beta_1,
$$
and the singular set is again defined by $\pm 3\beta_2^2=\beta_1^2$. Both constructions lead to versal unfoldings of a
$D_{4/2}^{\pm}$-singularity.
\end{example}

\begin{example}\label{ex:D62}
Consider \ 
$
S=q_1^2q_2\pm q_ 2^5+\frac{1}{4}q_2^4.
$ \ 
Then, 
$$
G_{cc}=\beta_1^2\beta_2\pm\beta_2^5+q_2\beta_2^3-p_1\beta_1-p_2\beta_2+q_1^2\beta_2+2q_1q_2\beta_1\pm 10\beta_2^3q_2^2\pm 5\beta_2q_2^4+\beta_2q_2^3,
$$
while
$$
G_{sp}=-\beta_1^2\beta_2\pm\beta_2^5-q_2\beta_2^3-p_1\beta_1-p_2\beta_2+q_1^2\beta_2+2q_1q_2\beta_1\mp 10\beta_2^3q_2^2\pm 5\beta_2q_2^4+\beta_2q_2^3.
$$
Thus, $G_{cc}$ is a versal unfolding of a $D_{6/2}^{\pm}$-singularity, while
$G_{sp}$ is a versal unfolding of a $D_{6/2}^{\mp}$ singularity.
\end{example}

\begin{example}\label{ex:D82}
Consider \ 
$
S=q_1^2q_2\pm q_ 2^7+q_1q_2^3+\frac{1}{6}q_2^6.
$ \newline 
Then, the corresponding $G_{cc}$ is a versal unfolding of a $D_{8/2}^{\pm}$ singularity.
\newline The corresponding $G_{sp}$ is a versal unfolding of a $D_{8/2}^{\pm}$ singularity.
\end{example}

\begin{example}\label{ex:E82}
Consider \ 
$
S=q_1^3+q_ 2^5+q_1q_2^3.
$ \newline 
Then, the corresponding $G_{cc}$ is a versal unfolding of a $E_{8/2}$ singularity.
\newline The corresponding $G_{sp}$ is a versal unfolding of a $E_{8/2}$ singularity.
\end{example}

\begin{remark}
In examples \ref{ex:A22} and \ref{ex:A42}, $\E_{cc}^s(L)$ and $\E_{sp}^s(L)$ are diffeomorphic, since they are bifurcation sets of points $A_{2/2}$ and $A_{4/2}$, respectively. The same occurs in examples \ref{ex:D42}, \ref{ex:D82} and \ref{ex:E82}, but not in example \ref{ex:D62}.
\end{remark}

\subsection{Stable singularities on shell for the two canonical IAS obtained from a given Lagrangian curve or surface}

We now classify all $\RR^{odd}$-stable singularities that appear in the caustics $\E^s(L)$, $\tilde\E^s(L)$, when
$L$ is a planar curve or a Lagrangian surface in $\R^4$. In these dimensions, only simple singularities are stable, so we apply the previous results taking care of the possible codimensions.

\subsubsection{Lagrangian Curves} We follow section 4.1 of \cite{DMR}. Let $L$ be a germ at $0$ of a curve and assume that $L$
is generated by a function germ $S\in\M_1^3\subset\EE_1$.

\begin{proposition}\label{stablecurve}
	\
\begin{enumerate}
\item\label{cond1c} If $S^{(3)}(0)\neq 0$, $G_{cc}$ and $G_{sp}$ are $\RR^{odd}$-equivalent to the $\RR^{odd}$ versal deformation of  $A_{2/2}$.
\item\label{cond2c} If $S^{(3)}(0)=0$, $S^{(4)}(0)\neq 0$, $S^{(5)}(0)\neq 0$, $G_{cc}$ and $G_{sp}$ are $\RR^{odd}$-equivalent to the $\RR^{odd}$ versal
deformation of $A_{4/2}$.
\end{enumerate}
\end{proposition}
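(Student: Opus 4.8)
The plan is to compute the relevant generating families explicitly from the formulas (\ref{eq:GenFamCC}) and (\ref{eq:GenFamSP}) and then apply the $\RR^{odd}$-versality criterion of Proposition \ref{versal} (or, equivalently, the Corollary specialized to center-chord and special IAS on shell). Since $L$ is a germ of a curve generated by $S\in\M_1^3\subset\EE_1$, we may write $S(q)=\frac{1}{6}S^{(3)}(0)q^3+\frac{1}{24}S^{(4)}(0)q^4+\frac{1}{120}S^{(5)}(0)q^5+O(q^6)$, and correspondingly $G_{cc}(\beta,q,p)=\frac{1}{2}(S(q+\beta)-S(q-\beta))-p\beta$, $G_{sp}(\beta,q,p)=Q(q,\beta)-p\beta$ with $Q$ obtained from $H$ as in the Lemma on the relation between $g_{cc}$ and $g_{sp}$; in particular $G_{sp}$ is obtained from $G_{cc}$ by the substitution $\beta\mapsto i\beta$ up to the factor $-i$, which only changes signs of the odd-power coefficients and hence does not affect any of the $\RR^{odd}$-classification arguments. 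So it suffices to treat $G_{cc}$.

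For part (\ref{cond1c}), I would extract $G_0(\beta)=G_{cc}(\beta,0,0)$. With $S^{(3)}(0)\neq 0$ the leading term is $\frac{1}{6}S^{(3)}(0)\cdot 2\beta^3\sim\beta^3$, so $G_0$ has an $A_{2/2}$ singularity (it is $\RR^{odd}$-equivalent to $\beta^3$ by the $\RR^{odd}$ finite-determinacy result recalled in the excerpt; any higher odd-order terms are killed by an odd reparametrization of $\beta$). Then I would check the infinitesimal $\RR^{odd}$-versality condition (\ref{inf-versal}): here $n=1$, $m=2$, $\EE_1^{even}\{\beta\,\partial_\beta G_0\}=\EE_1^{even}\{\beta\cdot 3\beta^2+\cdots\}=\M_1^{3(odd)}$, and $\partial_p G_{cc}|_0=-\beta$, which supplies the generator $\beta$ of $\EE_1^{odd}/\M_1^{3(odd)}$ as a real vector space. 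Hence $\EE_1^{odd}=\M_1^{3(odd)}+\R\{\beta\}$, so the condition is met and $G_{cc}$ is an $\RR^{odd}$-versal (indeed miniversal, matching the codimension $1$ of $A_{2/2}$) deformation of $G_0$; by Theorem \ref{oddstablegen2} it is $\RR^{odd}$-stable, hence $\RR^{odd}$-equivalent to the displayed miniversal deformation of $A_{2/2}$ (the $\RR^{odd}$-versality classification being unique up to equivalence). The same computation with the sign change gives the statement for $G_{sp}$.

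For part (\ref{cond2c}), with $S^{(3)}(0)=0$ the leading term of $G_0(\beta)$ becomes $\frac{1}{120}S^{(5)}(0)\cdot 2\beta^5\sim\beta^5$ (the $\beta^4$ term drops out by oddness, so $S^{(4)}(0)$ plays no role in $G_0$ itself — it only enters the $q$-dependence), and since $S^{(5)}(0)\neq 0$, $G_0$ has an $A_{4/2}$ singularity, equivalent to $\beta^5$. I would then verify (\ref{inf-versal}) for this case: $\beta\,\partial_\beta G_0\sim\beta^5$, and $\M_1^{even}\{\beta^5\}$ together with lower generators must fill $\EE_1^{odd}$ modulo the two parameter-derivatives $\partial_q G_{cc}|_0$ and $\partial_p G_{cc}|_0$. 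We have $\partial_p G_{cc}|_0=-\beta$, and $\partial_q G_{cc}|_0=\frac{1}{2}(S'(\beta)-S'(-\beta))|$ evaluated along $q=0$, whose leading term — using $S^{(4)}(0)\neq 0$ — is proportional to $\beta^3$. Thus the parameters contribute $\beta$ and $\beta^3$, and together with $\M_1^{4(even)}\{\beta\}=\M_1^{5(odd)}$ we get $\EE_1^{odd}=\M_1^{5(odd)}+\R\{\beta,\beta^3\}$, which is exactly the codimension $2$ versality condition for $A_{4/2}$. Hence $G_{cc}$ is $\RR^{odd}$-(mini)versal and therefore $\RR^{odd}$-equivalent to the displayed miniversal deformation of $A_{4/2}$; again the sign change handles $G_{sp}$.

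The main obstacle I anticipate is the bookkeeping in the $A_{4/2}$ case: one must be careful that the condition $S^{(4)}(0)\neq 0$ is precisely what is needed so that $\partial_q G|_0$ contributes a nonzero $\beta^3$ term (and hence, together with $\beta$ from $\partial_p G|_0$, spans the $2$-dimensional quotient), whereas $S^{(4)}(0)=0$ would force an $A_{6/2}$ or higher singularity and a failure of versality in only two parameters. A secondary technical point is the passage from ``satisfies the infinitesimal versality identity'' to ``$\RR^{odd}$-equivalent to the standard miniversal unfolding'': this is where one invokes, in order, Proposition \ref{versal} (infinitesimal $\RR^{odd}$-versality $\Leftrightarrow$ $\RR^{odd}$-versality), finite determinacy of $G_0$ in the $\RR^{odd}$ category to identify its type as $A_{2/2}$ or $A_{4/2}$, and uniqueness of the miniversal deformation up to $\RR^{odd}$-equivalence — all of which are quoted in the excerpt, so no new machinery is required.
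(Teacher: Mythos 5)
Your proposal is correct and follows essentially the same route as the paper's own (much terser) proof: identify $G_0(\beta)=\frac{1}{2}\left(S(\beta)-S(-\beta)\right)$ as an $A_{2/2}$ (resp.\ $A_{4/2}$) germ and verify the infinitesimal $\RR^{odd}$-versality criterion of Proposition \ref{versal}, with $\partial_p G|_0=-\beta$ and $\partial_q G|_0\sim\frac{1}{6}S^{(4)}(0)\beta^3$ supplying exactly the generators needed, which is the content the paper compresses into ``it is easy to see that they are in fact versal deformations.'' The only nit is your parenthetical claim that $G_{cc}$ is miniversal in case (\ref{cond1c}): with two parameters $(q,p)$ against codimension $1$ it is versal but not miniversal, though this does not affect the argument since versal deformations with the same number of parameters are equivalent.
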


\begin{proof}
If $S^{(3)}\neq 0$, then $f^{(3)}\neq 0$ and $g^{(3)}\neq 0$. Thus $G_{cc}$ and $G_{sp}$ are odd deformations of an $A_{2/2}$-singularity, and it is easy to see
that they are in fact versal deformations of $G_0$. Thus we have proved item \ref{cond1c}.

For item \ref{cond2c}, observe that the hypothesis imply that $F$ and $G$ are odd deformations of an $A_{4/2}$-singularity. It is also easy to verify
that this deformation is versal, thus proving the result.
\end{proof}

The geometric interpretation of condition \ref{cond1c} is that the curvature of $L$ does not vanish, while the geometric interpretation
of condition \ref{cond2c} is that the curvature vanishes, but its first and second derivatives do not.

\begin{corollary}\label{EL=L}
If $L$ is strongly convex, then $\E_{cc}^s(L)=\E_{sp}^s(L)=L$.
\end{corollary}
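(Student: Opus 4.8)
The plan is to deduce Corollary \ref{EL=L} directly from Proposition \ref{stablecurve} together with the description of the caustics via generating families. First I would recall what "strongly convex" means for a germ of a planar curve $L$ at $0$: in the graph presentation $L=\{(q,dS(q))\}$, strong convexity is exactly the condition $S''(q)\neq 0$ throughout a neighborhood of $0$, and in particular $S^{(2)}(0)\neq 0$. Since $S\in\M_1^3$ we have $S(0)=S'(0)=S''(0)=0$ after the normalization used for generating functions, so strong convexity cannot hold at $0$ for such an $S$; the resolution is that the relevant generating data is the third-order jet. Concretely, for a curve germ the on-shell generating function is $g_{cc}(q,\dot q)=\tfrac12(S(q+\dot q)-S(q-\dot q))$, whose lowest-order term in $\dot q$ is $S''(q)\dot q$; thus the coefficient of $\dot q$ in $g$ is $S''(q)$, and strong convexity of $L$ says precisely that this coefficient is a non-vanishing function of $q$. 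Equivalently, writing things in terms of the curvature as in the geometric interpretation following Proposition \ref{stablecurve}, strong convexity means the curvature of $L$ never vanishes, so in particular $S^{(3)}(0)\neq 0$ is \emph{not} forced — rather, one gets that the only on-shell singularities are the trivial ones.

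Next I would argue that under strong convexity the only point of $L$ that can lie in $\E_{cc}^s(L)$ or $\E_{sp}^s(L)$ beyond $L$ itself is excluded. The caustic $\E^s(L)$ is, by Theorem \ref{precise}, the set of critical values of the Lagrangian map determined by $G_{cc}^s$ (resp. $G_{sp}^s$), and the singular set is cut out in $(\beta,q,p)$-space by $\partial G/\partial\beta=0$ together with $\partial^2 G/\partial\beta^2=0$. For a curve, $G_{cc}(\beta,q,p)=\tfrac12(S(q+\beta)-S(q-\beta))-p\beta$, so $\partial^2 G_{cc}/\partial\beta^2=\tfrac12(S''(q+\beta)-S''(q-\beta))$; the locus $\beta=0$ always solves this, giving $L$, and any other solution has $\beta\neq 0$ with $S''(q+\beta)=S''(q-\beta)$. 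I would then invoke the analysis of Proposition \ref{stablecurve}: at a point where $S^{(3)}(0)\neq 0$ the germ is that of $A_{2/2}$, whose bifurcation set (the on-shell caustic) is exactly $L$ — the $A_{2/2}$ caustic in one dimension is a single smooth point/curve, namely the image of $\beta=0$. Since strong convexity gives $S''\neq 0$ everywhere on the relevant neighborhood, every point of $L$ is either an $A_{2/2}$ point (if $S^{(3)}\neq 0$ there) whose on-shell caustic germ is just $L$, or — if $S^{(3)}$ vanishes somewhere while $S''$ stays nonzero — one still needs to check no extra branch appears; but the extra-branch equation $S''(q+\beta)=S''(q-\beta)$ with $S''$ nowhere zero and $S''$ having a definite sign forces, by strict monotonicity arguments when $S^{(3)}\neq 0$ and by a more careful local analysis at degenerate curvature points, that no small $\beta\neq 0$ solution exists. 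The cleanest route is: strong convexity $\Rightarrow$ the Wigner caustic germ $\E_{cc}(L)$ at each point of $L$ has \emph{no} off-shell part touching $L$ and the on-shell part reduces to $L$, because for a locally convex arc any chord with non-transversal tangents at its endpoints must be degenerate (the endpoints coincide).

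The main obstacle I anticipate is handling the special IAS case $\E_{sp}^s(L)$ cleanly, and handling points of $L$ where the curvature vanishes to high order (so that Proposition \ref{stablecurve} does not literally apply because the singularity is not simple or not of the listed types). For $\E_{sp}^s(L)$ one uses $G_{sp}(\beta,q,p)=Q(q,\beta)-p\beta$ with $\partial^2 G_{sp}/\partial\beta^2=\partial^2 Q/\partial\beta^2(q,\beta)$; since $Q$ is harmonic and odd in $\beta$, $\partial^2_\beta Q=-\partial^2_s Q$, and strong convexity of $L$ translates (via $x(s,t)=(s,\partial Q/\partial t)$, so $\partial^2 Q/\partial t^2$ controls the singular set and $\partial^2_s(\partial Q/\partial t)$ at $t=0$ is the second fundamental form of $L$) into $\partial^2 Q/\partial s\partial t(s,0)\neq 0$; one then shows $\partial^2_\beta Q(q,\beta)$ vanishes only at $\beta=0$ for small $\beta$, again by the sign/monotonicity of the relevant Taylor coefficient. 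I would present the argument uniformly using the relation $g_{cc}(q,\dot q)=-i\,g_{sp}(q,i\dot q)$ from the Lemma, so that a statement about the $\dot q$-expansion of $g_{cc}$ transfers verbatim to $g_{sp}$; strong convexity is the statement that the coefficient $b_1(q)$ of $\dot q$ (which equals $S''(q)$, the curvature up to a positive factor) is non-vanishing, and from $G=g-p\beta$ one reads off that the only critical-value locus near $L$ is $\{\beta=0\}\mapsto\{p=dS(q)\}=L$. So the proof is: reduce to the generating-family description, compute $\partial_\beta G$ and $\partial^2_\beta G$, observe that $\beta=0$ gives $L$, and show strong convexity kills all $\beta\neq 0$ solutions for $|\beta|$ small, hence $\E^s_{cc}(L)=\E^s_{sp}(L)=L$.
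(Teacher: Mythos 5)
There is a genuine gap, and it sits at the very first step: the identification of what ``strongly convex'' means for the Lagrangian curve $L$. Here $L$ is the graph of $dS$, i.e.\ the planar curve $q\mapsto(q,S'(q))$, whose curvature is $S'''(q)/(1+S''(q)^2)^{3/2}$; strong convexity is therefore precisely the condition $S^{(3)}(q)\neq 0$ on a neighbourhood of $0$ --- not $S''\neq 0$, which is what you assert (and which, as you yourself notice, is incompatible with the normalization $S\in\M_1^3$). Your attempted resolution, that the curvature never vanishes but ``$S^{(3)}(0)\neq 0$ is not forced'', is self-contradictory: the geometric gloss following Proposition \ref{stablecurve} states that condition (1), namely $S^{(3)}(0)\neq 0$, \emph{is} the non-vanishing of the curvature of $L$. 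The same confusion resurfaces at the end, where you claim that strong convexity is the non-vanishing of the coefficient $b_1(q)$ of $\dot q$ in $g_{cc}$ and that $b_1=S''$; in fact $b_1(q)=S'(q)$ (its only role is to locate $L$ via $p=S'(q)$ on the branch $\beta=0$), while the coefficient governing the on-shell caustic is $b_3(q)=S'''(q)/6$. Because of this, your argument never closes: you are left invoking ``a more careful local analysis at degenerate curvature points,'' which do not exist under the correct reading of the hypothesis, and the monotonicity you need is attributed to the wrong quantity.

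Once the hypothesis is read correctly the corollary is immediate, and your computational skeleton does work. Either route suffices: (i) strong convexity means condition (1) of Proposition \ref{stablecurve} holds at \emph{every} point of $L$, so every on-shell germ is the $\RR^{odd}$-versal deformation of $A_{2/2}$, whose caustic is the single smooth branch $\{\beta=0\}$, i.e.\ $L$ itself; or (ii) directly, $\partial^2 G_{cc}/\partial\beta^2=\tfrac12\bigl(S''(q+\beta)-S''(q-\beta)\bigr)$ vanishes only at $\beta=0$ because $S'''\neq0$ makes $S''$ strictly monotone, and $\partial^2 G_{sp}/\partial\beta^2=\partial^2Q/\partial t^2(q,\beta)=-S'''(q)\,\beta+O(\beta^3)$ (by harmonicity and oddness of $Q$ in $t$, or equivalently by the lemma relating $g_{cc}$ and $g_{sp}$ through $b_3$) likewise vanishes only at $\beta=0$ for $\beta$ small. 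In both cases the singular set is exactly $\{\beta=0\}$, whose image under $(\beta,q,p)\mapsto(q,p)$ subject to $\partial G/\partial\beta=0$ is $\{p=S'(q)\}=L$.
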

\begin{remark}
	Example \ref{ex:TorusIn} (cf. Figure 1) and Example \ref{ex:TorusOut} (cf. Figure 2) are particular nongeneric illustrations of the above corollary, while Example \ref{ex:A22}  illustrate the generic case, locally.  The generic case when condition $2$ of Proposition \ref{stablecurve} is satisfied is illustrated by Example \ref{ex:A42} (cf. Figure \ref{fig:Caustics}).
\end{remark}

\subsubsection{Lagrangian Surfaces} We follow section 4.2 of \cite{DMR}. Let $L$ be a germ at $0$ of a Lagrangian surface and assume that $L$
is generated by a function germ $S\in\M_2^3\subset\EE_2$.


\begin{notation}
Denote
$$
S_{i,j}=\frac{\partial^{i+j}S}{\partial q_1^i\partial q_2^j}(0,0),
$$
with
$$
j_0^3S=\frac{1}{6}S_{3,0}q_1^3+\frac{1}{2}S_{2,1}q_1^2q_2+\frac{1}{2}S_{1,2}q_1q_2^2+\frac{1}{6}S_{0,3}q_2^3
$$
denoting the $3$-jet of $S$ at $0$.  The discriminant of $j_0^3S$ is
\begin{equation*}
\Delta(j_0^3S)=\frac{1}{48}\left( 3S_{1,2}^2S_{2,1}^2-4S_{0,3}S_{2,1}^3-4S_{1,2}^3S_{3,0}-S_{0,3}^2S_{3,0}^2+6S_{0,3}S_{1,2}S_{2,1}S_{3,0}    \right).
\end{equation*}
\end{notation}

\begin{proposition}\label{first}
Assume $\Delta(j_0^3S)\neq 0$.
\begin{enumerate}
\item If $\Delta(j_0^3S)>0$, $G_{cc}$ and $G_{sp}$ are $\RR^{odd}$-equivalent to the $\RR^{odd}$ versal deformation of $D_{4/2}^{-}$.

\item If $\Delta(j_0^3S)<0$, $G_{cc}$ and $G_{sp}$ are $\RR^{odd}$-equivalent to the $\RR^{odd}$ versal deformation of $D_{4/2}^{+}$.
\end{enumerate}
\end{proposition}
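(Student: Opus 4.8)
\textbf{Proof plan for Proposition \ref{first}.}
The plan is to reduce the statement to the classification results already established: by Theorem \ref{oddstablegen2} (together with Corollary \ref{mc} and the last Corollary of Section 4, stated via \cite{DMR}), it suffices to show that, under the hypothesis $\Delta(j_0^3S)\neq 0$, the generating families $G_{cc}$ and $G_{sp}$ are $\RR^{odd}$-versal deformations of a $G_0$ whose $\RR^{odd}$-type is $D_{4/2}^{-}$ when $\Delta>0$ and $D_{4/2}^{+}$ when $\Delta<0$. So the proof splits into two parts: (i) identifying the $\RR^{odd}$-singularity type of $G_0=G_{cc}(\cdot,0)$ and of $G_{sp}(\cdot,0)$; and (ii) checking $\RR^{odd}$-versality of the two-parameter family $p\mapsto G(\cdot,q=0,p)$ — actually the full $4$-parameter family in $(q,p)$, but versality only needs enough of the $\partial G/\partial\lambda_l$.

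First I would compute the $3$-jet in $\beta$ of the generating functions. From (\ref{eq:GenFamCC}) and the relation $g_{cc}(s,\beta)=\sum_j b_j(s)\beta^j$ derived in the Lemma of Section 5.2, the cubic part in $\beta$ of $g_{cc}(0,\beta)$ is exactly $j_0^3 S$ evaluated on $\beta$, i.e. $\tfrac16 S_{3,0}\beta_1^3+\tfrac12 S_{2,1}\beta_1^2\beta_2+\tfrac12 S_{1,2}\beta_1\beta_2^2+\tfrac16 S_{0,3}\beta_2^3$ (the linear-in-$\beta$ part of $G_{cc}$ is absorbed by the unfolding parameters $p$, and there is no quadratic part since $G$ is odd). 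For $G_{sp}$, by (\ref{eq:RelationCCSP}) the cubic part is the same binary cubic with the pure substitution $\beta_2\mapsto$ (sign-twisted) coordinates, so it has the \emph{same} discriminant up to a positive factor. Now a real binary cubic form in two variables is, after a linear change of coordinates in $\beta$ (which is an odd fibred diffeomorphism, hence admissible), equivalent to $\beta_1^2\beta_2-\beta_2^3$ when its discriminant is positive (three distinct real roots) and to $\beta_1^2\beta_2+\beta_2^3$ when its discriminant is negative (one real root) — this is the standard normal form, and the sign convention matches $D_{4/2}^{\mp}$ as listed in Section 5.1. Thus $G_0$ has an $\RR^{odd}$-singularity of type $D_{4/2}^{-}$ if $\Delta(j_0^3S)>0$ and $D_{4/2}^{+}$ if $\Delta(j_0^3S)<0$; here I would invoke that $D_{4/2}^{\pm}$ has $\RR^{odd}$-codimension $\le 4$, so it is among the simple, stable germs.

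Second, I would verify $\RR^{odd}$-versality using Proposition \ref{versal}, i.e. check
$$
\EE_2^{odd}=\EE_2^{even}\left\{\beta_j\frac{\partial G_0}{\partial\beta_i},\ i,j=1,2\right\}+\R\left\{\frac{\partial G}{\partial\lambda_l}\Big|_{\R^2\times\{0\}},\ l\right\}.
$$
For the normal form $G_0=\beta_1^2\beta_2\pm\beta_2^3$ one has $\partial G_0/\partial\beta_1=2\beta_1\beta_2$, $\partial G_0/\partial\beta_2=\beta_1^2\pm3\beta_2^2$, so the $\EE_2^{even}$-module they generate, intersected with $\EE_2^{odd}$, has exactly $\beta_1,\beta_2$ as the missing generators modulo $\M_2^{5(odd)}$ (this is precisely the computation behind the miniversal deformation of $D_{4/2}^{\pm}$ recorded in Section 5.1: one needs to add $\lambda_1 t_1+\lambda_2 t_2$). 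Then it suffices to observe that $G_{cc}$ and $G_{sp}$ contain the monomials $-p_1\beta_1$ and $-p_2\beta_2$ with independent coefficients, so $\partial G/\partial p_1|_0=-\beta_1$ and $\partial G/\partial p_2|_0=-\beta_2$ supply exactly those two missing generators; adding the (superfluous for versality but harmless) $q$-derivatives does not hurt. By Proposition \ref{versal} this makes $G_{cc}$ and $G_{sp}$ $\RR^{odd}$-versal, hence by Theorem \ref{oddstablegen2} and Corollary \ref{mc} they are $\RR^{odd}$-stable and $\RR^{odd}$-equivalent to the stated versal deformations of $D_{4/2}^{\mp}$.

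\textbf{Main obstacle.} The genuinely delicate point is step (i): one must be sure that the binary cubic normalization is done by a \emph{linear} (hence odd, hence $\RR^{odd}$-admissible) change in the $\beta$ variables, and that the higher-order terms in $\beta$ of $g_{cc},g_{sp}$ and the $q$-dependent terms are irrelevant — this is exactly finite $\RR^{odd}$-determinacy of $D_{4/2}^{\pm}$, which follows from the geometric-subgroup machinery of Damon invoked after Theorem \ref{oddstablegen2}. The matching of the sign of $\Delta(j_0^3S)$ with the superscript $\pm$ in $D_{4/2}^{\pm}$ must be tracked carefully (and, for $G_{sp}$, through the twist in (\ref{eq:RelationCCSP})), since an overall sign or a coordinate permutation can swap $D_{4/2}^{+}$ with $D_{4/2}^{-}$; the discriminant formula $\Delta(j_0^3S)$ was written precisely so that this bookkeeping comes out as stated. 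Everything else is the routine module computation indicated above, which I would only sketch.
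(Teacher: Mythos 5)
Your proposal is correct and follows essentially the same route as the paper's own proof: normalize the binary cubic $j_0^3g$ by a linear (hence odd) change of the $\beta$-coordinates to $\beta_1^2\beta_2\mp\beta_2^3$ according to the sign of $\Delta(j_0^3S)$, then check $\RR^{odd}$-versality, with the terms $-p_1\beta_1-p_2\beta_2$ supplying the missing generators. The paper states these two steps more tersely (and offers \cite[Theorem 4.11]{DMR} as an alternative for $G_{cc}$), but your filled-in details — including the sign bookkeeping through (\ref{eq:RelationCCSP}) — are consistent with it.
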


\begin{proof}
Assume $\Delta(j_0^3S)>0$. Then, by a linear change of coordinates, we can write $j_0^3g=\beta_1^2\beta_2-\beta_2^3$. Thus $g$
is $\RR^{odd}$-equivalent to a $D_{4/2}^{-}$ singularity, and it is easy to see that $G_{sp}$ is an $\RR^{odd}$ versal deformation of $g$. This proves the first assertion for $G_{sp}$, the second one being similar. The proofs for $G_{cc}$ are similar or else one can invoke theorem 4.11 of \cite{DMR}.
\end{proof}


\begin{notation}
	Denote
	$$
	\delta_1=S_{3,0}S_{1,2}-S_{2,1}^2;\ \ \ \delta_2=S_{0,3}S_{2,1}-S_{1,2}^2.
	$$
	$$
	r_1=\frac{S_{2,1}S_{1,2}-S_{3,0}S_{0,3}}{2(S_{3,0}S_{1,2}-S_{2,1}^2)};\ \ r_2=\frac{S_{3,0}^2S_{0,3}-4S_{3,0}S_{2,1}S_{1,2}+3S_{2,1}^3}{S_{3,0}S_{1,2}-S_{2,1}^2}
	$$
	$$
	\sigma_{0,n}=\frac{\sum_{k=0}^n \binom{n}{k} S_{k,n-k}r_1^k}{(S_{3,0}r_1-r_2)^n},\ \ n=5,7.
	$$
	$$
	\tilde{r}_1=\frac{S_{2,1}S_{1,2}-S_{3,0}S_{0,3}}{2(S_{0,3}S_{2,1}-S_{1,2}^2)};\ \ \tilde{r}_2=\frac{S_{0,3}^2S_{3,0}-4S_{0,3}S_{2,1}S_{1,2}+3S_{1,2}^3}{S_{0,3}S_{2,1}-S_{1,2}^2}
	$$
	$$
	\sigma_{n,0}=\frac{\sum_{k=0}^n \binom{n}{k} S_{k,n-k}\tilde{r}_1^k}{(S_{0,3}\tilde{r}_1-\tilde{r}_2)^n},\ \ n=5,7.
	$$
	\end{notation}

\begin{lemma}
If $\Delta(j_0^3S)=0$, then $\delta_i\leq 0$, $i=1,2$.
\end{lemma}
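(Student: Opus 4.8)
The plan is to recognize $\delta_1$ and $\delta_2$ as the coefficients of $q_1^2$ and $q_2^2$ in the \emph{Hessian covariant} of the binary cubic form $j_0^3S$, and then to exploit the fact that this covariant is negative semidefinite precisely on the discriminant hypersurface.

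Set $a=S_{3,0}$, $b=S_{2,1}$, $c=S_{1,2}$, $d=S_{0,3}$, so that $6\,j_0^3S=C(q_1,q_2)$ with $C(x,y)=ax^3+3bx^2y+3cxy^2+dy^3$. First I would compute the Hessian determinant $\det D^2C$, which is a binary quadratic form, obtaining
$$\det D^2C=36\bigl(\delta_1 x^2+(ad-bc)\,xy+\delta_2 y^2\bigr)=:36\,h(x,y),$$
with $\delta_1=ac-b^2$ and $\delta_2=bd-c^2$ exactly as in the statement. A short direct computation then yields the classical identity
$$\mathrm{disc}(h)=(ad-bc)^2-4\delta_1\delta_2=-48\,\Delta(j_0^3S),$$
which matches the displayed formula for $\Delta(j_0^3S)$. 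Hence $\Delta(j_0^3S)=0$ is equivalent to $h$ having vanishing discriminant, i.e. to $h$ being semidefinite (possibly zero).

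It remains to fix the sign. For this I would use that the Hessian is a genuine $GL_2(\R)$-covariant: under a real linear substitution $(x,y)\mapsto(x,y)A$ the cubic $C$ is carried to some $\tilde C$ while $h$ is carried to $(\det A)^2\,\tilde h$, so \emph{negative semidefiniteness of $h$ is a coordinate-independent property}, even though $\delta_1$ and $\delta_2$ individually are not. Since $C$ has odd degree it possesses a real linear factor, so after a suitable real linear substitution we may assume $C=y\,Q(x,y)$, i.e. $a=0$. Then $\delta_1=-b^2$ and $ad-bc=-bc$, and the vanishing of $\mathrm{disc}(h)=b^2(4bd-3c^2)$ forces either $b=0$, whence $h=-c^2y^2$, or $4bd=3c^2$, whence $bd-c^2=-c^2/4$ and $h=-b^2x^2-bc\,xy-\tfrac14 c^2y^2=-\bigl(bx+\tfrac12 c\,y\bigr)^2$. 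In either case $h\le 0$, so $h$ is negative semidefinite in these coordinates, hence in the original ones as well.

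Therefore $\delta_1=h(1,0)\le 0$ and $\delta_2=h(0,1)\le 0$, as claimed. The only genuine subtlety is the covariance argument of the previous paragraph, which legitimizes testing the sign of the Hessian after a convenient real linear change of variables even though $\delta_1$ and $\delta_2$ are not individually coordinate-invariant; everything else is routine algebra. (Alternatively one could avoid the normal-form step by proving outright that the Hessian covariant of a real binary cubic is never positive definite, but the reduction to $a=0$ is the quickest route.)
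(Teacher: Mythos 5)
Your proof is correct. Note first that the paper itself states this lemma with no proof at all (the surrounding propositions all defer to the corresponding results of the reference [DMR]), so there is no in-paper argument to compare against; what you have written is a valid, self-contained justification. I checked the key computations: with $C=6\,j_0^3S$ the Hessian determinant is indeed $36\bigl(\delta_1x^2+(ad-bc)xy+\delta_2y^2\bigr)$ with $\delta_1=ac-b^2$, $\delta_2=bd-c^2$, and expanding $(ad-bc)^2-4\delta_1\delta_2$ gives $a^2d^2-6abcd-3b^2c^2+4ac^3+4b^3d$, which is exactly $-48\,\Delta(j_0^3S)$ for the discriminant as displayed in the paper. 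The covariance step is also sound: if $\tilde C(u)=C(uA)$ then $\tilde h(u)=(\det A)^2h(uA)$, so global nonpositivity of $h$ is invariant under invertible real linear substitutions even though $\delta_1,\delta_2$ individually are not, and the reduction to $a=0$ followed by the two cases $b=0$ and $4bd=3c^2$ correctly exhibits $h$ as $-c^2y^2$ or $-\bigl(bx+\tfrac12cy\bigr)^2$. The only point you should add for completeness is the degenerate case $j_0^3S\equiv 0$, where $C$ has no linear factor to normalize; there $\delta_1=\delta_2=0$ and the conclusion is immediate, so this costs one sentence.
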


\begin{proposition}\label{second}
Assume $\Delta(j_0^3S)=0$.
\begin{enumerate}
\item If $\delta_1\cdot\sigma_{0,5} <0$ or $\delta_2\cdot\sigma_{5,0}<0$, $G_{cc}$ is $\RR^{odd}$-equivalent to the $\RR^{odd}$ versal deformation of  $D_{6/2}^{+}$, while $G$ is $\RR^{odd}$-equivalent to the $\RR^{odd}$ versal deformation of  $D_{6/2}^{-}$.

\item If $\delta_1\cdot\sigma_{0,5}>0$ or $\delta_2\cdot\sigma_{5,0}>0$, $F$ is $\RR^{odd}$-equivalent to the $\RR^{odd}$ versal deformation $D_{6/2}^{-}$,
while $G_{sp}$ is $\RR^{odd}$-equivalent to the $\RR^{odd}$ versal deformation of  $D_{6/2}^{+}$.
\end{enumerate}
\end{proposition}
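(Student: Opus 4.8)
The plan is to reduce the statement to a normal-form computation for the generating function $g$ restricted to its $5$-jet, exactly as was done for Proposition \ref{first}, but now carrying the analysis one degree higher because $\Delta(j_0^3S)=0$ forces the $3$-jet of $g$ to be a perfect cube (up to sign). First I would recall from the previous lemma that $\delta_i\leq 0$, so that after a linear change of coordinates in the $\beta$-variables we may assume $j_0^3 g$ has the form $\beta_1^2\beta_2$ (the degenerate case of $D_4$); the two sign branches $D_{4/2}^{\pm}$ have collapsed and one must look at the degree-$5$ terms to resolve the singularity type. The quantities $\delta_1,\delta_2$ record which of the two natural linear changes (diagonalizing via the $q_1$-ruling or the $q_2$-ruling of the cubic) is nondegenerate, and $\sigma_{0,5}$, $\sigma_{5,0}$ are precisely the coefficients of the surviving quintic monomial $\beta_2^5$ (resp. $\beta_1^5$) after that change has been performed and the $A_k$-type variable has been brought to standard form. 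So the core computation is: plug $S=\tfrac16 S_{3,0}q_1^3+\cdots$ (up to order $5$) into the formulas $g_{cc}(q,\beta)=\tfrac12(S(q+\beta)-S(q-\beta))$ and $g_{sp}(q,\beta)=Q(q,\beta)$, extract the $5$-jet in $\beta$ at $q=0$, perform the linear change dictated by $\delta_1$ (or $\delta_2$, whichever is nonzero — the lemma on $\Delta=0$ guarantees at least one is), and identify the resulting normal form as $\beta_1^2\beta_2\pm\beta_2^5$, i.e.\ $D_{6/2}^{\pm}$, with the sign read off from the sign of the coefficient, which is $\delta_1\sigma_{0,5}$ (resp.\ $\delta_2\sigma_{5,0}$).

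The key point distinguishing $g_{cc}$ from $g_{sp}$ is the relation established in the preceding Lemma, $g_{cc}(q,\dot q)=-i\,g_{sp}(q,i\dot q)$: in terms of Taylor coefficients, the degree-$j$ part in $\dot q$ of $g_{sp}$ is multiplied by $(-1)^{\lfloor j/2\rfloor}$ relative to that of $g_{cc}$. For $j=3$ this factor is $-1$ and for $j=5$ it is $+1$, so passing from $g_{cc}$ to $g_{sp}$ flips the sign of the cubic part but \emph{not} the quintic part. Consequently, after normalizing the cubic part of $g_{sp}$ back to $+\beta_1^2\beta_2$ one must absorb a sign $-1$ into the $\beta$-coordinates, and an odd number of sign flips among the variables changes the sign of the odd-degree quintic term $\beta_2^5$. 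This is exactly why the $D_{6/2}$-type for $g_{sp}$ is the opposite ($D_{6/2}^{\mp}$) of that for $g_{cc}$ ($D_{6/2}^{\pm}$) — matching the phenomenon already observed in Example \ref{ex:D62}. I would therefore run the computation once, carefully, for $g_{cc}$, read off that its type is $D_{6/2}^{+}$ precisely when $\delta_1\sigma_{0,5}<0$ or $\delta_2\sigma_{5,0}<0$, and then invoke the Lemma to flip the sign for $g_{sp}$.

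The remaining step is versality: one must check that the full generating families $G_{cc}(\beta,q,p)=g_{cc}(q,\beta)-p\cdot\beta$ and $G_{sp}(\beta,q,p)=g_{sp}(q,\beta)-p\cdot\beta$ are $\RR^{odd}$-versal deformations of the respective $D_{6/2}^{\pm}$ germs. By Proposition \ref{versal}, this amounts to verifying the infinitesimal identity $\EE_2^{odd}=\EE_2^{even}\{\beta_j\partial g_0/\partial\beta_i\}+\R\{\partial g/\partial q_l|_{\R^2\times 0},\,-\beta_1,\,-\beta_2\}$ — i.e.\ that the $-p\cdot\beta$ term supplies the two missing linear directions $\beta_1,\beta_2$ and the $q$-derivatives of $g$ supply the remaining generators $\beta_1^2\beta_2$ (or equivalently $\beta_2^3$) needed to span the $5$-dimensional quotient for $D_{6/2}$; this is a finite-dimensional linear-algebra check over $\R$ once the $5$-jet of $S$ is put in the above normal form, and it is where the nonvanishing hypotheses $\delta_i\neq 0$ and $\sigma\neq 0$ are used a second time. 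The main obstacle I anticipate is purely bookkeeping: tracking the linear change of coordinates (whose entries are the rational expressions $r_1,r_2,\tilde r_1,\tilde r_2$) consistently through both the cubic and quintic parts, and confirming that the quintic coefficient that emerges is exactly $\sigma_{0,5}$ (resp.\ $\sigma_{5,0}$) up to a positive factor — the sign, not the magnitude, is all that matters, but getting the sign right through a degree-$5$ substitution composed with a linear change is the delicate part. One then also checks, as in \cite[Theorem 4.11]{DMR}, that no case outside the listed ones can occur when $\Delta(j_0^3S)=0$ and the $\delta_i,\sigma$'s are nonzero, which completes the classification.
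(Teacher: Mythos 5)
Your proposal is correct and follows essentially the approach the paper intends: the paper's own proof is just the one-line citation ``Similar to theorem 4.14 of \cite{DMR}'', i.e.\ the normal-form reduction of the $5$-jet to $\beta_1^2\beta_2\pm\beta_2^5$ carried out in \cite{DMR} for the center-chord case, combined with the coefficient relation $g_{cc}(q,\dot q)=-i\,g_{sp}(q,i\dot q)$ (set up in the preceding Lemma precisely for this purpose) to flip the cubic but not the quintic part and hence exchange $D_{6/2}^{+}$ with $D_{6/2}^{-}$ for $G_{sp}$. Your reconstruction — including the observation that only the $\beta_2^5$ coefficient survives modulo the Jacobian ideal of $\beta_1^2\beta_2$, and the separate versality check via Proposition \ref{versal} — supplies exactly the details the paper delegates to the reference.
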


\begin{proof}
Similar to theorem 4.14 of \cite{DMR}.
\end{proof}

\begin{proposition}
Assume $\Delta(j_0^3S)=0$.
\begin{enumerate}
\item If $\delta_1<0$, $\sigma_{0,5}=0$ and $\sigma_{0,7}>0$ or $\delta_2<0$, $\sigma_{5,0}=0$ and $\sigma_{7,0}>0$, $G_{cc}$ and $G_{sp}$ are $\RR^{odd}$-equivalent to the $\RR^{odd}$ versal deformation of $D_{8/2}^{+}$.

\item If $\delta_1<0$, $\sigma_{0,5}=0$ and $\sigma_{0,7}<0$ or $\delta_2<0$, $\sigma_{5,0}=0$ and $\sigma_{7,0}<0$, $G_{cc}$ and $G_{sp}$ are $\RR^{odd}$-equivalent to the $\RR^{odd}$ versal deformation of $D_{8/2}^{-}$.
\end{enumerate}
\end{proposition}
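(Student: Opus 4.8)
The plan is to reduce the statement to the classification of $\RR^{odd}$-versal deformations of $D_{8/2}^{\pm}$-singularities, following the pattern established in Propositions \ref{first} and \ref{second}. First I would reduce to the $3$-jet: since $\Delta(j_0^3S)=0$ but, by the accompanying hypotheses, the relevant higher-order invariants $\sigma_{0,5}$ (resp. $\sigma_{5,0}$) vanish while $\sigma_{0,7}$ (resp. $\sigma_{7,0}$) does not, the germ $g$ of the generating function has a $D_{8/2}$-singularity at $0$; the two cases $\delta_1<0$ or $\delta_2<0$ correspond to which of the two ``bad'' directions of the cubic $j_0^3 S$ one normalizes along, and by a linear change of coordinates in $(\beta_1,\beta_2)$ one brings $j_0^3 g$ to the normal form $\beta_1^2\beta_2$ (the cubic becoming degenerate along exactly one line because $\Delta=0$, and the quartic/quintic terms along that line encoding the remaining modulus).

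Next I would compute the sign of the $D_{8/2}$-invariant. Recall that $D_{8/2}^{\pm}$ has normal form $t_1^2 t_2 \pm t_2^7 + t_1 t_2^3$ (cf. Example \ref{ex:D82} and the list in subsection 5.1), so after bringing $j_0^3g$ to $\beta_1^2\beta_2$ one must check that the coefficient of the appropriate leading term along the distinguished line $\beta_1=0$ has the sign dictated by $\sigma_{0,7}$ (resp. $\sigma_{7,0}$): positive gives $D_{8/2}^+$, negative gives $D_{8/2}^-$. Here the scaling factors $(S_{3,0}r_1-r_2)^n$ in the denominators of the $\sigma_{0,n}$ are positive powers chosen precisely so that the sign of $\sigma_{0,7}$ is diffeomorphism-invariant, so the hypotheses $\sigma_{0,7}>0$ and $\sigma_{0,7}<0$ separate the two cases. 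The computation for $G_{sp}$ versus $G_{cc}$ is governed by Lemma 5.?? (the relation $g_{cc}(q,\dot q)=-i\,g_{sp}(q,i\dot q)$): because the $D_{8/2}^{\pm}$ normal form has odd degree $2k+1=15$ along $t_2$, the substitution $\dot q\mapsto i\dot q$ multiplies the leading $t_2^7$-coefficient by $i^{14}=-1$ and the $t_1t_2^3$-coefficient by $i^4=1$, but one checks that the overall effect preserves the $\pm$ class of $D_{8/2}$ (unlike $D_{6/2}$, where the degree parity caused the flip seen in Proposition \ref{second}); this is why the statement asserts the \emph{same} $D_{8/2}^{\pm}$ for both $G_{cc}$ and $G_{sp}$.

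Finally, having identified the singularity type of $g=G_0$, I would verify $\RR^{odd}$-versality of the deformation $G$ by checking the infinitesimal condition of Proposition \ref{versal}, i.e.\ that $\EE_2^{odd}=\EE_2^{even}\{\beta_j\partial_{\beta_i}G_0\}+\R\{\partial_{\lambda_l}G|_{\R^2\times\{0\}}\}$, or equivalently the $\M_2^{3(odd)}$-version in the Corollary to Proposition \ref{versal}. Since $D_{8/2}^{\pm}$ has $\RR^{odd}$-codimension $6$ and the parameters $(q,p)\in\R^4$ supply only $4$ of the needed directions, one must verify that the particular higher-order terms appearing in $G_{cc}$ and $G_{sp}$ (coming from the Taylor coefficients of $S$ beyond order $3$) span the remaining $2$-dimensional complement in the local algebra; this is exactly the kind of computation carried out in \cite{DMR}, Theorem 4.??, and I would invoke that result rather than redo it.

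\textbf{Main obstacle.} The delicate point is not the versality check (which is routine modular algebra once the normal form is fixed) but the invariant-theoretic bookkeeping: one must show that the hypothesis $\sigma_{0,7}>0$ (resp.\ $<0$), phrased in terms of the chosen auxiliary quantities $r_1,r_2,\tilde r_1,\tilde r_2$, really is equivalent to the sign of the $\RR^{odd}$-invariant distinguishing $D_{8/2}^+$ from $D_{8/2}^-$, independently of the linear normalization used — and simultaneously that the two disjunction branches ($\delta_1<0,\ldots$ versus $\delta_2<0,\ldots$) are consistent, i.e.\ cannot yield conflicting signs. Tracking this through the substitution $g_{cc}\leftrightarrow g_{sp}$ to confirm the absence of a $\pm$-flip (in contrast to the $D_{6/2}$ case) is the part that requires care.
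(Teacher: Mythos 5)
Your overall route is the same as the paper's: the paper's entire proof is the single line ``Similar to theorem 4.15 of \cite{DMR}'', i.e.\ it defers the $G_{cc}$ case to that reference and treats $G_{sp}$ by the same method, exactly as you propose. Your analysis of why the $\pm$-class does \emph{not} flip between $G_{cc}$ and $G_{sp}$ here (unlike the $D_{6/2}$ case of Proposition \ref{second}) is correct in substance: under $g_{cc}(q,\dot q)=-i\,g_{sp}(q,i\dot q)$ both the cubic $t_1^2t_2$ (degree $3$) and the distinguishing term $t_2^7$ (degree $7$) acquire a factor $-1$, so $g_{sp}\sim -(t_1^2t_2\pm t_2^7)$, and the odd substitution $t_2\mapsto -t_2$ restores $t_1^2t_2\pm t_2^7$ with the \emph{same} sign; for $D_{6/2}$ only the cubic flips, whence the $\mp$.

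There is, however, a concrete error in your versality step. In the odd category $D_{8/2}^{\pm}$ has normal form $t_1^2t_2\pm t_2^{2k-1}$ with $k=4$, i.e.\ $t_1^2t_2\pm t_2^{7}$ (not degree $2k+1=15$), and its miniversal deformation listed in subsection 5.1 has exactly $k=4$ parameters: $\lambda_1t_1+\lambda_2t_2+\lambda_3t_2^3+\lambda_4t_2^5$. So the $\RR^{odd}$-codimension is $4$, not $6$; the paper explicitly includes $D_{8/2}^{\pm}$ among the singularities of codimension $\leq 4$. Consequently there is no ``remaining $2$-dimensional complement'' to be spanned by higher-order terms of $S$: the four parameters $(q_1,q_2,p_1,p_2)$ must supply \emph{all} four directions, with $\partial G/\partial p_i$ giving $t_1,t_2$ and $\partial G/\partial q_1,\partial G/\partial q_2$ (evaluated at $q=p=0$) required to produce $t_2^3$ and $t_2^5$ --- this is where the higher Taylor coefficients of $S$ (e.g.\ the $q_1q_2^3$ and $q_2^6$ terms in Example \ref{ex:D82}) enter, and where the genericity hypotheses are used. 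As written, a codimension-$6$ singularity could not be versally unfolded by the $4$-parameter family $G(\cdot,q,p)$ at all, so the proposition would be vacuous under your count; correcting the codimension to $4$ both repairs the logic and makes the versality check a determinate $4\times 4$ spanning condition rather than the indeterminate bookkeeping you describe.
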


\begin{proof}
Similar to theorem 4.15 of \cite{DMR}.
\end{proof}

\begin{proposition}\label{last}
Assume $\Delta(j_0^3S)=0$.
If
$$
\delta_1=0,\ \ S_{3,0}\neq 0, \sum_{k=0}^5\binom{5}{k}S_{k,5-k}(-S_{2,1})^k(S_{3,0})^{5-k}\neq 0,
$$
or
$$
\delta_2=0,\ \ S_{0,3}\neq 0, \sum_{k=0}^5\binom{5}{k}S_{k,5-k}(-S_{1,2})^k(S_{0,3})^{5-k}\neq 0,
$$
then $G_{cc}$ and $G_{sp}$ are $\RR^{odd}$-equivalent to the $\RR^{odd}$ versal deformation of $E_{8/2}$.
\end{proposition}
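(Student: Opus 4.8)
The plan is to reduce the statement to a classification of the $3$-jet and $5$-jet of the generating function $g$ (either $g_{cc}$ or $g_{sp}$), exactly as in the $D_{6/2}$ and $D_{8/2}$ cases handled in Propositions \ref{second} and the one preceding. First I would use the Lemma relating $g_{cc}$ and $g_{sp}$ via $g_{cc}(q,\dot q)=-ig_{sp}(q,i\dot q)$ to observe that the two cases are formally identical up to signs on the odd-degree coefficients; since $E_{8/2}$ has normal form $t_1^3+t_2^5$ (with no sign modulus), the sign changes are irrelevant and it suffices to treat one of them, say $g_{cc}$, with the other following verbatim. So write $g = g_{cc}$, whose $j$-th degree part in $\dot q$ has coefficient $b_j(q)$ built from the Taylor coefficients of $S$ as in the Lemma; in particular $j_0^3 g(\,\cdot\,,\beta) = j_0^3 S$ evaluated on $(\,\cdot\,)=q+\beta$ minus $(\,\cdot\,)=q-\beta$, so the pure-$\beta$ cubic part of $g$ at $q=0$ is $\tfrac14\big(S_{3,0}\beta_1^3 + 3S_{2,1}\beta_1^2\beta_2 + 3S_{1,2}\beta_1\beta_2^2 + S_{0,3}\beta_2^3\big)$, i.e. the cubic $j_0^3S$ up to the substitution $q\mapsto\beta$.

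Next I would exploit the hypothesis. The condition $\Delta(j_0^3S)=0$ means the cubic form $j_0^3S$ has a repeated linear factor, and the additional condition $\delta_1 = S_{3,0}S_{1,2}-S_{2,1}^2 = 0$ together with $S_{3,0}\neq 0$ pins down which degenerate type occurs: the cubic is (up to a linear change of coordinates in the $\beta$-variables, which is odd and hence $\mathcal R^{odd}$-admissible) of the form $c\,\beta_1^3$, i.e. it has a \emph{triple} linear factor rather than a double one. (If instead one is in the symmetric situation $\delta_2=0$, $S_{0,3}\neq 0$, swap the roles of $\beta_1,\beta_2$.) So after a linear normalization $j_0^3 g = \beta_1^3$, and the $D$-series (which requires $j_0^3g \sim \beta_1^2\beta_2 \pm \beta_2^{2k-1}$) is excluded; the relevant simple germ with this $3$-jet is $E_{8/2}: \beta_1^3 + \beta_2^5$. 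Then I would look at the $5$-jet: after killing, by $\mathcal R^{odd}$-changes of variable absorbing $\beta_1$-divisible terms into $\beta_1^3$, the residual invariant is precisely the $\beta_2^5$ coefficient of $g$ at $q=0$ restricted to the line $\beta_1=0$ after straightening, which by the formula for $b_5$ in the Lemma is (a nonzero multiple of) $\sum_{k=0}^5\binom{5}{k}S_{k,5-k}(-S_{2,1})^k(S_{3,0})^{5-k}$ — this is the given nonvanishing quantity. Its non-vanishing guarantees that the $\beta_2^5$ term is genuinely present and cannot be removed, so $g$ is $\mathcal R^{odd}$-equivalent to $E_{8/2}$. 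Finally, since $E_{8/2}$ has $\mathcal R^{odd}$-codimension $4$ and the IAS generating family $G$ carries $2n=4$ parameters $(q,p) = (q_1,q_2,p_1,p_2)$, I would check — by the same direct verification as in Examples \ref{ex:E82} and the proof of Proposition \ref{first} — that $G$ is an $\mathcal R^{odd}$-versal deformation of $g$, using the infinitesimal criterion (\ref{inf-versal}) of Proposition \ref{versal} with the module generators $\partial g/\partial q_l|_{\beta\text{-space}}$, $l=1,2$, completing the spanning of $\mathcal M_2^{3(odd)}$ modulo $\mathcal E_2^{even}\{\beta_j\partial g_0/\partial\beta_i\}$.

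The main obstacle I expect is the bookkeeping in the reduction to the $5$-jet normal form: one must verify that all the $\mathcal R^{odd}$-coordinate changes used to eliminate the unwanted monomials (everything divisible by $\beta_1$ beyond $\beta_1^3$, and the lower-order cross terms) stay within the odd category and do not reintroduce, or fail to control, the $\beta_2^5$ coefficient, so that the stated polynomial expression in the $S_{k,5-k}$ is exactly the $\mathcal R^{odd}$-invariant detecting $E_{8/2}$ versus the next stratum. This is where one genuinely needs the explicit change-of-variable computation, but it is structurally the same computation as in Theorems 4.14–4.15 of \cite{DMR}, so I would phrase the proof as \textquotedblleft similar to the proof of [the preceding propositions], now using the $E_{8/2}$ normal form,\textquotedblright{} and reference those for the detailed algebra. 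The versality check in the last step is routine given the dimension count ($\mathrm{codim}\,E_{8/2}=4 = 2n$) and the explicit generators coming from $\partial_{q_l} g$.
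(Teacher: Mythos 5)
Your proposal is correct and follows essentially the same route as the paper, whose entire proof of this proposition is the single line ``Similar to theorem 4.16 of \cite{DMR}'': the reduction of the cubic $j_0^3S$ to a triple linear factor under $\Delta(j_0^3S)=\delta_1=0$, $S_{3,0}\neq 0$, the identification of the stated quintic sum as the degree-five coefficient of $S$ along the repeated-root direction $(-S_{2,1},S_{3,0})$, and the versality check via the infinitesimal criterion of Proposition \ref{versal} are exactly the content of that cited argument. The only slip is the harmless constant $\tfrac14$ in place of $\tfrac16$ in the cubic part of $g$ at $q=0$, which does not affect the classification.
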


\begin{proof}
Similar to theorem 4.16 of \cite{DMR}.
\end{proof}

\begin{remark}
	For detailed geometric interpretations of all the conditions of propositions \ref{first}-\ref{last}, we refer to section 4.3 of \cite{DMR}. Here, we just point out that conditions of Proposition \ref{first} are realized for hyperbolic and elliptic points of $L$, the higher singularities of propositions \ref{second}-\ref{last} occurring for parabolic points of $L$. In particular, the local equivalent of Corollary \ref{EL=L} is realized for hyperbolic points of $L$, that is, if $L'$ is the germ of $L$ at a generic hyperbolic point of $L$, then $\E_{cc}^s(L')=\E_{sp}^s(L')=L'$  (cf. \cite[Corollary 4.19]{DMR}).
\end{remark}

\section{Proof of Theorem \ref{equivalence}}\label{App}

We now prove Theorem \ref{equivalence}, which relates the definition of equivalence of fibred-$\mathbb Z_2$-symmetric Lagrangian, resp. Legendrian, map-germs (cf. Definition \ref{d3}) to the definition of fibred $\mathcal R^{odd}$-equivalence of their odd generating families (cf. Definition \ref{foe}). 	We prove this theorem by modifying the method used in \cite[Section 19.5]{Arnold} to the case of $\mathbb Z_2$-symmetric Lagrangian equivalence.
	
	First, assume that odd generating families $F_1$ and $F_2$ are  fibred  $\mathcal R^{odd}$-equivalent, cf. Definition \ref{foe}. 
	Then, the fibred diffeomorphism $(\Psi^{-1})^{\ast}$  of the big phase space $T^{\ast} \mathbb R^{n+m}$ determines  a Lagrangian equivalence of the big phase space between Lagrangian sections of $ T^{\ast} \mathbb R^{n+m}$ generated by the function-germs $F_1$ and $F_2$ on the big space. Both Lagrangian sections are $\rho$-regular. Since the diffeomorphism-germ $\Psi$ of $\mathbb R^{n+m}$ is fibred, the Lagrangian equivalence of the big phase space induces a Lagrangian equivalence of the small phase space $T^{\ast}\mathbb R^m$ (see \cite{Arnold}, Section 19.4) between germs of Lagrangian submanifolds generated by the odd families $F_1$ and $F_2$. 
	It is easy to check that this Lagrangian equivalence of the small phase space $T^{\ast}\mathbb R^m$ is determined by the diffeomorphism-germ $(\Lambda^{-1})^{\ast}$. But   $(\Lambda^{-1})^{\ast}$ is a linear map in the fibers of $T^{\ast}\mathbb R^{m}$ , hence is odd in the fibers. Thus, the Lagrangian map-germs generated by odd families $F_1$ and $F_2$ are  $\mathbb Z_2$-symmetrically Lagrangian equivalent.
	
	Now, assume that we are given a $\mathbb Z_2$-symmetric Lagrangian equivalence of the small phase space, mapping the germ of a fibred-$\mathbb Z_2$-symmetric Lagrangian submanifold  $L_1$, determined by an odd generating family $F_1$, to the germ of a fibred-$\mathbb Z_2$-symmetric Lagrangian submanifold $L_2$. By Proposition \ref{odd-base},  the $\mathbb Z_2$-symmetric Lagrangian equivalence is determined by $(\phi)^{\ast}$, where $\phi$ is a diffeomorphism-germ of the base. Let us consider a diffeomorphism-germ $(Id_{ \mathbb R^n}, \phi):\mathbb R^{n}\times \mathbb R^m\rightarrow
	\mathbb R^{n}\times \mathbb R^m$ of the big space. This induces a Lagrangian equivalence $(Id_{ \mathbb R^n}, \phi)^{\ast}$ of the big phase space, mapping the germ of the Lagrangian section $\mathcal L_1$ generated by  function-germ $F_1$ to the germ of the Lagrangian section $\mathcal L_2$.  Then, it is easy to see that $\mathcal L_2$ is generated by a function germ $F_2$ of the form $F_2(\beta,\lambda)=F_1(\beta,\phi^{-1}(\lambda))$. It implies that $F_2$ is an odd generating family of $L_2$ which is fibered  $\mathcal R^{odd}$-equivalent to $F_1$.

	To finish the proof of Theorem \ref{equivalence} we need the following lemmas. But first some preparations. 
	
	By Remark \ref{pathological}, every fibred-$\mathbb Z_2$-symmetric Lagrangian germ admits a generating function-germ $S=S(\kappa_J,\lambda_I)$  which is odd in $\kappa_J$, with the minimal number of pathological arguments $\kappa_J$. This number is equal to  the dimension of the kernel of the differential of the Lagrangian map-germ. We fix the set $\kappa_J$ of $n$ pathological arguments. By Remark \ref{map_by_F}, we obtain that the Lagrangian map-germ is given in terms of $F$ by $\Sigma(F)\ni (\beta,\lambda)\mapsto \lambda\in R^m$.

	A vector $\eta$ is tangent to $\Sigma(F)$ at $(0,0)$ if $d(\frac{\partial F}{\partial \beta})|_{(0,0)}(\eta)=0$ and $\eta$ is in the kernel of the Lagrangian germ if  $d\lambda|_{(0,0)}(\eta)=0$. This implies that $d\beta(\eta)$ is in the kernel of the map $\frac{\partial^2 F}{\partial \beta^2}(0,0):\mathbb R^n\rightarrow \mathbb R^n$. But since $F$ is odd, $\frac{\partial^2 F}{\partial \beta^2}(-\beta,\lambda)\equiv -\frac{\partial^2 F}{\partial \beta^2}(\beta,\lambda)$, thus $	\frac{\partial^2 F}{\partial \beta^2}(0,\lambda)\equiv 0$ and therefore 
	the image of the kernel of the Lagrangian map under the linear map $d\beta:\mathbb R^{n+m}\rightarrow \mathbb R^n$  is the whole space $\mathbb R^n$.  But $\kappa_J$ are fixed $n$ pathological arguments. Thus the image of  the kernel of the Lagrangian map under the linear map $d\kappa_J=d(\frac {\partial F}{\partial \lambda_J})(0,0):\mathbb R^{n+m}\rightarrow \mathbb R^m$ is $n$-dimensional. But we have $d\kappa_J(\eta)= \frac{\partial^2 F}{\partial \beta\partial \lambda_J}(0,0)d\beta(\eta)$, because $d\lambda_J(\eta)=0$.  Hence, if $F$ is odd then
	\begin{equation}\label{lambda_J}
	\det \frac{\partial^2 F}{\partial \beta\partial \lambda_J}(0,0)\ne 0.
	\end{equation}
	
	An odd generating family $F$ is called {\it special} if for $\frac{\partial F}{\partial \beta}(\beta,\lambda)=0$ the condition $\beta=\frac{\partial F}{\partial \lambda_J}(\beta,\lambda)$ is fulfilled.
	We then have the following lemmas.
	\begin{lemma}\label{to_special}
		The germ of an odd generating family is fibred $\mathcal R^{odd}$-equivalent to the germ of a special odd generating family of the same Lagrangian germ.
	\end{lemma}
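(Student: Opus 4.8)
The plan is to prove Lemma \ref{to_special} by an adaptation of the normalization argument in \cite[Section 19.5]{Arnold}, using the crucial nondegeneracy (\ref{lambda_J}) that odd generating families automatically satisfy. First I would start with an arbitrary odd generating family $F=F(\beta,\lambda)$ whose set of critical points $\Sigma(F)$ is $\{(\beta,\lambda):\partial F/\partial\beta=0\}$ and whose associated Lagrangian section of the big phase space is $\rho$-regular, with the $n$ pathological arguments $\kappa_J$ fixed. The goal is to produce a new generating family $\tilde F$, fibred $\mathcal R^{odd}$-equivalent to $F$ and generating the same Lagrangian germ, such that on $\Sigma(\tilde F)$ one has $\beta=\partial\tilde F/\partial\lambda_J$.

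The key step is to build the equivalence as a fibred change of the $\beta$-variables alone, i.e. $\Psi(\beta,\lambda)=(\Phi(\beta,\lambda),\lambda)$ with $\Phi(\cdot,\lambda)\in\mathcal D_n^{odd}$ for each $\lambda$, and set $\tilde F=F\circ\Psi$. Because $\Phi$ is odd in $\beta$, $\tilde F$ stays odd; and because $\Psi$ is fibred over the base, $\tilde F$ generates the same Lagrangian germ in the small phase space (as already used in the first half of the proof of Theorem \ref{equivalence}). I would look for $\Phi$ of the form $\beta\mapsto$ (a $\lambda$-dependent linear-in-$\beta$ correction) $+$ higher order, chosen so that the defining relation of "special" holds. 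Concretely, one wants, on the critical set, $\Phi(\beta,\lambda)=\partial F/\partial\lambda_J(\Phi(\beta,\lambda),\lambda)$; differentiating this along $\Sigma$ at the origin and using (\ref{lambda_J}), namely $\det\,\partial^2 F/\partial\beta\,\partial\lambda_J(0,0)\neq 0$, the Implicit Function Theorem solves for $\Phi$ as a smooth (and, by symmetrizing, odd) germ. This is exactly the point where oddness buys us the nondegeneracy we need: in the non-symmetric case one would have to permit the extra $dG$-term in the fibred symplectomorphism, but here Proposition \ref{odd-base} forbids it, and (\ref{lambda_J}) compensates.

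After constructing $\Phi$, I would verify that $\tilde F=F\circ\Psi$ is indeed special: plug the critical-point equation $\partial\tilde F/\partial\beta=0$ (equivalently $\partial F/\partial\beta=0$ at the transformed point, since $\partial\tilde F/\partial\beta=(\partial F/\partial\beta)\cdot(\partial\Phi/\partial\beta)$ and $\partial\Phi/\partial\beta$ is invertible) into the relation defining $\Phi$ and read off $\beta=\partial\tilde F/\partial\lambda_J$ using the chain rule together with $\partial\tilde F/\partial\lambda_J=\partial F/\partial\lambda_J\cdot(\mathrm{stuff})+\ldots$; the terms organize so that the identity holds on $\Sigma(\tilde F)$. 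One also checks that $\rho$-regularity and the minimal number of pathological arguments are preserved, which is immediate since $\Psi$ is fibred over the base and linear-invertible in $\beta$ to first order.

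I expect the main obstacle to be the bookkeeping in the chain-rule computation verifying specialness, together with ensuring the solution $\Phi$ of the implicit equation can genuinely be taken odd in $\beta$ (not merely smooth): here I would exploit the $\mathbb Z_2$-action, noting that if $\Phi(\beta,\lambda)$ solves the equation then so does $-\Phi(-\beta,\lambda)$ by the oddness of $F$ and of $\partial F/\partial\lambda_J$ in $\beta$ (the latter because $\kappa_J$ are pathological, so $F$ is odd in $\beta$ and hence $\partial F/\partial\lambda_J$ is odd in $\beta$), and by uniqueness in the Implicit Function Theorem these coincide, forcing $\Phi$ to be odd. The construction of $\Phi$ is essentially a parametrized Morse-type normal form for the $\beta$-dependence near $\Sigma$, so apart from this symmetry subtlety the argument is a direct transcription of Arnold's, with (\ref{lambda_J}) playing the role his genericity assumption plays in the non-symmetric setting.
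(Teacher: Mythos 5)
Your proposal follows essentially the same route as the paper, which in turn transcribes Arnold's Lemma 1 of Section 19.5: change only the $\beta$-variables by a fibred diffeomorphism that is odd in $\beta$, using the nondegeneracy (\ref{lambda_J}) that oddness of $F$ guarantees, and observe that oddness is preserved because $\partial F/\partial\lambda_J$ is odd in $\beta$. The paper simply takes $\Psi(\beta,\lambda)=\bigl(\frac{\partial F}{\partial\lambda_J}(\beta,\lambda),\lambda\bigr)$, notes it is a fibred diffeomorphism-germ odd in $\beta$ by (\ref{lambda_J}), and sets $F_1=F\circ\Psi^{-1}$; your $\Phi$ is the first component of $\Psi^{-1}$.

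There is, however, one slip at the central step that would break the argument if read literally: the defining relation you impose on $\Phi$ is written as $\Phi(\beta,\lambda)=\frac{\partial F}{\partial\lambda_J}(\Phi(\beta,\lambda),\lambda)$. This is a fixed-point equation whose local solution (when it exists) depends on $\lambda$ only, so it cannot define a diffeomorphism in $\beta$; moreover its linearization involves $I-\frac{\partial^2F}{\partial\beta\,\partial\lambda_J}(0,0)$, whose invertibility is not what (\ref{lambda_J}) provides. Since $\tilde F=F\circ\Psi$ gives, on the critical set, $\frac{\partial\tilde F}{\partial\lambda_J}(\beta,\lambda)=\frac{\partial F}{\partial\lambda_J}(\Phi(\beta,\lambda),\lambda)$ (the $\frac{\partial F}{\partial\beta}$ terms vanish there because $\partial\Phi/\partial\beta$ is invertible), the specialness condition you must arrange is $\beta=\frac{\partial F}{\partial\lambda_J}(\Phi(\beta,\lambda),\lambda)$, i.e. $\Phi(\cdot,\lambda)$ is the inverse of $\frac{\partial F}{\partial\lambda_J}(\cdot,\lambda)$; differentiating this correct relation gives $\frac{\partial\Phi}{\partial\beta}=\bigl(\frac{\partial^2F}{\partial\beta\,\partial\lambda_J}\bigr)^{-1}$, which is exactly where (\ref{lambda_J}) enters as you intended. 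With that correction your oddness argument (uniqueness in the inverse/implicit function theorem forces $\Phi(-\beta,\lambda)=-\Phi(\beta,\lambda)$) and your chain-rule verification of specialness go through and coincide with the paper's proof.
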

	\begin{proof}[Proof of Lemma \ref{to_special}]
		We follow the proof of Lemma 1 in Section 19.5 \cite{Arnold}. Since  $F$ is odd the condition (\ref{lambda_J}) is fulfilled. Hence the map-germ $\Psi(\beta,\lambda)\equiv ( \frac{\partial F}{\partial \lambda_J}(\beta,\lambda),\lambda)$ is a fibred diffeomorphism-germ of the big space. Since $F$ is odd in $\beta$, $\Psi$ is also odd in $\beta$. The germ $F$ is fibred $\mathcal R^{odd}$-equivalent to the germ of an odd generating family  $F_1(\beta,\lambda)\equiv F(\Psi^{-1}(\beta,\lambda))$.  It is easy to check that $F_1$ is special and it generates the same Lagrangian germ (see \cite{Arnold} for details).
	\end{proof}
	
	\begin{lemma}\label{special_equiv}
		The germs of special odd generating families, determining the same fibred-$\mathbb Z_2$-symmetric Lagrangian germ, are fibered $\mathcal R^{odd}$-equivalent.
	\end{lemma}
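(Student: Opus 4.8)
The plan is to follow the nonsymmetric argument in \cite[Section 19.5]{Arnold}, Lemma 2, while checking at each step that the diffeomorphisms constructed there can be taken odd in the fibre variables $\beta$. So let $F_1=F_1(\beta,\lambda)$ and $F_2=F_2(\beta,\lambda)$ be two \emph{special} odd generating families determining the same fibred-$\mathbb Z_2$-symmetric Lagrangian germ $L\subset (T^\ast M,\omega)$, with the same fixed set $\kappa_J$ of $n$ pathological arguments. First I would observe that, because both are special, on the critical set $\Sigma(F_i)$ we have $\beta=\frac{\partial F_i}{\partial\lambda_J}(\beta,\lambda)$, and the map $\lambda_I,\kappa_J\mapsto$ (the point of $L$) is a common parametrization; hence $F_1$ and $F_2$ restrict to the \emph{same} function on $\Sigma(F_1)\cong\Sigma(F_2)\cong$ (the common parametrizing $\mathbb R^m$), namely both equal (up to the identification) the odd generating function $S=S(\kappa_J,\lambda_I)$ of $L$. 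This is the starting point: $F_1$ and $F_2$ agree along their critical sets after the natural identification.

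Next I would write the homotopy $F_t=(1-t)F_1+tF_2$, $t\in[0,1]$, which is again odd in $\beta$ and, crucially, still \emph{special}: linearity in $F$ preserves both the oddness and the condition $\beta=\frac{\partial F_t}{\partial\lambda_J}$ on $\Sigma(F_t)$, and (\ref{lambda_J}) holds for each $F_t$ since it holds for $F_1$ and $F_2$ and the relevant determinant is, for special families, essentially normalized. Moreover each $F_t$ generates the same Lagrangian germ $L$ (because $F_1$, $F_2$ do and $L$ is recovered from the special data along the critical set, which is unchanged along the homotopy). Then I would apply the standard Moser-type / homotopy-method argument: seek a $t$-dependent fibred vector field $V_t(\beta,\lambda)=\big(v_t(\beta,\lambda),w_t(\lambda)\big)$ whose flow $\Psi_t$ is fibred and satisfies $F_t\circ\Psi_t=F_1$; differentiating gives the infinitesimal equation $\frac{\partial F_t}{\partial t}+dF_t(V_t)=0$, i.e. $F_2-F_1+\frac{\partial F_t}{\partial\beta}\cdot v_t+\frac{\partial F_t}{\partial\lambda}\cdot w_t=0$. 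The solvability of this is exactly where specialness is used: because $F_1-F_2$ vanishes on $\Sigma(F_t)$ to first order (it vanishes there together with its $\lambda_I$-derivatives, as both restrict to $S$), a Hadamard-type division lets one write $F_2-F_1$ as a combination of the components of $\frac{\partial F_t}{\partial\beta}$ with smooth coefficients, plus a term absorbed by $w_t$; this is the content of the corresponding step in \cite{Arnold}.

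The point that needs genuine care — and which I expect to be the main obstacle — is the \textbf{parity bookkeeping}: one must choose the solution $(v_t,w_t)$ of the infinitesimal equation so that $v_t(-\beta,\lambda)\equiv -v_t(\beta,\lambda)$ (odd) and $w_t=w_t(\lambda)$ is genuinely $\beta$-independent, so that the time-one flow $\Psi_1$ lies in the odd fibred group $\mathcal D_{n+m}$ of Definition \ref{foe}. Here I would exploit that $F_2-F_1$ is odd in $\beta$ while $\frac{\partial F_t}{\partial\beta}$ is even in $\beta$; hence in the division $F_2-F_1=\sum_i \big(\partial F_t/\partial\beta_i\big)\, v_t^i + \sum_l \big(\partial F_t/\partial\lambda_l\big)\,w_t^l$ one can, after averaging over the $\mathbb Z_2$-action $\beta\mapsto-\beta$, take the coefficients $v_t^i$ odd in $\beta$ and the $w_t^l$ even; and then, using that $F_t$ is \emph{special} (so the $\lambda_J$-part of the equation is already solved by $\beta=\partial F_t/\partial\lambda_J$), one arranges $w_t$ to depend on $\lambda$ only, exactly as in the nonsymmetric proof. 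Averaging is legitimate because the whole setup — the families, their critical sets, the determinant condition (\ref{lambda_J}) — is $\mathbb Z_2$-equivariant. Finally, integrating $V_t$ gives an odd fibred diffeomorphism-germ $\Psi=\Psi_1$ with $F_2\circ\Psi=F_1$, i.e. $F_1$ and $F_2$ are fibred $\mathcal R^{odd}$-equivalent; combined with Lemma \ref{to_special} and the two directions already proved, this completes the proof of Theorem \ref{equivalence}.

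\begin{proof}[Proof of Lemma \ref{special_equiv}]
We adapt the proof of Lemma 2 in Section 19.5 of \cite{Arnold}, keeping track of the $\mathbb Z_2$-symmetry. Let $F_1$, $F_2$ be germs of special odd generating families of the same fibred-$\mathbb Z_2$-symmetric Lagrangian germ, with the same fixed set $\kappa_J$ of $n$ pathological arguments. Since both are special, on $\Sigma(F_i)$ one has $\kappa_J=\beta=\frac{\partial F_i}{\partial\lambda_J}$, so $\Sigma(F_1)$ and $\Sigma(F_2)$ are parametrized by the same $(\kappa_J,\lambda_I)=:(\beta,\lambda_I)$, and along these critical sets both $F_1$ and $F_2$ coincide with the odd generating function $S=S(\kappa_J,\lambda_I)$ of the common Lagrangian germ; in particular $F_1-F_2$ vanishes on $\Sigma(F_t)$ together with its derivatives in the directions $\lambda_I$, for the straight-line homotopy $F_t=(1-t)F_1+tF_2$, $t\in[0,1]$.

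The family $F_t$ is odd in $\beta$, is again special, and by (\ref{lambda_J}) (which holds for $F_1$ and $F_2$, hence for $F_t$) it generates the same Lagrangian germ for every $t$. We look for a fibred vector field $V_t(\beta,\lambda)=(v_t(\beta,\lambda),w_t(\lambda))$, odd in $\beta$ in its first component and with $\beta$-independent second component, solving the homological equation
\begin{equation*}
F_2-F_1+\frac{\partial F_t}{\partial\beta}\cdot v_t+\frac{\partial F_t}{\partial\lambda}\cdot w_t=0 .
\end{equation*}
Because $F_2-F_1$ vanishes on $\Sigma(F_t)$ to the appropriate order, the Hadamard division argument of \cite{Arnold} writes $F_2-F_1$ as a combination of the components of $\partial F_t/\partial\beta$ (coefficients smooth in $(\beta,\lambda)$) plus a term that, using specialness, is absorbed into $\partial F_t/\partial\lambda_J\cdot(\ \cdot\ )$ with a $\beta$-independent coefficient; this produces a solution $(v_t,w_t)$. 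Since $F_2-F_1$ is odd in $\beta$ and $\partial F_t/\partial\beta$ is even in $\beta$, averaging the constructed solution over the involution $\beta\mapsto-\beta$ (which preserves all the data) yields $v_t$ odd in $\beta$ and $w_t$ even; specialness then lets us take $w_t=w_t(\lambda)$ independent of $\beta$.

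Integrating $V_t$ from $t=0$ to $t=1$ gives a fibred diffeomorphism-germ $\Psi=\Psi_1$ of $\mathbb R^{n+m}=\mathbb R^n\times\mathbb R^m$ which, by construction, is odd in $\beta$, i.e. $\Psi\in\mathcal D_{n+m}$ in the sense of Definition \ref{foe}, and which satisfies $F_2\circ\Psi=F_1$. Hence $F_1$ and $F_2$ are fibred $\mathcal R^{odd}$-equivalent, proving the lemma.
\end{proof}

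Combining Lemmas \ref{to_special} and \ref{special_equiv} with the two implications established before them completes the proof of Theorem \ref{equivalence}: any two odd generating families of $\mathbb Z_2$-symmetrically Lagrangian equivalent fibred-$\mathbb Z_2$-symmetric Lagrangian map-germs may first be brought to special form (Lemma \ref{to_special}), then, since they generate the same Lagrangian germ in suitable coordinates, are fibred $\mathcal R^{odd}$-equivalent (Lemma \ref{special_equiv}); the converse was shown directly.
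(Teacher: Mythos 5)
Your proposal is correct and follows essentially the same route as the paper: the straight-line homotopy $F_t$ between the two special odd families, a homological equation solved by Hadamard-type division using the vanishing of $F_1-F_2$ to second order on the common critical set $\Sigma$, and parity bookkeeping to ensure the time-one flow is odd in $\beta$. Two execution points are worth flagging. First, the paper's homological equation has \emph{no} base component: since (by Arnold, Section 19.5(D)(d)) the \emph{total} differential of $F_1-F_2$ vanishes on the whole of $\Sigma$ --- not merely the $\lambda_I$-derivatives, as you state --- one writes $F_1-F_2=\sum_i\xi_i\,\partial F_t/\partial\beta_i$ with $\xi_i|_{\Sigma}=0$ and no remainder, so your $w_t$ term, whose ``absorption into $\partial F_t/\partial\lambda_J\cdot(\,\cdot\,)$'' is the one step of your write-up that does not actually parse, can simply be taken to be zero; the vanishing of the $\xi_i$ on $\Sigma$ is also what makes the flow fix the origin and hence exist as a germ for all $t\in[0,1]$. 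Second, where you obtain oddness of the coefficients by averaging over $\beta\mapsto-\beta$ (legitimate, since $\partial F_t/\partial\beta$ is even and $\Sigma$ is $\mathbb Z_2$-invariant), the paper constructs the $\xi_i$ explicitly via an integral formula built from the diffeomorphism $(\beta,\lambda,t)\mapsto(\beta,\partial F_t/\partial\beta,\lambda_I,t)$, reads off their oddness directly, and deduces oddness of $\Psi_t$ from uniqueness of solutions of the flow ODE; likewise, where you assert $F_1|_{\Sigma}=F_2|_{\Sigma}$ outright, the paper gets equality up to an additive constant from Arnold and kills the constant using $F_1(0,0)=F_2(0,0)=0$, which follows from oddness. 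None of these differences amounts to a genuine gap.
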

	\begin{proof}[Proof of Lemma \ref{special_equiv}]
		From  \cite[Section 19.5 (D)(d)]{Arnold}, any two special generating families $F_0$, $F_1$ of the same Lagrangian germ have the same set of critical points $\Sigma(F_0)=\Sigma(F_1)=\Sigma$, the restrictions of $F_0$ and $F_1$ to $\Sigma$ coincide up to an additive constant and the total differential of $F_0-F_1$ is equals to $0$ on the whole of $\Sigma$.
		Let $F_0$ and $F_1$ be two special odd generating families of the same   fibred-$\mathbb Z_2$-symmetric Lagrangian germ. Since $F_0, F_1$ are odd, we have $F_0(0,0)=F_1(0,0)=0$. Thus $F_0-F_1$ has zero of not less than second order on $\Sigma$. We use the homotopy method. 
		Let $F_t=F_0+t(F_1-F_0)$ for $t\in [0,1]$. Then $F_t$ is a special odd generating family of the same Lagrangian germ. We shall find a family $\Psi_t(\beta,\lambda)\equiv(\Phi_t(\beta,\lambda),\lambda)$ of odd diffeomorphisms in $\beta$, smoothly depending on $t\in [0, 1]$, such that
		\begin{equation}\label{eq_t}
		F_t\circ \Psi_t=F_0, \ \ \Psi_0=Id_{\mathbb R^{n+m}}. 
		\end{equation}
		The diffeomorphism-germ $\Psi_1$ establishes fibred $\mathcal R^{odd}$-equivalence of $F_0$ and $F_1$.    Differentiating (\ref{eq_t}) with respect to $t$ we obtain the equation
		\begin{equation}
		F_1(\beta,\lambda)-F_0(\beta,\lambda)+\sum_{i=1}^n \xi_i(\beta, \lambda, t) \frac{\partial F_t}{\partial \beta_i}(\beta,\lambda).
		\end{equation}
	
		Let $\Theta:\mathbb R^{n+m}\times [0,1]\rightarrow \mathbb R^{n+m}\times [0,1] $ be the  map-germ $\Theta(\beta,\lambda_J,\lambda_I, t)\equiv (\beta,\frac{\partial F_t}{\partial \beta}(\beta,\lambda),\lambda_I,t)$. Since $F_t$ is odd, $\frac{\partial F_t}{\partial \beta}$ is even and (\ref{lambda_J}) holds. Hence, 
		$\Theta$ is a diffeomorphism-germ, $\Theta^{-1}(\beta, v, \lambda_I,t)\equiv (\beta, \gamma(\beta,v,\lambda_I,t),\lambda_I,t)$ and $\gamma(-\beta,v,\lambda_I,t)\equiv \gamma (\beta,v,\lambda_I,t)$.
		Let $H$ be the family of function-germs on $\mathbb R^{n+m}$
		$$
		H(\beta,v,\lambda_I,t)\equiv F_0(\beta, \gamma(\beta,v,\lambda_I, t),\lambda_I)-F_0(\beta, \gamma(\beta,v,\lambda_I,t),\lambda_I).
		$$
		Then $H$ is odd in $\beta$ and $H(\beta,\frac{\partial F_t}{\partial \beta}(\beta,\lambda),\lambda_I,t)\equiv F_0(\beta,\lambda)-F_1(\beta,\lambda)$. It implies that $H(\beta,0,\lambda_I,t)\equiv (F_0(\beta,\lambda)-F_1(\beta,\lambda))|_{\Sigma}\equiv 0$. Let $h(s)=H(\beta,sv,\lambda_I,t)$ for $s\in [0,1]$. Thus $h(1)-h(0)=\int_{0}^1 \frac{dh}{ds}(s)ds$. Hence
		$$
		H(\beta, v,\lambda_I,t)\equiv \sum_{i=1}^n v_i\int_0^1\frac{\partial H}{\partial v_i}(\beta,sv,\lambda_I,t)ds.
		$$
		If we put $v=\frac{\partial F_t}{\partial \beta}(\beta,\lambda)$ we get
		\begin{equation}\label{F_0-F_1}
		F_0(\beta,\lambda)-F_1(\beta,\lambda)\equiv \sum_{i=1}^n \xi_i(\beta,\lambda,t)\frac{\partial F_t}{\partial \beta_i}(\beta,\lambda),
		\end{equation}
		where $\xi_i(\beta,\lambda,t)\equiv\int_0^1\frac{\partial H}{\partial v_i}(\beta,s\frac{\partial F_t}{\partial \beta}(\beta,\lambda),\lambda_I,t)ds$ for $i=1,\cdots,n$. It is easy to see that $\xi_i(-\beta,\lambda,t)=-\xi_i(\beta,\lambda,t)$.
		Since the total differential of $F_0-F_1$ vanishes on the whole of $\Sigma=\Sigma(F_t)$, by (\ref{F_0-F_1}) we have  $\xi_i|_{\Sigma}=0$ for $i=1,\cdots,n$.
		
		Thus, the vector field $\xi(\beta,\lambda,t)=\sum_{i=1}^n \xi_i(\beta,\lambda,t) \frac{\partial}{\partial \beta_i}$ depending on $t$  takes value $0$ on $\Sigma$ and  is odd  in $\beta$.
		Hence $\xi$ induces a  diffeomorphism $\Psi_t$ in the neighborhood of $(0,0)$ for all $t\in [0,1]$, which satisfies the  ODE system
		\begin{equation}\label{ODE}
		\frac{d\Psi_t}{dt}=\xi(\Psi_t).
		\end{equation}
		
		From the form of $\xi(\beta,\lambda,t)$,  the diffeomorphism $\Psi_t$ has  form $\Psi_t(\beta,\lambda)\equiv (\Phi_t(\beta,\lambda),\lambda)$.
		The maps $y(t)\equiv \Psi(-\beta,\lambda)$ and $z(t)\equiv -\Psi_t(\beta,\lambda)$ satisfy the system (\ref{ODE}) with the same initial condition $y(0)=z(0)=(-\beta,\lambda)$. By the uniqueness of the solution of the initial value problem,  $\Psi(-\beta,\lambda)\equiv -\Psi(\beta,\lambda)$.  Hence $\Psi_t$ is odd in $\beta$.
		Thus, the fibered odd diffeomorphism-germ $\Psi_1$ satisfies $F_1\circ \Psi_1=F_0$. Consequently, $F_1$ and $F_0$ are fibered $\mathcal R^{odd}$-equivalent.
	\end{proof}
	Then,  by Lemmas \ref{to_special}-\ref{special_equiv} we obtain that any two odd generating families of the same fibred-$\mathbb Z_2$-symmetric Lagrangian submanifold-germ $L$ are fibred  $\mathcal R^{odd}$-equivalent, which finishes the proof of Theorem \ref{equivalence}.

\section*{Acknowledgements}

We thank M.A.S. Ruas for discussions.



\begin{thebibliography}{n}
	
	
	
	\bibitem{Galvez07}
	J.A. Aledo, R.M.B. Chaves, J.A. G\'alvez, {\it The Cauchy Problem for Improper Affine Spheres and the Hessian One Equation}.
	Trans. Amer. Math. Soc. 359 (2007) 4183-4208.
	
	\bibitem{Arnold} V.I. Arnold, S.M. Gusen-Zade, A.N. Varchenko, Singularities of Differentiable Maps, vol.1, Birkhauser, Boston, 1985.
	
	\bibitem{Cortes00}
	O. Baues, V. Cort\'es, {\it Realisation of special K\"ahler manifolds as parabolic spheres}, Proc. Amer. Math. Soc. 129 (2000) 2403-2407.
	
	\bibitem{Cortes02}
	V. Cort\'es, {\it A holomorphic representation formula for parabolic hyperspheres}, Banach Center Publications 57 (2002), 11-16.
	
	\bibitem{Cortes06}
	V. Cort\'es, M.A. Lawn, L. Sch\"afer, {\it Affine hyperspheres associated to special para-K\"ahler manifolds}, Int. J. of Geom. Meth. in Mod.Phys. 3 (2006).
	
	\bibitem{BKP} J.W. Bruce, N.P. Kirk, A.A. du Plessis, {\it Complete transversals and the classification of singularities}, Nonlinearity 10 (1997) 253-275.
	
	\bibitem{Craizer08}
	M. Craizer, M.A.H.B. da Silva, R.C. Teixeira, {\it Area distance of convex planar curves and improper affine spheres},
	SIAM J. Imaging Sci. 1 (2008) 209-227.
	
	
	
	\bibitem{CDR}
	M. Craizer, W. Domitrz, P. de M. Rios,  {\it Even Dimensional Improper Affine Spheres}, J. Math. Analysis and Applications 421 (2015) 1803-1826.
	
	\bibitem{Damon} J. Damon, {\it The unfolding and determinacy theorems for subgroups of $\mathcal A$ and $\mathcal K$}. Proc. Symposia in Pure Math. 40, Part 1, Amer. Math. Soc. (1983) 233-254.
	
	\bibitem{Damon2} J. Damon,  {\it Topological triviality in versal unfoldings}. Proc. Symposia in Pure Math. 40, Part 1, Amer. Math. Soc. (1983) 255-266.
	
	
	\bibitem{DMR} W. Domitrz, M. Manoel, P. de M. Rios,  {\it The Wigner caustic on shell and singularities of odd functions}, J. Geometry and Physics 71 (2013) 58-72.
	
	\bibitem{DR} W. Domitrz, P. de M. Rios,  {\it Singularities of equidistants and global centre symmetry sets of Lagrangian submanifolds}, Geom. Dedicata 169 (2014) 361-382.
	
	\bibitem {DZ1} W. Domitrz, M. Zwierzy\'nski, \emph{Singular points the Wigner caustic and affine equidistants of planar curves}, Bull Braz Math Soc, New Series
https://doi.org/10.1007/s00574-019-00141-4
	
\bibitem {DZ2} W. Domitrz, M. Zwierzy\'nski: \emph{The Gauss-Bonnet theorem for coherent tangent bundles over surfaces with boundary and its applications}, The Journal of Geometric Analysis
https://doi.org/10.1007/s12220-019-00197-0
	
	\bibitem {DZ3} W. Domitrz, M. Zwierzy\'nski, \emph{The geometry of the Wigner caustic and affine equidistants of planar curves}, arXiv:1605.05361v4.


	
	
	\bibitem{Freed}  D.S. Freed, {\it Special K\"ahler manifolds}, Commun. Math. Phys. 203 (1999) 31--52.
	
	\bibitem{GU} J. Grabowski, P. Urba\'nski, {\it Tangential lifts of Poisson and related structures}, J. Phys. A 28 (1995) 6743-6777.
	
	
	\bibitem{Izu15} S. Izumiya,  M.C. Romero Furster,  M.A.S. Ruas, F. Tari,  {\it Differential geometry from a singularity theory viewpoint}, World Scientific, Singapore, 2015.
	
	\bibitem{JR} S. Janeczko, M. Roberts, {\it Classification of symmetric caustics. I. Symplectic equivalence.} Singularity theory and its applications, Part II (Coventry, 1988/1989), 193–219, Lecture Notes in Math., 1463, Springer, Berlin, 1991.
	
	\bibitem{Mather} J. Mather,  {\it Stability of $C^{\infty}$ mappings, I-VI}. Ann. Math. 87 (1968) 89-104, 89  (1969) 254-291,  Publ. Sci. IHES 35  (1969) 127-156, 37 (1970) 223-248, Adv. Math. 4 (1970) 301-335, Lect. Notes Math. 192 (1971) 207-253.

       \bibitem{Martinet} J. Martinet, 
{\it Singularities of smooth functions and maps}.
 London Mathematical Society Lecture Note Series, 58. Cambridge University Press, Cambridge-New York, 1982.
	
	\bibitem{Milan13} F. Mil\'an,  {\it Singularities of improper affine maps and their Hessian equation}. J. Math. Analysis and Applications 405  (2013) 183-190.
	
	\bibitem{Milan14}
	F. Mil\'an,  {\it The Cauchy problem for indefinite improper affine spheres and their hessian equation}. Advances in Mathematics 251 (2014) 22-34.
	
	\bibitem{Giblin04}
	M. Niethammer, S.  Betelu,   G. Sapiro,  A. Tannenbaum,  P.G. Giblin,  {\it Area-Based Medial Axis of Planar Curves}, Int. J. Computer Vision 60 	(2004) 203--224.
	
	\bibitem{Tu} W.M. Tulczyjew, {\it Hamiltonian systems, Lagrangian systems, and the Legendre transformation}, Ann. Inst. H. Poincar\'e 27 (1977) 101--114.
	
	
\end{thebibliography}
\end{document}